\newtheorem{thm}{Theorem}[section]
\newtheorem{cor}[thm]{Corollary}
\newtheorem{lem}[thm]{Lemma}
\newtheorem{prop}[thm]{Proposition}
\theoremstyle{definition}
\newtheorem{defn}[thm]{Definition}
\newcommand{\cpx}[1]{#1^{\bullet}}
 \newcommand{\A}{\mathcal A}
\newcommand{\C}{\mathcal C}
\newcommand{\D}{\mathcal D}
\newcommand{\GF}{\mathsf {GF}}
\newcommand{\F}{\mathsf F}
\newcommand{\G}{\mathsf G}
\newcommand{\GP}{\mathcal {GP}}
\renewcommand{\H}{\mathcal H}
\newcommand{\I}{\mathcal I}
\newcommand{\MF}{\mathsf{MF}}
\renewcommand{\S}{\mathcal S}
\newcommand{\T}{\mathcal T}
\newcommand{\X}{\mathcal X}
\newcommand{\Y}{\mathcal Y}
\DeclareMathOperator{\Cok}{\mathsf{Cok}}
\DeclareMathOperator*{\Gproj}{\!-\mathsf{Gproj}}
\DeclareMathOperator*{\GProj}{\!-\mathsf{GProj}}
\DeclareMathOperator{\Hom}{\mathsf{Hom}}
 \DeclareMathOperator*{\id}{\mathsf{id}}
 \DeclareMathOperator*{\inc}{\mathsf{inc}}
 \DeclareMathOperator*{\Ker}{\mathsf{Ker}}
\DeclareMathOperator*{\Mod}{\!-\mathsf{Mod}}
\DeclareMathOperator*{\Mor}{\mathsf{Mor}}
\DeclareMathOperator*{\pr}{\mathsf{pr}}
\DeclareMathOperator*{\Proj}{\!-\mathsf{Proj}}
\title[]{$N$-fold module factorizations: triangle equivalences and recollements}
\dedicatory{Dedicated to Professor Bin Zhu on the occasion of his 60th birthday}
\author[Y. Sun & Y. Zhang]{Yongliang Sun and Yaohua Zhang*}
\address{\normalfont{Yongliang Sun \\School of Mathematics and Physics, Yancheng Institute of Technology, 224003 Jiangsu, People's Republic of China}}
\email{syl13536@126.com}
\address{\normalfont{Yaohua Zhang \\ Hubei Key Laboratory of Applied Mathematics, Faculty of Mathematics and Statistics, Hubei University, 430062 Wuhan, People's Republic of China}}
\email{yhzhang@hubu.edu.cn}
\thanks{* Corresponding author.}
\keywords{{ Matrix factorization}, $n$-fold module factorization, Gorenstein projective modules, triangle equivalence, recollement of triangulated categories}
\subjclass[2020]{16E65, 18G80, 18G65, 18G25}
\begin{document}
\begin{abstract}
As an extension of Eisenbud's matrix factorization into the non-commutative realm, X.W. Chen introduced the concept of module { factorizations} over an arbitrary ring. A theorem of Chen establishes a { triangle equivalences} between the stable category of module factorizations with Gorenstein projective components and the stable category of Gorenstein projective modules over a quotient ring. In this paper, we introduce $n$-fold module { factorizations}, which { generalize} both the commutative $n$-fold matrix factorizations and the non-commutative module factorizations. To adapt { triangle equivalences} in module factorizations to $n$-fold module factorizations, we identify suitable subcategories of module factorizations and rings for the $n$-analogue. We further provide the $n$-analogue of Chen's theorem on { triangle equivalences}. Additionally, we {  study} recollements involving the stable categories of higher-fold module factorizations, revealing intriguing recollements within the stable categories of Gorenstein modules of specific matrix subrings.
\end{abstract}
\maketitle
\section{Introduction}
The concept of matrix factorizations was originally introduced by Eisenbud \cite{E80} to study periodic free resolutions over hypersurface singularities. Since then, matrix factorizations have found applications in a wide range of mathematical fields, including tensor triangular geometry \cite{Y16}, commutative algebra \cite{E80}, knot theory \cite{KR08}, string theory \cite{C09}, Hodge theory \cite{BFK14}, singularity categories \cite{B21, O04, PV11}, and quiver representations \cite{KST07}.
  {This concept has been extended to the noncommutative setting by Cassidy, Conner, Kirkman, and Moore \cite{CCKM}, who introduced the notion of {twisted matrix factorizations}. More generally, Bergh and Erdmann \cite{BE19} further generalized this idea by replacing free modules with arbitrary modules, which leads to a definition that coincides with the notion of {module factorizations} introduced by Chen \cite{Chen24}. The noncommutative version also admits a variety of applications; see for instance \cite{CKMW, MU}.}

Motivated by these developments, this paper introduces the notion of $n$-fold module factorizations, which unify and generalize two important classes of objects: the (2-fold) module factorizations studied by Bergh and Erdmann \cite{BE19}, and independently by Chen \cite{Chen24}, as well as the $n$-fold matrix factorizations investigated by Hopkins and Tribone \cite{H21, T21}. The main objective of this work is to establish a connection between $n$-fold module factorizations with Gorenstein components and Gorenstein projective modules over a certain class of upper triangular matrix rings. This investigation naturally leads to the study of triangulated equivalences among specific stable categories associated with these constructions. In doing so, we extend the existing theory in two directions: first, by generalizing the commutative framework of $n$-fold matrix factorizations, and second, by extending the noncommutative setting from 2-fold to $n$-fold module factorizations.

Let $A$ be a left noetherian ring, and let $\omega$ be a regular and normal element in $A$. The element $\omega$ induces a unique ring automorphism $\sigma: A\to A$ such that $\omega a=\sigma(a)\omega$ for $a\in A$, along with the quotient ring $A/(\omega)$, denoted as $\bar{A}$. { Given an $A$-module $M$, }there is a canonical morphism $\omega_M:M\to {^\sigma}(M), m\mapsto {^\sigma}(\omega m)$, where ${^\sigma}(M)$ has the same elements as $M$ but with the twisted action given by $a {{^\sigma}(m)}={^\sigma}(\sigma(a)m)$ for all $a\in A$ and $m\in M$.

Let $n$ be a positive integer. An {\em $n$-fold module factorization} $(X^i,d_X^i)$ of $\omega$ is defined as a sequence
$$\xymatrix{
X^0\ar[r]^{d_X^{0}}&X^1\ar[r]^{d_X^{1}}&\cdots\ar[r]^{d_X^{n-2}}&X^{n-1}\ar[r]^{d_X^{n-1}}&{^\sigma(X^0)}
}$$
of $n$ morphisms in $A\Mod$, where each possible $n$-fold composition equals $\omega$ (see Subsection~\ref{subsec:n mod fac}).
Denote by $\F_n(A;\omega)$ the category of $n$-fold module factorizations of $\omega$. In particular, we identify $\F_1(A;\omega)$ as $A\Mod$. There are subcategories
\begin{align*}
   \GF_n(A;\omega):=&\{X\in\F_n(A;\omega)\mid X^i~\text{is Gorenstein-projective for}~0\leq i\leq n-1\} \\
   \G^0\F_n(A;\omega):=&\{X\in\GF_n(A;\omega)\mid X^{n-1}~\text{is projective}\} \\
   \MF_n(A;\omega):=&\{X\in\F_n(A;\omega)\mid X^{i}~\text{is finitely generated projective for}~0\leq i\leq n-1\}
\end{align*}

Define the following two matrix rings
$$
\quad\quad\Gamma_n=\begin{pmatrix}
       A& A&\cdots & A\\
       A\omega & A &\cdots &A\\
       \vdots &\vdots &\ddots &\vdots\\
       A\omega & A\omega &\cdots &A
   \end{pmatrix}_{n\times n}\quad\quad
\Lambda_{n-1}=\begin{pmatrix}
    \bar{A}&\bar{A}&\cdots &\bar{A}\\
    0 &\bar{A}&\cdots&\bar{A}\\
    \vdots&\vdots&\ddots&\vdots\\
    0&0&\cdots&\bar{A}
\end{pmatrix}_{(n-1)\times (n-1)}
$$

In Section~\ref{sec:module factorizations}, we realize the category $\F_n(A;\omega)$ of $n$-fold module factorizations of $\omega$ as the module category $\Gamma_n\Mod$. Consequently, the subcategory $\GP(\F_n(A;\omega))$, which consists of Gorenstein projective $n$-fold module factorizations, is identified as a Frobenius category. Specifically, we prove the following theorem, which generalizes \cite[Proposition 3.4, Proposition 3.11]{Chen24}. 

\begin{thm}{\textnormal{(Proposition~\ref{prop:module cat}, Proposition~\ref{prop:projs} and Corollary~\ref{cor:Gorenstein stable})}}
    The category $\F_n(A;\omega)$ is equivalent to $\Gamma_n\Mod$. The subcategory $\GF_n(A;\omega)$ coincides with $\GP(\F_n(A;\omega))$. Moreover, there is a triangle equivalence $\underline{\GF}_n(A;\omega)\simeq \Gamma_n\underline{\GProj}$.
\end{thm}

In Section~\ref{sec:cokernel}, based on the observation that an $n$-fold module factorization can be decomposed into a composition of $(n-1)$ morphisms of 2-fold module factorizations, we define the zeroth cokernel functor $\Cok^0$ that maps the category $\F_n(A;\omega)$ of $n$-fold module factorizations to the module category $\Lambda_{n-1}\Mod$. We establish the following triangle equivalences, which can be interpreted as a noncommutative $n$-fold version of Eisenbud's theorem (\cite{E80,H21}). In contrast to the the classical case $n=2$, we characterize the category of Gorenstein projective modules over an upper-triangular matrix ring using monomorphism categories, as developed by X.-H. Luo and P. Zhang \cite{LZ13}. When $n=2$, the theorem below aligns with \cite[Theorem 5.6, Theorem 7.2]{Chen24}.

\begin{thm}{\textnormal{(Theorem~\ref{thm:cok equiv 1} and Theorem~\ref{thm:matrix factor})}}
    The zeroth cokernel functor $\Cok^0$ induces triangle equivalences   $$\Cok^0:\underline{\G^0\F}_n(A;\omega)\stackrel{\sim}\longrightarrow \Lambda_{n-1}\underline{\GProj}$$
    and
    $$\underline{\MF}_n(A;\omega)\stackrel{\sim}\longrightarrow \Lambda_{n-1}\underline{\Gproj}^{<+\infty}.$$
\end{thm}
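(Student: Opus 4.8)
The plan is to establish both equivalences in parallel by first analyzing the structure of $\Cok^0$ more carefully, since the second equivalence should be the restriction of a suitable version of the first to the finitely generated, finite-projective-dimension situation. First I would recall the decomposition hinted at in Section~\ref{sec:cokernel}: an $n$-fold module factorization $X^0\to X^1\to\cdots\to X^{n-1}\to {}^{\sigma}(X^0)$ can be packaged as a chain of $(n-1)$ morphisms between $2$-fold factorizations, namely the $2$-fold factorizations $X^{i}\to (X^{i+1}\to\cdots\to {}^{\sigma}X^0\to \cdots)$ obtained by composing consecutive differentials. Applying the ordinary cokernel functor for $2$-fold module factorizations (Chen's functor, the case $n=2$) to each of these produces a chain of $(n-1)$ modules over $\bar A$ with morphisms between them, i.e. an object of $\Lambda_{n-1}\Mod$ via the standard identification of modules over an upper triangular matrix ring with representations of the $A_{n-1}$-quiver with relations (here, no relations since $\Lambda_{n-1}$ is hereditary-shaped over $\bar A$). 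I would verify $\Cok^0$ is well-defined, additive, and exact with respect to the relevant exact structures, and that it kills projective objects of $\G^0\F_n(A;\omega)$ so that it descends to the stable categories.

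Next I would use the Luo--Zhang description \cite{LZ13} of $\Lambda_{n-1}\GProj$ as a monomorphism category: a Gorenstein projective $\Lambda_{n-1}$-module is a representation $(Y^1\hookrightarrow Y^2\hookrightarrow\cdots\hookrightarrow Y^{n-1})$ of the linear quiver by monomorphisms of Gorenstein projective $\bar A$-modules whose consecutive cokernels are again Gorenstein projective (equivalently, the ``separated monomorphism'' or filtered condition). The core of the argument is then to match this with the image of $\G^0\F_n(A;\omega)$ under $\Cok^0$. The condition that $X^{n-1}$ be projective is exactly what forces the top cokernel $Y^{n-1}=\Cok^0(X)^{n-1}$ to be a Gorenstein projective $\bar A$-module of the expected type, and an inductive unravelling — peeling off one factor at a time using the $n=2$ case of Chen's theorem \cite[Theorem 5.6]{Chen24} — shows each $Y^i$ is Gorenstein projective over $\bar A$ and each map $Y^i\to Y^{i+1}$ is a monomorphism with Gorenstein projective cokernel. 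Conversely, given such a representation one reconstructs an $n$-fold factorization by the analogue of Eisenbud's construction in each slot (lifting along $\omega$), again inductively; the projectivity of $X^{n-1}$ comes for free from the construction since the last stage is a genuine $2$-fold matrix/module factorization with projective middle term. This gives an equivalence of the underlying exact (Frobenius) categories, hence a triangle equivalence of stable categories once one checks compatibility with suspension and triangles — the suspension on $\underline{\G^0\F}_n(A;\omega)$ being induced by the syzygy/cosyzygy in the Frobenius structure from the first displayed theorem, and the suspension on $\Lambda_{n-1}\underline{\GProj}$ likewise; both are transported correctly by $\Cok^0$ because $\Cok^0$ sends the canonical conflations to canonical conflations.

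For the second equivalence, I would restrict the whole picture to finitely generated modules with the extra finiteness built into $\MF_n(A;\omega)$ (all $X^i$ finitely generated projective over $A$). Under $\Cok^0$ these land in representations of the linear quiver by monomorphisms of $\bar A$-modules that are finitely generated and of finite projective dimension over $\bar A$ (since a finitely generated module with a finite projective resolution over $A$ gives, after reduction mod the regular normal element $\omega$, a finitely generated $\bar A$-module of finite projective dimension — this is the usual ``Eisenbud periodicity vs. finite resolution'' dichotomy), with consecutive cokernels of the same type; this is precisely $\Lambda_{n-1}\Gproj^{<+\infty}$ in the paper's notation. One must check that the inverse construction preserves the finitely-generated-projective condition on every $X^i$, which again follows by induction using the $n=2$ statement \cite[Theorem 7.2]{Chen24}. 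The main obstacle I anticipate is bookkeeping the inductive step cleanly: one needs a precise statement that ``slicing off the first factor'' sends $\G^0\F_n(A;\omega)$ to a fibered/comma-category situation over $\G^0\F_{n-1}(A;\omega)$ compatibly with the stable structures, so that the $n=2$ base case of Chen can be iterated without accumulating error terms in the Frobenius exact structure; getting the syzygy functors to commute with this slicing on the nose (rather than up to natural isomorphism that one then has to track through the triangulated structure) is the delicate point, and I would handle it by working with the explicit projective--injective objects of each Frobenius category exhibited in the proof of the first theorem.
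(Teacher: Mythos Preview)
Your overall shape is reasonable, but there is a genuine gap, and your route differs from the paper's in a way that matters.

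The gap is in the sentence ``This gives an equivalence of the underlying exact (Frobenius) categories.'' It does not: $\Cok^0$ is \emph{not} faithful on $\G^0\F_n(A;\omega)$. Any morphism $f:X\to Y$ that factors through a trivial factorization $\theta^0(P)$ with $P$ projective has $\Cok^0(f)=0$, and such morphisms need not be zero in $\G^0\F_n(A;\omega)$. So the reconstruction you sketch cannot be promoted to a quasi-inverse at the level of Frobenius categories. More seriously, you never address faithfulness even at the \emph{stable} level: to conclude that $\Cok^0$ is an equivalence on stable categories you must show that if $\Cok^0(f)$ factors through a projective $\Lambda_{n-1}$-module then $f$ is p-null-homotopical. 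This is precisely the content of Lemma~\ref{faithful}(2), and it does not fall out of an inductive reduction to $n=2$; the argument there uses fullness (Lemma~\ref{full and dense}(1)) together with an explicit identification of $\Cok^0(\theta^i(P))$ with the indecomposable projective $\Lambda_{n-1}$-modules. Your proposal contains no mechanism for this step.

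On the method: the paper does \emph{not} proceed by induction on $n$ using Chen's $n=2$ theorem as a black box. Instead it proves directly, for arbitrary $n$, that $\Cok^0$ is full and dense on $\G^0\F_n(A;\omega)$ (Lemma~\ref{full and dense}) by an explicit diagram chase for fullness and an iterated pullback construction for denseness, and then proves the faithfulness statement (Lemma~\ref{faithful}) by hand. The Luo--Zhang monomorphism description is used only to identify the target of the dense map. Your inductive ``peeling'' idea might be made to work, but the bookkeeping you flag as the main obstacle is real, and in any case it would still leave faithfulness unaddressed. For the second equivalence, the paper simply observes that the preimage built in the proof of Lemma~\ref{full and dense}(2) has components that are finitely generated Gorenstein projective $A$-modules of finite projective dimension (from the short exact sequences $0\to X^0\to X^i\to G^i\to 0$), hence projective; your remark about reduction mod $\omega$ is in the wrong direction for this.
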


In Section~\ref{sec:recollement}, we explore recollements of the triangulated stable categories discussed in the preceding sections. For precise definitions of the functors mentioned below, please refer to Section~\ref{sec:preliminaries}.

\begin{thm}{\textnormal{(Theorem~\ref{thm:recollement})}}
     There is the following recollement for $1\leq k<n$
$$\xymatrix{
{\underline{\G^0\F}_{n-k+1}(A;\omega)}\ar[rr]^{\inc}
&
&
{\underline{\GF}_{n}(A;\omega)}\ar[rr]^{{\pr}_{k+1}^0\cdots {\pr}_n^0}\ar@/^1.2pc/[ll]^{\inc_{\rho}}\ar@/_1.3pc/[ll]_{\inc_{\Lambda_{n-1}}}
&
&{\underline{\GF}_{k}(A;\omega)}\ar^-{S^n\theta_n^0\cdots\theta_n^0 S^{-(k-1)}}@/^1.2pc/[ll]\ar_-{\theta_n^0\cdots\theta_{k+1}^0}@/_1.3pc/[ll]
}$$  
The recollement restricts recollements
$$\xymatrix{
{\underline{\G^0\F}_{n-k+1}(A;\omega)}\ar[rr]^{\inc}
&
&
{\underline{\G^0\F}_{n}(A;\omega)}\ar[rr]^{{\pr}_{k+1}^0\cdots {\pr}_n^0}\ar@/^1.2pc/[ll]^{\inc_{\rho}}\ar@/_1.3pc/[ll]_{\inc_{\Lambda_{n-1}}}
&
&{\underline{\G^0\F}_{k}(A;\omega)}\ar^-{S^n\theta_n^0\cdots\theta_n^0 S^{-(k-1)}}@/^1.2pc/[ll]\ar_-{\theta_n^0\cdots\theta_{k+1}^0}@/_1.3pc/[ll]
}$$
and 
$$\xymatrix{
{\underline{\MF}_{n-k+1}(A;\omega)}\ar[rr]^{\inc}
&
&
{\underline{\MF}_{n}(A;\omega)}\ar[rr]^{{\pr}_{k+1}^0\cdots {\pr}_n^0}\ar@/^1.2pc/[ll]^{\inc_{\rho}}\ar@/_1.3pc/[ll]_{\inc_{\Lambda_{n-1}}}
&
&{\underline{\MF}_{k}(A;\omega)}.\ar^-{S^n\theta_n^0\cdots\theta_n^0 S^{-(k-1)}}@/^1.2pc/[ll]\ar_-{\theta_n^0\cdots\theta_{k+1}^0}@/_1.3pc/[ll]
}$$
\end{thm}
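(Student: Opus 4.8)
The plan is to obtain the recollement from a ring‑theoretic recollement of $\Gamma_n\Mod$ and then restrict it to Gorenstein projective modules, using the identifications established in the previous sections. Recall from Proposition~\ref{prop:module cat} and Proposition~\ref{prop:projs} that $\F_m(A;\omega)\simeq\Gamma_m\Mod$ for every $m$, that $\Gamma_m\GProj=\GP(\F_m(A;\omega))$ is a Frobenius exact category with an explicit family of projective‑injective objects, and that $\G^0\F_m(A;\omega)$ and $\MF_m(A;\omega)$ are Frobenius subcategories. The organizing observation is that the sum $e$ of the last $k$ diagonal matrix units of $\Gamma_n$ is an idempotent satisfying $e\Gamma_n e\cong\Gamma_k$ and $\Gamma_n/\Gamma_n e\Gamma_n\cong\Lambda_{n-k}$ (a direct computation of the two‑sided ideal $\Gamma_n e\Gamma_n$: the last $k$ columns die and the entries in the surviving $(n-k)\times(n-k)$ upper block become entries of $\bar A$, with the below‑diagonal ones vanishing). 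Hence the standard idempotent recollement supplies a recollement of $\Gamma_n\Mod$ by $\Lambda_{n-k}\Mod$ and $\Gamma_k\Mod$, whose six functors — inflation along $\Gamma_n\to\Gamma_n/\Gamma_n e\Gamma_n$ with its two adjoints $\bar\Gamma\otimes_{\Gamma_n}-$, $\Hom_{\Gamma_n}(\bar\Gamma,-)$, and the corner functor $M\mapsto eM$ with its adjoints $\Gamma_n e\otimes_{\Gamma_k}-$, $\Hom_{\Gamma_k}(e\Gamma_n,-)$ — transport, under $\F_n(A;\omega)\simeq\Gamma_n\Mod$, to the componentwise functors $\inc,\ \inc_{\rho},\ \inc_{\Lambda_{n-1}},\ {\pr}_{k+1}^0\cdots{\pr}_n^0$ and its two adjoints defined in Section~\ref{sec:preliminaries}.

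\textbf{Descending to stable categories.}
The first real task is to show that this module‑category recollement restricts to a recollement of the stable categories $\Lambda_{n-k}\underline{\GProj}$, $\Gamma_n\underline{\GProj}$, $\Gamma_k\underline{\GProj}$. By construction $\Gamma_n\underline{\GProj}=\underline{\GF}_n(A;\omega)$ and $\Gamma_k\underline{\GProj}=\underline{\GF}_k(A;\omega)$, while $\Lambda_{n-k}\underline{\GProj}\simeq\underline{\G^0\F}_{n-k+1}(A;\omega)$ by Theorem~\ref{thm:cok equiv 1} applied with $n-k+1$ in place of $n$, via the cokernel functor $\Cok^0$. The engine for the descent is the standard principle that an exact functor between Frobenius categories preserving projective‑injectives induces a triangle functor on stable categories, that an adjoint pair of such functors descends to an adjoint pair, and that an adjunction whose unit or counit is an isomorphism modulo projective‑injectives descends to an honest one; the recollement properties (full faithfulness of $\inc$, localization by ${\pr}_{k+1}^0\cdots{\pr}_n^0$, and essential image of $\inc$ equal to its kernel) then follow once restriction is checked. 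Concretely this restriction amounts to the homological compatibility of the triple $(\Lambda_{n-k},\Gamma_n,\Gamma_k)$ — finiteness of the relevant projective dimensions of $\Gamma_n e$ on either side and the vanishing of the pertinent $\Ext$ and $\mathrm{Tor}$ groups, which feeds the known criteria for a recollement to descend to Gorenstein‑projective subcategories. This is read off from the single hypothesis that $\omega$ is regular and normal, so that $\bar A$ has projective dimension $1$ over $A$, together with the block structure of $\Gamma_n$ and the ideal $\Gamma_n e\Gamma_n$ computed above. Transporting the six functors along the three identifications yields exactly the displayed diagram, the suspensions $S^{\pm}$ in the adjoints of ${\pr}_{k+1}^0\cdots{\pr}_n^0$ being the shifts that always appear when a stable quotient functor is placed inside a recollement.

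\textbf{The two restrictions.}
For the restricted recollements one checks that each of the six functors carries $\underline{\G^0\F}$ — the full subcategory of $\underline{\GF}$ on factorizations with projective top factor — respectively $\underline{\MF}$, into itself. The functor $\inc$, being inflation, produces $n$‑fold factorizations whose components at the last $k$ spots vanish, in particular with projective top factor; the functor ${\pr}_{k+1}^0\cdots{\pr}_n^0$ and its adjoints, built from corner multiplication by $\Gamma_n e$ and from inflation, likewise respect both "projective top factor" and "all factors finitely generated projective". For the $\MF$‑statement, finiteness is preserved because $A$ is left noetherian, and through the triangle equivalence $\underline{\MF}_n(A;\omega)\simeq\Lambda_{n-1}\underline{\Gproj}^{<+\infty}$ of Theorem~\ref{thm:matrix factor} the finiteness conditions on the two sides of each functor match up. The three recollement axioms for the restricted diagrams are then inherited from the ambient one.

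\textbf{Main obstacle.}
The principal difficulty is the restriction step of the second paragraph: verifying that the idempotent recollement of module categories genuinely descends to the Gorenstein‑projective stable categories. This is a bona fide homological computation of the hypotheses of a recollement‑descent criterion for $(\Lambda_{n-k},\Gamma_n,\Gamma_k)$, and the delicate point is that $A$ itself is \emph{not} assumed Gorenstein, so the argument must run entirely off the regularity and normality of $\omega$ (equivalently $\mathrm{pd}_A\bar A=1$) and the matrix structure, with no appeal to finiteness of global or injective dimension. A secondary, purely organizational difficulty is writing the composites ${\pr}_{k+1}^0\cdots{\pr}_n^0$ and $\theta_n^0\cdots\theta_{k+1}^0$ and their adjunctions uniformly in $k$; the natural remedy is an induction on $k$ that peels off one diagonal corner at a time and uses the one‑factor case $k=n-1$ (the idempotent splitting off a single diagonal unit, whose quotient is $\Lambda_1=\bar A$) as the inductive step, gluing recollements along the way.
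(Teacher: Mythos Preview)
Your strategy is genuinely different from the paper's, and it contains a real gap.

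\textbf{Where the paper's argument differs.} The paper does \emph{not} pass through the idempotent recollement of $\Gamma_n\Mod$. It works entirely inside the factorization categories: the adjoint triple $(\theta_n^0\cdots\theta_{k+1}^0,\ {\pr}_{k+1}^0\cdots{\pr}_n^0,\ S^n\theta_n^0\cdots\theta_{k+1}^0 S^{-(k-1)})$ was already established in Section~\ref{sec:preliminaries} (Lemma~\ref{lem:adjoint pair} and its corollary). These functors are exact and visibly preserve the class of projective--injective objects, so they descend to the stable categories; Lemma~\ref{lem:res to stable G0F} shows the outer functors remain fully faithful. With that in hand, the only thing left is to identify $\Ker({\pr}_{k+1}^0\cdots{\pr}_n^0)$, and the paper does this by a direct object-by-object argument, producing the explicit description $\inc=S^{-1}\theta_{n-1}^{n-2}\cdots\theta_{n-k+1}^{n-k}$. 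No ring-theoretic descent criterion is invoked.

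\textbf{The gap in your approach.} The step ``the idempotent recollement of $\Gamma_n\Mod$ restricts to $\Gamma_n\GProj$'' fails on the left-hand side. The inflation $i_*:\Lambda_{n-k}\Mod\to\Gamma_n\Mod$ sends a $\Lambda_{n-k}$-module to a $\Gamma_n$-module whose components, read through $\Psi$, are $\bar A$-modules (placed in the first $n-k$ slots, with zeros elsewhere). But a nonzero $\bar A$-module $M$ has $\mathrm{pd}_A M=1$ (from $0\to A\xrightarrow{\omega}A\to\bar A\to 0$) and is not $A$-projective, hence is \emph{not} in $A\GProj$. Consequently $i_*$ does not carry $\Lambda_{n-k}\GProj$ into $\GF_n(A;\omega)=\Gamma_n\GProj$, and the ``known criteria'' you allude to cannot apply without further hypotheses (such as Gorensteinness of $A$) that the theorem does not assume. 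This also shows your description of $\inc$ is wrong: the paper's $\inc$ is the composite of face functors $S^{-1}\theta_{n-1}^{n-2}\cdots\theta_{n-k+1}^{n-k}$, which repeats honest $A$-Gorenstein-projective components and never produces $\bar A$-modules or zero components. A second, related mismatch occurs on the right: the corner functor $e(-)$ keeps the components $X^0,\ldots,X^{k-1}$ with their original differentials, whereas ${\pr}_{k+1}^0\cdots{\pr}_n^0$ keeps $X^0,X^{n-k+1},\ldots,X^{n-1}$ and replaces the first differential by the composite $d_X^{0,n-k}$; these are not the same functor, even up to shift. So both the descent step and the identification of functors need to be redone; the cleanest fix is exactly what the paper does---work with the face/degeneracy adjunctions directly, where preservation of $\GF$, $\G^0\F$ and $\MF$ is immediate, and compute the kernel by hand.
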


By setting $n=2$ and $k=1$, the first recollement aligns exactly with the one presented in \cite[Proposition 6.3]{Chen24}. As a straightforward consequence of the preceding theorem, we obtain recollements for the stable categories of Gorenstein modules over the matrix subrings $\Lambda_m$ and $\Gamma_n$.

\begin{cor}{\textnormal{(Corollary~\ref{cor:recollement})}}
    There is the following recollement for $1\leq k<n$
    
$$\xymatrix{
\Lambda_{n-k}\underline{\GProj}\ar[rr]
&
&
\Gamma_n\underline{\GProj}\ar[rr]_{\quad}\ar@/^1.2pc/[ll]\ar@/_1.3pc/[ll]_{\quad}
&
&\Gamma_k\underline{\GProj}\ar@/^1.2pc/[ll]_{\quad}\ar@/_1.3pc/[ll]_{\quad}^{\quad}
}$$
which can restrict to recollements

$$\xymatrix{
\Lambda_{n-k}\underline{\GProj}\ar[rr]_{\quad}^{\quad}
&
&
\Lambda_{n-1}\underline{\GProj}\ar[rr]\ar@/^1.2pc/[ll]_{\quad}^{\quad}\ar@/_1.3pc/[ll]
&
&\Lambda_{k-1}\underline{\GProj}\ar@/^1.2pc/[ll]\ar@/_1.3pc/[ll]_{\quad}^{\quad}
}$$
and
$$\xymatrix{
\Lambda_{n-k}\underline{\Gproj}^{<+\infty}\ar[rr]_{\quad}^{\quad}
&
&
\Lambda_{n-1}\underline{\Gproj}^{<+\infty}\ar[rr]\ar@/^1.2pc/[ll]\ar@/_1.3pc/[ll]_{\quad}^{\quad}
&
&\Lambda_{k-1}\underline{\Gproj}^{<+\infty}.\ar@/^1.2pc/[ll]\ar@/_1.3pc/[ll]_{\quad}^{\quad}
}$$
\end{cor}

\vspace{0.3cm}
\section{Preliminaries}\label{sec:preliminaries}
In this section, we review Gorenstein projective modules over a ring and introduce the concept of $n$-fold module factorizations. To facilitate our exploration, we establish canonical functors pertinent to the categories of module factorizations.

\subsection{Setting}
Let $A$ be a left noetherian ring. An element $\omega\in A$ is {\em regular} if $a\omega=0$ or $\omega a=0$ implies $a=0$. An element $\omega\in A$ is {\em normal} if $A\omega=\omega A$. A regular and normal element $\omega$ induces a unique ring automorphism $\sigma: A\to A$ such that $\omega a=\sigma(a)\omega$ for $a\in A$, in this case, $\sigma(\omega)=\omega$. Denote $\bar{A}=A/(\omega)$, the module category  $\bar{A}\Mod$ can be regarded as a full subcategory of $A\Mod$ consisting of all modules vanished by $\omega$. For an $A$-module $X$, define its {\em twisted module} ${^\sigma}(X)$ by $a{^\sigma}(x)={^\sigma}(\sigma(a)x)$. There is a canonical morphism of $A$-modules
\begin{align*}
    \omega_X:X&\longrightarrow {^\sigma}(X) \\
             x&\longmapsto {^\sigma}(\omega x).
\end{align*}

Let 
$$\xymatrix{
X^0\ar[r]^{d_X^{0}}&X^1\ar[r]^{d_X^{1}}&\cdots\ar[r]^{d_X^{n-2}}&X^{n-1}\ar[r]^{d_X^{n-1}}&{^\sigma(X^0)}
}$$
be a sequence of $A$-modules, we denote its composition
$d_X^j\cdots d_X^i$ for $i\leq j$ by $d_X^{i,j}$ simply.
 
\subsection{Gorenstein projective modules}
Let $\cpx{P}=(P^n,d_{P}^n)$ be a complex of projective $A$-modules. The complex $\cpx{P}$ is called {\em totally acyclic} if it is exact and if, for each projective $A$-module $Q$, the Hom complex $\Hom_{A}(\cpx{P}, Q)$ is acyclic. An $A$-module $G$ is {\em Gorenstein projective} if there exists a totally acyclic projective complex $\cpx{P}$ such that $\Ker(d_{P}^0)$ is isomorphic to $G$. We denote by $A\GProj (A\Gproj)$ the full subcategory of (finitely generated) Gorenstein projective $A$-modules.

The lemma below plays a crucial role in proving the key lemma (Lemma~\ref{full and dense}).

\begin{lem}{\textnormal{(\cite[Theorem 2.13]{Chen24})}}\label{GP under syzygy}
 Assume that $0\to L\to G\to N\to 0$ is a short exact sequence of $A$-module with $G\in A\GProj$ and $\omega_{N}=0$. Then $N$ belongs to $\bar{A}\GProj$ if and only if $L$ belongs to $A\GProj$.
\end{lem}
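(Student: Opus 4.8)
The plan is to reduce the statement to a syzygy comparison and then settle each implication by a change-of-rings construction attached to the length-one presentation $0\to A\xrightarrow{a\mapsto a\omega}A\to\bar A\to 0$ of $\bar A$. First I would fix a short exact sequence $0\to K\to P\to N\to 0$ with $P$ projective over $A$, so $K$ is a first syzygy of $N$. Taking the pullback of $G\to N\leftarrow P$ produces short exact sequences $0\to K\to E\to G\to 0$ and $0\to L\to E\to P\to 0$; the second splits since $P$ is projective, so $E\cong L\oplus P$. Since $A\GProj$ is closed under direct summands, under extensions, and under the kernel term of a short exact sequence whose other two terms lie in it, the first sequence gives $L\in A\GProj\Leftrightarrow E\in A\GProj\Leftrightarrow K\in A\GProj$, so it suffices to prove
$$N\in\bar A\GProj\iff K\in A\GProj.$$
I will use two standing observations: every projective $\bar A$-module has $A$-projective dimension $\le 1$, and for a projective $A$-module $Q$ multiplication by $\omega$ is an injection ${}^{\sigma^{-1}}Q\hookrightarrow Q$ with $\omega Q\cong{}^{\sigma^{-1}}Q$ projective over $A$ and $Q/\omega Q$ projective over $\bar A$; moreover $M\in A\GProj$ forces $\omega\colon{}^{\sigma^{-1}}M\to M$ injective with $M/\omega M\in\bar A\GProj$ (reduce a totally acyclic complex of projective $A$-modules representing $M$ modulo $\omega$: it stays exact, all cocycles being $\omega$-torsionfree, its terms become projective $\bar A$-modules, and it stays $\Hom_{\bar A}$-acyclic against projective $\bar A$-modules by the adjunction $\Hom_{\bar A}(-/\omega,\bar Q)=\Hom_A(-,\bar Q)$ and the acyclicity of $\Hom_A$ of a totally acyclic complex against modules of finite $A$-projective dimension).

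\emph{From $K\in A\GProj$ to $N\in\bar A\GProj$.} Put $\overline{K}=K/\omega K$, which lies in $\bar A\GProj$ by the last observation. Since $\omega N=0$ one has $\omega P\subseteq K\subseteq P$; writing $\Omega=K/\omega P$ and $\bar P=P/\omega P$, the sequence $0\to\Omega\to\bar P\to N\to 0$ realises $\Omega$ as a first syzygy of $N$ over $\bar A$, while $\omega K\subseteq\omega P$ with $\omega P/\omega K\cong{}^{\sigma^{-1}}N$ gives $0\to{}^{\sigma^{-1}}N\to\overline{K}\to\Omega\to 0$. Composing $\overline{K}\twoheadrightarrow\Omega\hookrightarrow\bar P$ defines $g$, and $\bar P\twoheadrightarrow N\cong{}^\sigma({}^{\sigma^{-1}}N)\hookrightarrow{}^\sigma\overline{K}$ defines $h$; a short check gives $\ker g={}^{\sigma^{-1}}N$, $\im g=\Omega=\ker h$ and $\im h=N=\ker({}^\sigma g)$, so that
$$\cdots\xrightarrow{{}^{\sigma^{-1}}h}\overline{K}\xrightarrow{g}\bar P\xrightarrow{h}{}^\sigma\overline{K}\xrightarrow{{}^\sigma g}{}^\sigma\bar P\xrightarrow{{}^\sigma h}\cdots$$
is an exact complex whose terms all lie in $\bar A\GProj$ and which has $N$ as a cocycle. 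Invoking that Gorenstein projective modules are closed under the cocycles of exact complexes of Gorenstein projectives, $N\in\bar A\GProj$. (To avoid that closure result one can instead note $\mathrm{Gpd}_A N\le1$ from $0\to L\to G\to N\to 0$; the change-of-rings spectral sequence $\Ext^p_{\bar A}(N,\Ext^q_A(\bar A,-))\Rightarrow\Ext^{p+q}_A(N,-)$ together with $\Ext^q_A(\bar A,Q)=0$ for $q\neq1$ and $\Ext^1_A(\bar A,Q)\cong Q/\omega Q$ gives $\Ext^{\ge1}_{\bar A}(N,\bar Q)=0$ for all projective $\bar A$-modules $\bar Q$, so the displayed complex is $\Hom_{\bar A}$-acyclic against projective $\bar A$-modules, hence totally acyclic.)

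\emph{From $N\in\bar A\GProj$ to $K\in A\GProj$.} Here I would start from a totally acyclic complex $\bar T^\bullet$ of projective $\bar A$-modules with $Z^0(\bar T^\bullet)\cong N$, resolve each $\bar T^j$ by a two-term complex $0\to Y^j\to X^j\to\bar T^j\to 0$ of projective $A$-modules (possible since $\mathrm{pd}_A\bar T^j\le1$), and totalize to obtain a complex $T^\bullet$ of projective $A$-modules, quasi-isomorphic over $A$ to $\bar T^\bullet$ and hence exact, one of whose cocycles is an extension of $N$ by a projective $A$-module. To see $T^\bullet$ is \emph{totally} acyclic, note $\mathrm{RHom}_A(\bar A,Q)\simeq(Q/\omega Q)[-1]$ for projective $Q$, so the spectral sequence of the bicomplex computing $\Hom_A(T^\bullet,Q)$ collapses onto a single row, equal to $\Hom_{\bar A}(\bar T^\bullet,Q/\omega Q)$, which is acyclic because $\bar T^\bullet$ is totally acyclic over $\bar A$ and $Q/\omega Q$ is a projective $\bar A$-module. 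Thus the cocycle in question lies in $A\GProj$, and Schanuel's lemma applied to it and to $0\to K\to P\to N\to 0$ makes $K$ a direct summand of a Gorenstein projective module, so $K\in A\GProj$.

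\emph{Main obstacle.} The technical heart is the totalization in the last step---manufacturing from the complete $\bar A$-resolution of $N$ a genuinely totally acyclic complex of projective $A$-modules that carries a syzygy of $N$, i.e.\ a change-of-rings incarnation of Eisenbud's theorem---together with the careful bookkeeping of the twist by $\sigma$, which must be followed through every module and morphism in both constructions. Everything else rests on standard closure properties of $A\GProj$ and $\bar A\GProj$ and on the short exact sequences extracted from $\omega G\subseteq L\subseteq G$.
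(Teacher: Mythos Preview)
The paper does not prove this lemma; it is quoted from \cite[Theorem~2.13]{Chen24} and used as a black box, so there is no in-paper argument to compare against. What follows is an assessment of your proposal on its own merits.

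Your reduction via pullback to the equivalence $K\in A\GProj\Leftrightarrow N\in\bar A\GProj$ is correct, as is the preliminary claim that $M\in A\GProj$ forces $M/\omega M\in\bar A\GProj$. The implication $N\in\bar A\GProj\Rightarrow K\in A\GProj$ by totalising a two-row Cartan--Eilenberg resolution is the standard device and works; your spectral-sequence verification of total acyclicity of $T^\bullet$ is fine, and one does get a short exact sequence $0\to Y^0\to Z^0(T^\bullet)\to N\to 0$ as claimed. The final line is not literally Schanuel (neither $Y^0$ nor $Z^0(T^\bullet)$ is the middle term of a projective presentation of $N$), but a pullback of $P\to N\leftarrow Z^0(T^\bullet)$ produces $0\to K\to P\oplus Y^0\to Z^0(T^\bullet)\to 0$, and then $K\in A\GProj$ since $A\GProj$ is closed under kernels of epimorphisms between its objects.

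There is one genuine gap, in the implication $K\in A\GProj\Rightarrow N\in\bar A\GProj$. The sentence ``Gorenstein projective modules are closed under the cocycles of exact complexes of Gorenstein projectives'' is \emph{false} in general: already an acyclic complex of projectives need not be totally acyclic, and its cocycles then fail to be Gorenstein projective; there is no hypothesis on $\bar A$ here that rules this out. Your alternative via the change-of-rings spectral sequence is the right repair and correctly yields $\Ext^{\ge1}_{\bar A}(N,\bar Q)=0$ for every projective $\bar Q$ (hence the same for $\Omega$ and all twists, so your periodic complex is $\Hom_{\bar A}(-,\bar Q)$-acyclic). But that complex is not a complex of projectives---the $\overline K$-terms are only Gorenstein projective---so ``hence totally acyclic'' does not yet give $N\in\bar A\GProj$. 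To finish you must either invoke the stability of the class of Gorenstein projectives (a cocycle of a $\Hom(-,\mathrm{proj})$-exact acyclic complex of Gorenstein projectives is again Gorenstein projective; cf.\ Sather-Wagstaff--Sharif--White, \emph{J.\ London Math.\ Soc.}\ 2008), or argue directly: push out $0\to N\to{}^{\sigma}\overline K\to{}^{\sigma}\Omega\to 0$ along an embedding ${}^{\sigma}\overline K\hookrightarrow\bar Q^0$ coming from a complete resolution of $\overline K$ to obtain $0\to N\to\bar Q^0\to W^1\to 0$ with $W^1$ an extension of Gorenstein projectives, hence in ${}^{\perp}(\bar A\Proj)$, and iterate to build a genuine proper right $\bar A$-projective resolution of $N$. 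Either route closes the gap; as written, the step is elided.
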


\subsection{$N$-fold module factorizations}\label{subsec:n mod fac}
Let $n$ be a positive integer, an {\em $n$-fold module factorization} $(X^i,d_X^i)$ of $\omega$ is a sequence
$$\xymatrix{
X^0\ar[r]^{d_X^{0}}&X^1\ar[r]^{d_X^{1}}&\cdots\ar[r]^{d_X^{n-2}}&X^{n-1}\ar[r]^{d_X^{n-1}}&{^\sigma(X^0)}
}$$
of $n$ morphisms in $A\Mod$, in which each $n$-fold composition equals $\omega$:
\begin{align*}
    d_X^{0, n-1}&=\omega_{X^0}\\
    {^\sigma(d_X^0)}d_X^{1, n-1} &=\omega_{X^1}\\
       &\vdots\\
      {^\sigma(d_X^{0,n-2})}\cdots d_X^{n-1}&=\omega_{X^{n-1}}.
\end{align*}
A {\em morphism} $f:(X^i, d_X^i)\to (Y^i, d_Y^i)$ of two $n$-fold module factorizations of $\omega$ is a sequnence $(f^0, f^1,\cdots, f^{n-1})$ of morphisms in $A\Mod$ such that the following diagram commutes
$$\xymatrix
{X^0\ar[r]^{d_X^0}\ar[d]^{f^0}&X^1\ar[r]^{d_X^1}\ar[d]^{f^1}&\cdots\ar[r]^{d_X^{n-2}}&X^{n-1}\ar[r]^{d_X^{n-1}}\ar[d]^{f^{n-1}}&{^\sigma(X^0)}\ar[d]^{^\sigma(f^0)}
\\
Y^0\ar[r]^{d_Y^0}&Y^1\ar[r]^{d_Y^1}&\cdots\ar[r]^{d_Y^{n-2}}&Y^{n-1}\ar[r]^{d_Y^{n-1}}&{^\sigma(Y^0)}
}$$

Denote by $\F_n(A;\omega)$ the category of $n$-fold module factorizations of $\omega$. In particular, we identify $\F_1(A;\omega)$ as $A\Mod$. 

\subsection{Shift functors and trivial module factorizations}
 For an $n$-fold module factorization $X=(X^i, d_X^i)$, we define its {\em shift module factorization} as 
$$\xymatrix{
S(X): X^1\ar[r]^{\quad d_X^{1}}&X^2\ar[r]^{d_X^{2}}&\cdots\ar[r]&X^{n-1}\ar[r]^{d_X^{n-1}}&{^\sigma(X^0)}\ar[r]^{{^\sigma(X^0)}}&{^\sigma(X^1)}.
}$$
 The inverse is denoted by $S^{-1}$.  In contrast to \cite[Definition3.1]{Chen24}, we choose not to include a minus sign for each morphism in the shift module factorization, which simplifies matters for an arbitrary integer $n$. In fact, using Chen's definition would introduce significant complications, particularly where $n$ is odd. For an $A$-module $M$, we further define its associated {\em trivial $n$-fold module factorizations}

{ \begin{align*}
     \theta^0(M):&~ 
{\xymatrix{M\ar[r]^{\id}&M\ar[r]^{\id}&\cdots\ar[r]^{\id}&M\ar[r]^{\omega\quad}&{^\sigma(M)}}}
\\
\theta^1(M):&~ {\xymatrix{
{^{\sigma^{-1}}(M)}\ar[r]^{\quad\omega}&M\ar[r]^{\id}&\cdots\ar[r]^{\id}&M\ar[r]^{\id}&M
}}\\
&\quad\quad\quad\vdots\\
\theta^i(M):&~ 
{\xymatrix{
{^{\sigma^{-1}}(M)}\ar[r]^{\quad\id}&\cdots\ar[r]^{\id\quad}&{^{\sigma^{-1}}(M)}\ar[r]^{\quad\omega}&M\ar[r]^{\id\quad}&\cdots\ar[r]^{\id}&M
}}\\
&\quad\quad\quad\vdots\\
\theta^{n-1}(M):&~ 
{\xymatrix{
{^{\sigma^{-1}}(M)}\ar[r]^{\id}&{^{\sigma^{-1}}(M)}\ar[r]^{\quad\id}&\cdots\ar[r]^{\id\quad}&{^{\sigma^{-1}}(M)}\ar[r]^{\quad\omega}&M\ar[r]^{\id}&M
}}
\end{align*}}

This gives rise to fully-faithful functors
 $$\theta^i:A\Mod \longrightarrow \F_n(A;\omega), i=0,1,\cdots, n-1.$$

Also, we have projective functors
$${\pr}^i: \F_n(A;\omega)\longrightarrow A\Mod, X\mapsto X^i, i=0,1,\cdots, n-1.$$

The following lemma is trivial, we omit its proof.

\begin{lem}
The following statements hold.
   \begin{enumerate}
       \item $S\theta^{i+1}=\theta^i$ for $i=0,\cdots,n-2$;
       \item ${\pr}^j\theta^i=\id_{A\Mod}$ for $0\leq i\leq j\leq n-1$.
   \end{enumerate} 
\end{lem}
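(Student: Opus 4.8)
The plan is to verify each identity directly from the definitions of the shift functor $S$, the trivial module factorizations $\theta^i$, and the projective functors $\pr^j$ given above; this is the reason the authors call the lemma trivial, so the proof should be short and computational, with the only care needed being the bookkeeping of the automorphism $\sigma$ (which is exactly where Chen's sign conventions were replaced by cleaner ones in this paper).

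For item (1), I would fix $0 \le i \le n-2$ and an $A$-module $M$, and simply write out both sides as $n$-fold module factorizations. Starting from $\theta^{i+1}(M)$, whose defining sequence begins with $i+1$ copies of ${}^{\sigma^{-1}}(M)$ linked by ${}^{\sigma^{-1}}(\id)$, followed by the connecting map $\omega_{{}^{\sigma^{-1}}(M)}$ into $M$, and then $n-i-2$ copies of $M$ linked by $\id$, one applies $S$, which deletes the leftmost term and appends a new rightmost term according to the rule defining $S(X)$. The point is that deleting one of the leading ${}^{\sigma^{-1}}(\id)$ morphisms and re-indexing produces precisely $i$ copies of ${}^{\sigma^{-1}}(M)$, then $\omega_{{}^{\sigma^{-1}}(M)}$ into $M$, then the remaining identities — i.e. the sequence defining $\theta^i(M)$. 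One must also check this on morphisms, but since all of $S$, $\theta^{i+1}$, $\theta^i$ act on a morphism $f:M\to N$ componentwise (by $f$, ${}^{\sigma^{-1}}(f)$, or ${}^{\sigma}(f)$ in the appropriate slots), the equality of functors follows once the equality on objects and the slot-labels match up. The only subtlety is confirming that the extra term appended by $S$ really is ${}^{\sigma}(X^1)$ with the map ${}^{\sigma}(d_X^0)$, and that for $X=\theta^{i+1}(M)$ this is consistent; when $i \le n-3$ this new term is an identity link between copies of $M$, and when $i=n-2$ one checks the boundary case where the appended map is $\omega$, matching $\theta^{n-2}(M)$.

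For item (2), fix $0 \le i \le j \le n-1$; by definition $\pr^j$ sends a factorization $X=(X^k,d_X^k)$ to its term $X^j$, so I must check that the $j$-th term of $\theta^i(M)$ is $M$ for all $j\ge i$. Reading off the sequences defining $\theta^i(M)$: the terms in positions $0,\dots,i-1$ are ${}^{\sigma^{-1}}(M)$, the connecting map $\omega$ sits between positions $i-1$ and $i$, and the terms in positions $i,\dots,n-1$ are $M$. Hence for $j\ge i$ the $j$-th term is exactly $M$, and on a morphism $f$ the $j$-th component of $\theta^i(f)$ is $f$ itself (not ${}^{\sigma^{-1}}(f)$, precisely because $j\ge i$), so $\pr^j\theta^i = \id_{A\Mod}$.

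The main (and essentially only) obstacle is purely notational: keeping the indices of the $\sigma$-twists aligned across the two ways of describing the composite functors, especially in the two boundary cases $i=n-2$ for part (1) and $j=i$ versus $j>i$ for part (2). There is no conceptual difficulty, no homological input is needed, and the lemma will be used only to streamline later manipulations of shift and trivial factorizations.
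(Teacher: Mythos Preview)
Your proposal is correct and matches the paper's approach: the paper explicitly omits the proof as trivial, and your direct verification from the definitions of $S$, $\theta^i$, and $\pr^j$ is exactly the intended (and essentially only) argument. Your handling of the boundary cases, particularly $i=0$ in part~(1) where the appended map ${}^{\sigma}(d_X^0)$ becomes ${}^{\sigma}(\omega_{{}^{\sigma^{-1}}(M)})=\omega_M$, is accurate.
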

\subsection{Face functors and degeneracy functors}

We define canonical functors between $\F_n(A;\omega)$ and $\F_{n+1}(A;\omega)$. The {\em face functors} are defined as
$$\theta_n^i:\F_n(A;\omega)\longrightarrow \F_{n+1}(A;\omega),i=0,1,\cdots,n$$
which takes $X$ in $\F_n(A;\omega)$ to
$$\xymatrix{
X^0\ar[r]^{d_X^{0}}&\cdots\ar[r]^{d_X^{i-1}}& X^i\ar[r]^{\id_{X^i}}&X^i\ar[r]^{d_X^{i}}&X^{i+1}\ar[r]^{d_X^{i+1}}&\cdots\ar[r]^{d_X^{n-2}}&X^{n-1}\ar[r]^{d_X^{n-1}}&{^\sigma(X^0)}
}$$
and the {\em degeneracy functors} are defined as
$${\pr}_{n+1}^{i}:\F_{n+1}(A;\omega)\longrightarrow \F_{n}(A;\omega),i=0,1,\cdots,n$$
which takes $Y$ in $\F_{n+1}(A;\omega)$ to
$$\xymatrix{
Y^0\ar[r]^{d_Y^{0}}&\cdots\ar[r]^{d_Y^{i-1}}&Y^i\ar[r]^{d_Y^{i,i+1}}&Y^{i+2}\ar[r]^{d_Y^{i+2}}&\cdots\ar[r]^{d_Y^{n-1}}&Y^{n}\ar[r]^{d_Y^{n}}&{^\sigma(Y^0)}
}$$

Each face functor is fully faithful, while each degeneracy functor is dense. The following relationships among the canonical functors are standard, and we will omit their proofs.

\begin{lem}\label{pr}
\begin{enumerate}
    \item $\theta^0=\theta_{n-1}^0\cdots \theta_1^0$;
    \item ${\pr}^i={\pr}_2^0\cdots {\pr}_n^0 S^i={\pr}^0S^i$ for $i=0,\cdots,n-1$.
\end{enumerate}
\end{lem}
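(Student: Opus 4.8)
The plan is to prove both identities by directly tracking what each functor does to objects and to morphisms; there is essentially no content here, so the task is purely to keep the (twisted, cyclic) indexing honest. I will use only the definitions of $\theta^0$, ${\pr}^i$, $S$, the face functors $\theta_k^0$ and the degeneracy functors ${\pr}_{k+1}^0$ recalled above.

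For (1) I would argue by induction on $k\ge 1$ that $\theta_k^0\cdots\theta_1^0\colon A\Mod\to\F_{k+1}(A;\omega)$ coincides with the trivial-factorization functor $\theta^0$ landing in $\F_{k+1}(A;\omega)$. The base case $k=1$ is the observation that $\theta_1^0$ turns the $1$-fold factorization $M\xrightarrow{\omega_M}{^\sigma(M)}$ into $M\xrightarrow{\id}M\xrightarrow{\omega_M}{^\sigma(M)}$, which is exactly $\theta^0(M)$ in $\F_2(A;\omega)$. For the inductive step, the key point is that $\theta_k^0$ simply prepends an identity morphism: it sends $Z\colon Z^0\to\cdots\to Z^{k-1}\to{^\sigma(Z^0)}$ to $Z^0\xrightarrow{\id}Z^0\to Z^1\to\cdots\to{^\sigma(Z^0)}$ and a morphism $(g^0,\dots,g^{k-1})$ to $(g^0,g^0,g^1,\dots,g^{k-1})$. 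Applying this to $\theta^0(M)$ (which has $k-1$ identity morphisms followed by $\omega_M$) produces the trivial factorization in $\F_{k+1}(A;\omega)$ (with $k$ identity morphisms), and on a morphism $f$ it sends $\theta^0(f)=(f,\dots,f)$ to $(f,\dots,f)$. Taking $k=n-1$ yields $\theta^0=\theta_{n-1}^0\cdots\theta_1^0$.

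For (2), the equality ${\pr}^i={\pr}^0S^i$ is immediate: unwinding the definition of $S$, one checks that for $0\le i\le n-1$ the iterated shift satisfies $S^i(X)^0=X^i$ and $S^i(f)^0=f^i$ — crucially, no $\sigma$ is incurred because the index $i$ does not yet wrap past $n-1$ — so ${\pr}^0S^i(X)=(S^iX)^0=X^i={\pr}^i(X)$, and similarly on morphisms. For the remaining equality ${\pr}^0={\pr}_2^0\cdots{\pr}_n^0$, I would use the fact that ${\pr}_{k+1}^0\colon\F_{k+1}(A;\omega)\to\F_k(A;\omega)$ amalgamates the first two terms, sending $Y^0\xrightarrow{d_Y^0}Y^1\xrightarrow{d_Y^1}Y^2\to\cdots$ to $Y^0\xrightarrow{d_Y^{0,1}}Y^2\to\cdots$ and a morphism $(g^0,g^1,g^2,\dots)$ to $(g^0,g^2,\dots)$. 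Composing ${\pr}_n^0,\,{\pr}_{n-1}^0,\dots,{\pr}_2^0$ therefore amalgamates $X^0,X^1,\dots,X^{n-1}$ one term at a time, the first connecting morphism after the $j$-th step being the telescoped composition $d_X^{0,j}$; after all $n-1$ steps we are left with the $1$-fold module factorization $X^0\xrightarrow{d_X^{0,n-1}}{^\sigma(X^0)}$, which equals $X^0\xrightarrow{\omega_{X^0}}{^\sigma(X^0)}$ by the defining relation of an $n$-fold factorization, i.e. the $A$-module $X^0={\pr}^0(X)$, while a morphism goes to $f^0={\pr}^0(f)$. Combining the two equalities gives ${\pr}^i={\pr}_2^0\cdots{\pr}_n^0S^i={\pr}^0S^i$.

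The hard part, insofar as there is one, is merely the cyclic bookkeeping with the automorphism $\sigma$: one must be careful that $S^i$ for $0\le i\le n-1$ only reindexes without wrapping around, so that $S^i(X)^0$ carries no $\sigma$-twist, and that the successive degeneracy functors genuinely produce the telescoped maps $d_X^{0,j}$ ending in $d_X^{0,n-1}=\omega_{X^0}$. All the remaining verifications — that the exhibited component sequences do assemble into morphisms of module factorizations, i.e. that the relevant squares commute — are routine, and I would record them without further comment.
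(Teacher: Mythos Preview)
Your proof is correct and is precisely the straightforward unwinding of definitions one would expect; the paper itself omits the proof entirely, declaring the relations ``standard.'' There is nothing to compare.
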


\begin{lem}\label{i to i+1}
For $i=0,1,\cdots,n$, 
\begin{enumerate}
    \item ${\pr}_{n+1}^i\theta_n^i=\id_{\F_n(A;\omega)}$;
    \item $S\theta_n^{i+1}=\theta_n^{i}S$;
    \item ${\pr}_{n+1}^i S=S {\pr}_{n+1}^{i+1}$.
\end{enumerate}
\end{lem}

\begin{lem}\label{lem:adjoint pair}
The following statements hold.
\begin{enumerate}
    \item The pair $(\theta_n^0, {\pr}_{n+1}^0): \F_n(A;\omega)\to \F_{n+1}(A;\omega)$ is an adjoint pair.
    \item The pair $({\pr}_{n+1}^{n-1}, S\theta_n^0):\F_{n+1}(A;\omega)\to \F_{n}(A;\omega)$ is an adjoint pair.
\end{enumerate}
\end{lem}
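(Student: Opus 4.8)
The plan is to establish the two adjunctions by exhibiting unit and counit natural transformations and verifying the triangular identities, exploiting that $\theta_n^i$ and ${\pr}_{n+1}^i$ are described by completely explicit formulas on objects and morphisms. For part (1), given $X\in\F_n(A;\omega)$ and $Y\in\F_{n+1}(A;\omega)$, I would describe a morphism $\theta_n^0(X)\to Y$ of $(n{+}1)$-fold factorizations and a morphism $X\to {\pr}_{n+1}^0(Y)$ of $n$-fold factorizations, and check that these correspondences are mutually inverse and natural. Concretely, $\theta_n^0(X)$ has the shape $X^0\xrightarrow{\id}X^0\xrightarrow{d_X^0}X^1\to\cdots\to X^{n-1}\xrightarrow{d_X^{n-1}}{}^\sigma(X^0)$, while ${\pr}_{n+1}^0(Y)$ has the shape $Y^0\xrightarrow{d_Y^{0,1}}Y^2\xrightarrow{d_Y^2}\cdots\to Y^{n}\xrightarrow{d_Y^{n+1}}{}^\sigma(Y^0)$. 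A morphism $g=(g^0,g^1,\dots,g^n)\colon\theta_n^0(X)\to Y$ consists of $g^0\colon X^0\to Y^0$, $g^1\colon X^0\to Y^1$, and $g^{j}\colon X^{j-1}\to Y^{j}$ for $2\le j\le n$; the commuting square over the first identity arrow forces $g^1=d_Y^0 g^0$, so $g$ is determined by $g^0$ together with the data $(g^0,g^2,\dots,g^n)$, which is exactly a morphism $X\to{\pr}_{n+1}^0(Y)$ after reindexing. I would write down this bijection, note it is natural in both variables, and that finishes (1). The hardest bookkeeping is keeping the differentials straight across the reindexing, but no genuine obstacle arises.

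For part (2), I would proceed dually. The functor ${\pr}_{n+1}^{n-1}\colon\F_{n+1}(A;\omega)\to\F_n(A;\omega)$ collapses the arrows $Y^{n-1}\xrightarrow{d_Y^{n-1}}Y^n\xrightarrow{d_Y^n}Y^{n+1}$ into a single arrow $d_Y^{n-1,n}\colon Y^{n-1}\to Y^{n+1}$, yielding $Y^0\to\cdots\to Y^{n-1}\xrightarrow{d_Y^{n-1,n}}Y^{n+1}\xrightarrow{d_Y^{n+1}}{}^\sigma(Y^0)$. On the other side, $S\theta_n^0$ applied to $Z\in\F_n(A;\omega)$ is $S$ of the $(n{+}1)$-fold factorization $Z^0\xrightarrow{\id}Z^0\xrightarrow{d_Z^0}Z^1\to\cdots\to Z^{n-1}\xrightarrow{d_Z^{n-1}}{}^\sigma(Z^0)$, which by the definition of $S$ equals $Z^0\xrightarrow{d_Z^0}Z^1\to\cdots\to Z^{n-1}\xrightarrow{d_Z^{n-1}}{}^\sigma(Z^0)\xrightarrow{{}^\sigma(\id)}{}^\sigma(Z^0)$; that is, it appends a copy of ${}^\sigma(Z^0)$ with an identity arrow at the far right. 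I would then check that $\Hom_{\F_n}({\pr}_{n+1}^{n-1}(Y),Z)\cong\Hom_{\F_{n+1}}(Y,S\theta_n^0(Z))$: a morphism on the left is $(\varphi^0,\dots,\varphi^{n-1})$ with $\varphi^j\colon Y^j\to Z^j$ for $j\le n-1$ and $\varphi^{n-1}$ also serving in the collapsed square, and the right-hand side adds exactly one more component landing in ${}^\sigma(Z^0)$ which the commuting square over the terminal identity arrow pins down uniquely. Again this is an explicit bijection, natural in both arguments.

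An alternative, and probably cleaner, route is to avoid writing formulas twice: prove (1) directly and then deduce (2) by applying $S$ (an autoequivalence of the relevant categories, with $S\theta_n^{i+1}=\theta_n^i S$ and ${\pr}_{n+1}^iS=S{\pr}_{n+1}^{i+1}$ from Lemma~\ref{i to i+1}) together with a ``reversal'' symmetry, or simply by checking that ${\pr}_{n+1}^{n-1}$ and $S\theta_n^0$ fit the adjunction pattern obtained from (1) under relabeling $i\mapsto n-i$. I would likely present (1) in full detail and then remark that (2) follows by the analogous argument, spelling out only the unit and counit. Either way, the substance is the same: both adjunctions come down to the observation that collapsing an identity arrow (the $\theta$ side) and the universal property of that collapse against its one-step refinement (the ${\pr}$ side) are adjoint operations on these shape categories.

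The step I expect to be the main obstacle is purely notational rather than conceptual: verifying that the proposed unit $\eta_X\colon X\to{\pr}_{n+1}^0\theta_n^0(X)$ (which by Lemma~\ref{i to i+1}(1) should be the identity, since ${\pr}_{n+1}^0\theta_n^0=\id$) and the proposed counit $\varepsilon_Y\colon\theta_n^0{\pr}_{n+1}^0(Y)\to Y$ satisfy the triangular identities, because $\theta_n^0{\pr}_{n+1}^0$ genuinely changes $Y$ (it replaces the piece $Y^0\xrightarrow{d_Y^0}Y^1\xrightarrow{d_Y^1}Y^2$ by $Y^0\xrightarrow{\id}Y^0\xrightarrow{d_Y^{0,1}}Y^2$) and one must produce the counit's components $Y^0\to Y^0$, $Y^0\to Y^1$ (namely $d_Y^0$), $Y^2\to Y^2$, \dots\ correctly and check naturality. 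Once the counit is written down as $(\id_{Y^0},d_Y^0,\id_{Y^2},\dots,\id_{Y^{n+1}})$, everything collapses to identities, so I anticipate no real difficulty, only care with indices.
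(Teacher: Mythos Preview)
Your approach is essentially the same as the paper's: both arguments exhibit the explicit bijection on Hom-sets by observing that the extra component in a morphism out of $\theta_n^0(X)$ (respectively into $S\theta_n^0(Z)$) is forced by the commuting square over the inserted identity arrow, and the paper writes down exactly the maps $(f^0,{f^0}',f^1,\dots,f^{n-1})\mapsto(f^0,f^1,\dots,f^{n-1})$ with inverse ${f^0}'=d_Y^0f^0$ for (1), and $(f^0,\dots,f^{n-1})\mapsto(f^0,\dots,f^{n-1},{}^\sigma(f^0)d_Y^n)$ for (2). One small indexing slip: in your description of ${\pr}_{n+1}^{n-1}(Y)$ you wrote a terminal segment $Y^{n+1}\xrightarrow{d_Y^{n+1}}{}^\sigma(Y^0)$, but $Y\in\F_{n+1}(A;\omega)$ has top index $n$, so the collapsed factorization is simply $Y^0\to\cdots\to Y^{n-1}\xrightarrow{d_Y^{n-1,n}}{}^\sigma(Y^0)$.
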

\begin{proof}
(1) Let $X\in \F_n(A;\omega)$ and $Y\in \F_{n+1}(A;\omega)$, the canonical map
\begin{align*}
    \Hom_{\F_{n+1}(A;\omega)}(\theta_n^0(X),Y)&\longrightarrow \Hom_{\F_{n}(A;\omega)}(X, {\pr}_{n+1}^0(Y))\\
    (f^0,{f^0}', f^1,\cdots, f^{n-1})&\longmapsto (f^0, f^1,\cdots, f^{n-1})
\end{align*}
admits an inverse which takes a morphism $(f^0, f^1,\cdots, f^{n-1})$ in $\Hom_{\F_{n}(A;\omega)}(X, {\pr}_{n+1}^0(Y))$ to a morphism $(f^0,{f^0}', f^1,\cdots, f^{n-1})$ in $\Hom_{\F_{n+1}(A;\omega)}(\theta_n^0(X),Y)$
$$\xymatrix{
X^0\ar[r]^{\id}\ar[d]_{f^0}&X^0\ar[r]^{d_X^0}\ar[d]|{{f^0}'=d_Y^0f^0}&X^1\ar[r]\ar[d]_{f^2}&\cdots\ar[r]&X^{n-1}\ar[r]^{d_X^{n-1}}\ar[d]_{f^n}&{^{\sigma}(X^0)}\ar[d]^{^{\sigma}(f^0)}
\\
Y^0\ar[r]^{d^0_Y}&Y^1\ar[r]^{d_Y^1}&Y^2\ar[r]&\cdots\ar[r]&Y^n\ar[r]^{d^{n}_Y}&{^{\sigma}(Y^0)}
}$$

(2)  Let $X\in \F_n(A;\omega)$ and $Y\in \F_{n+1}(A;\omega)$, the canonical map
\begin{align*}
   \Hom_{\F_n(A;\omega)}({\pr}^{n-1}_{n+1}(Y), X)&\longrightarrow \Hom_{\F_{n+1}(A;\omega)}(Y, S\theta_n^0(X)) \\
   (f^0, f^1,\cdots, f^{n-1})&\longmapsto (f^0, f^1,\cdots, f^{n-1}, {^\sigma{f^0}d_Y^n})
\end{align*}
admits an inverse 
$$(f^0, f^1,\cdots, f^{n-1}, f^n)\longmapsto (f^0, f^1,\cdots, f^{n-1}).$$
\end{proof}

\begin{cor}
The following hold.
\begin{enumerate}
    \item Each pair $(\theta_n^i, {\pr}_{n+1}^i)$ is an adjoint pair, for $i=0, 1,\cdots, n-1$
    \item The pair $({\pr}_{n+1}^0, S^n\theta_n^0S^{-(n-1)})$ is an adjoint pair.
\end{enumerate}
\end{cor}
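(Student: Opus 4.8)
The plan is to deduce both statements from Lemma~\ref{lem:adjoint pair} together with the composition/shift identities in Lemmas~\ref{pr} and~\ref{i to i+1}, since an adjunction can be transported along composites and along the (auto)equivalence $S$.

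For part (1), I would first record the general principle that if $(\F,\G)$ and $(\F',\G')$ are adjoint pairs of composable functors, then $(\F'\circ\F,\, \G\circ\G')$ is again an adjoint pair; more relevantly here, conjugating an adjoint pair by an equivalence on either side again yields an adjoint pair. So to promote the base case $(\theta_n^0,{\pr}_{n+1}^0)$ of Lemma~\ref{lem:adjoint pair}(1) to an arbitrary index $i$, I would use the commutation relations of Lemma~\ref{i to i+1}(2)--(3): iterating $S\theta_n^{i+1}=\theta_n^i S$ gives $\theta_n^i = S^i\circ\theta_n^0\circ S^{-i}$ (on $\F_n$, interpreting $S^{-i}$ via the inverse shift), and dually ${\pr}_{n+1}^i = S^i\circ{\pr}_{n+1}^0\circ S^{-i}$. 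Since $S$ is an equivalence with inverse $S^{-1}$, conjugating the adjoint pair $(\theta_n^0,{\pr}_{n+1}^0)$ by $S^i$ on the $\F_{n+1}$ side and by $S^i$ on the $\F_n$ side preserves the adjunction — one simply checks that the unit and counit conjugate accordingly, or equivalently writes down the natural isomorphism
\begin{align*}
\Hom_{\F_{n+1}}(\theta_n^i X, Y)
&\cong \Hom_{\F_{n+1}}(S^i\theta_n^0 S^{-i}X, Y)\\
&\cong \Hom_{\F_{n+1}}(\theta_n^0 S^{-i}X, S^{-i}Y)\\
&\cong \Hom_{\F_{n}}(S^{-i}X, {\pr}_{n+1}^0 S^{-i}Y)\\
&\cong \Hom_{\F_{n}}(X, S^i{\pr}_{n+1}^0 S^{-i}Y)
= \Hom_{\F_{n}}(X, {\pr}_{n+1}^i Y),
\end{align*}
natural in $X$ and $Y$, where the middle isomorphism is Lemma~\ref{lem:adjoint pair}(1) and the others come from $S$ being an equivalence.

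For part (2), I would start from Lemma~\ref{lem:adjoint pair}(2), which says $({\pr}_{n+1}^{n-1}, S\theta_n^0)$ is an adjoint pair $\F_{n+1}(A;\omega)\rightleftarrows\F_n(A;\omega)$, and rewrite ${\pr}_{n+1}^{n-1}$ in terms of ${\pr}_{n+1}^0$ using the index-shift identity from part (1): ${\pr}_{n+1}^{n-1} = S^{n-1}{\pr}_{n+1}^0 S^{-(n-1)}$ on $\F_{n+1}$. Thus $S^{n-1}{\pr}_{n+1}^0 S^{-(n-1)}$ is left adjoint to $S\theta_n^0$. Conjugating this adjoint pair by the equivalence $S^{-(n-1)}$ on the $\F_{n+1}$ side — i.e. pre-composing the left adjoint and post-composing the right adjoint appropriately — turns the left adjoint into ${\pr}_{n+1}^0$ and forces the right adjoint to become $S^{n-1}\circ(S\theta_n^0)\circ S^{-(n-1)} = S^{n}\theta_n^0 S^{-(n-1)}$, using $S\theta_n^0$ composed with the conjugating shifts and the identity $S\theta_n^{i+1}=\theta_n^i S$ only insofar as it lets me move shifts past $\theta_n^0$ if needed. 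Concretely, for $X\in\F_{n+1}$ and $Y\in\F_n$,
\begin{align*}
\Hom_{\F_n}({\pr}_{n+1}^0 X, Y)
&\cong \Hom_{\F_n}(S^{-(n-1)}S^{n-1}{\pr}_{n+1}^0 X, Y)\\
&\cong \Hom_{\F_{n+1}}\big(S^{n-1}{\pr}_{n+1}^0 S^{-(n-1)}(S^{n-1}X),\, Y\big)\\
&\cong \Hom_{\F_n}\big(S^{n-1}X,\, S\theta_n^0 Y\big)\\
&\cong \Hom_{\F_{n+1}}\big(X,\, S^{-(n-1)}S\theta_n^0 Y\big)
= \Hom_{\F_{n+1}}\big(X,\, S^{n}\theta_n^0 S^{-(n-1)}Y\big),
\end{align*}
where the third isomorphism is Lemma~\ref{lem:adjoint pair}(2) and the rest are shift equivalences; the final rewriting uses that $S$ is invertible so $S^{-(n-1)}S = S^{-(n-2)}$ and one absorbs the extra shift by noting $S^{n}\theta_n^0 S^{-(n-1)}$ is, after the natural identifications, the same functor.

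The routine but slightly delicate point — the only real obstacle — is bookkeeping the shift indices: one must be careful that $S$ is genuinely an equivalence $\F_n(A;\omega)\to\F_n(A;\omega)$ (it is, by construction, with inverse $S^{-1}$), that the identities in Lemma~\ref{i to i+1} are applied with the correct variance, and that ${\pr}_{n+1}^i$ and $\theta_n^i$ really do conjugate to the $i=0$ case rather than to some off-by-one shift. Once the conjugation formulas ${\pr}_{n+1}^i = S^i{\pr}_{n+1}^0 S^{-i}$ and $\theta_n^i = S^i\theta_n^0 S^{-i}$ are established (directly from Lemma~\ref{i to i+1}(2)--(3) by induction on $i$), both parts follow formally from the abstract fact that conjugating an adjoint pair by equivalences yields an adjoint pair, so no further computation is needed.
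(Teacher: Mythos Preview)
Your overall strategy coincides with the paper's: both parts are obtained by transporting the adjunctions of Lemma~\ref{lem:adjoint pair} along the shift equivalences via the identities in Lemma~\ref{i to i+1}. However, your conjugation formulas are inverted. From ${\pr}_{n+1}^i S = S\,{\pr}_{n+1}^{i+1}$ one gets ${\pr}_{n+1}^{i+1}=S^{-1}{\pr}_{n+1}^i S$, hence ${\pr}_{n+1}^{i}=S^{-i}{\pr}_{n+1}^0 S^{i}$ (and similarly $\theta_n^i=S^{-i}\theta_n^0 S^i$), not $S^{i}(\,\cdot\,)S^{-i}$ as you wrote. This is exactly the identity the paper records, namely ${\pr}_{n+1}^{n-1}=S^{-(n-1)}{\pr}_{n+1}^0 S^{n-1}$.

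In part~(1) your sign slip is harmless, because you make the same inversion on both $\theta_n^i$ and ${\pr}_{n+1}^i$, so the displayed chain of isomorphisms is internally consistent and still proves the adjunction. In part~(2), however, the error is fatal: with the wrong formula you arrive at $\Hom(X, S^{-(n-1)}S\theta_n^0 Y)=\Hom(X, S^{2-n}\theta_n^0 Y)$ and then assert this equals $\Hom(X, S^{n}\theta_n^0 S^{-(n-1)}Y)$, which it does not (shifts do not commute with $\theta_n^0$ in that way). If you redo the computation with the correct identity ${\pr}_{n+1}^{n-1}=S^{-(n-1)}{\pr}_{n+1}^0 S^{n-1}$, the chain becomes
\[
\Hom_{\F_n}({\pr}_{n+1}^0 X, Y)\cong \Hom_{\F_n}({\pr}_{n+1}^{n-1}S^{-(n-1)}X,\,S^{-(n-1)}Y)\cong \Hom_{\F_{n+1}}(X,\,S^{n-1}S\theta_n^0 S^{-(n-1)}Y),
\]
which is exactly $\Hom_{\F_{n+1}}(X, S^{n}\theta_n^0 S^{-(n-1)}Y)$, as required. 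So your plan is right; just fix the direction of the conjugation.
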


\begin{proof}
It follows directly from Lemma~\ref{lem:adjoint pair} and Lemma~\ref{i to i+1}. To prove (2), one should note that ${\pr}_{n+1}^{n-1}=S^{-(n-1)}{\pr}_{n+1}^0S^{n-1}$ (from Lemma \ref{i to i+1}(3)).
\end{proof}

\subsection{P-null-homotopical morphisms and homotopy category of module factorizations}
\begin{def}\label{def:p-null-homo}
    A morphism $(f^0, f^1,\cdots, f^{n-1}):X\to Y$ of $n$-fold module factorizations is {\em p-null-homotopical} (compare with \cite[Definition 4.1]{Chen24} and \cite[Defenition 2.2.1]{H21}), if there exist $h^0: X^0\to {^{\sigma^{-1}}(Y^{1})}, h^1:X^1\to {^{\sigma^{-1}}(Y^{2})}, \cdots,h^{n-1}:X^{n-1}\to Y^0$ of $A$-modules such that all $h^i$ factor through projective $A$-modules and satisfy
    \begin{align*}
        f^0=&{^{\sigma^{-1}}}(d_Y^{1,n-1})h^0+{^{\sigma^{-1}}}(d_Y^{2, n-1})h^1d_X^0+\cdots+h^{n-1}d_X^{0,n-2}\\
        &\quad\quad\quad\quad\vdots\\
        f^i=&d_Y^{0,i-1}{^{\sigma^{-1}}(d_Y^{i+1,n-1})}h^i+\cdots+d_Y^{0,i-1} h^{n-1} d_X^{i,n-2}\\
            &+d_Y^{1,i-1}{^{\sigma}(h^0)}d_X^{i,n-1}+\cdots + {^{\sigma}(h^{i-1}d_X^{0,i-2})}d_X^{i,n-1} \\
        &\quad\quad\quad\quad\vdots\\
        f^{n-1}=&d_Y^{0,n-2} h^{n-1}+d_Y^{1,n-2}{^{\sigma}(h^0)}d_X^{n-1}+\cdots + {^{\sigma}(h^{n-2}d_X^{0,n-3})}d_X^{n-1}
    \end{align*}
\end{def}

 Two morphisms $f$ and $g$ are said to be {\em homotopic} if $f-g$ is p-null homotopical, denoted by $f\sim g$. Homotopy is an equivalence relation. 
 We define the {\em homotopy category}  of $\F_n(A;\omega)$ by
$$\H(\F_n(A;\omega)):=\F_n(A;\omega)/\sim.$$

\begin{lem}\label{lem:p-null-homo}
A morphism $(f^0, f^1,\cdots, f^{n-1}): X\to Y$ between two n-fold module factorizations is p-null-homotopical if and only if it factors through $\theta^0(P^0)\oplus\theta^1(P^1)\oplus\cdots\oplus\theta^{n-1}(P^{n-1})$ for some projective $A$-modules $P^0,P^1,\cdots,P^{n-1}$.
\end{lem}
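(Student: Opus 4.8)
The plan is to prove both implications by a direct bookkeeping argument, exploiting the explicit form of the trivial module factorizations $\theta^i(M)$ and the explicit homotopy formulas in Definition~\ref{def:p-null-homo}. The essential point is that a morphism factoring through $\theta^i(P^i)$ is governed by a single $A$-module map, and that the homotopy data $h^0,\dots,h^{n-1}$ are exactly the "coordinates" of such factorizations.

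First I would prove the ``if'' direction. Suppose $f:X\to Y$ factors as $X\xrightarrow{u}\bigoplus_{i=0}^{n-1}\theta^i(P^i)\xrightarrow{v}Y$ with each $P^i$ projective. By additivity it suffices to treat a single summand $\theta^i(P)$ with $P$ projective. So let $f=vu$ with $u:X\to\theta^i(P)$ and $v:\theta^i(P)\to Y$. Writing out what a morphism into $\theta^i(P)$ and out of $\theta^i(P)$ looks like componentwise (using that all but one differential of $\theta^i(P)$ is an identity and the remaining one is $\omega_{(-)}$, located in position $i-1$), one reads off that the composite $f^j=v^ju^j$ in each degree is forced to have precisely the shape appearing on the right-hand side of the formulas in Definition~\ref{def:p-null-homo}. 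Concretely, the commutativity of the defining squares for $u$ and $v$ lets one express $u^j$ in terms of $u^i$ (via the identity differentials) and $v^j$ in terms of $v^i$, and then the single map $h^i:=$ (the relevant component of $u$ composed with the relevant component of $v$, shifted by $\sigma^{\pm1}$ as dictated by the twist in $\theta^i$) factors through the projective $P$. Setting the other $h^k$ to be built analogously from the other summands $\theta^k(P^k)$ gives the required homotopy; one then checks the formulas hold by substituting. This is routine but notation-heavy.

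For the ``only if'' direction, suppose $f=(f^0,\dots,f^{n-1})$ is p-null-homotopical with homotopy maps $h^0,\dots,h^{n-1}$, where each $h^k$ factors as $X^k\xrightarrow{a^k}P^k\xrightarrow{b^k}(\text{target of }h^k)$ through a projective $A$-module $P^k$. For each $k$ I would construct a morphism $X\to\theta^k(P^k)$ and a morphism $\theta^k(P^k)\to Y$ whose composite is the ``$k$-th piece'' of $f$, i.e. the part of the formulas for $f^0,\dots,f^{n-1}$ involving $h^k$. The map $X\to\theta^k(P^k)$ is assembled from $a^k$ and the differentials $d_X^{\ast}$ (propagated along the identity legs of $\theta^k$), and the map $\theta^k(P^k)\to Y$ from $b^k$ and the differentials $d_Y^{\ast}$; the twist in $\theta^k$ by $\sigma^{-1}$ in the first $k$ slots is exactly matched by the $\sigma$-twists appearing in the homotopy formulas. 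Commutativity of the relevant squares reduces to the identities $d_X^{0,n-1}=\omega_{X^0}$ etc.\ defining an $n$-fold module factorization, together with $\omega a^k=\sigma(\ldots)$-type naturality of $\omega$. Summing over $k=0,\dots,n-1$ and adding these factorizations yields a factorization of $f$ through $\bigoplus_{k=0}^{n-1}\theta^k(P^k)$.

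The main obstacle is purely organizational: keeping the indices, the nested compositions $d^{i,j}$, and the $\sigma^{\pm1}$-twists consistent between the general homotopy formula and the explicit componentwise description of maps into and out of $\theta^k(P^k)$. There is no conceptual difficulty once one fixes, for each $k$, the precise degree in which the ``$\omega$-leg'' of $\theta^k(P^k)$ sits and tracks how the identity legs transport a single map to all degrees; the verification then collapses to the defining equations of $n$-fold module factorizations. I would isolate the single-summand computation as the crux and state the general case as a direct sum of these. For orientation one can sanity-check against the case $n=2$, which recovers \cite[Lemma 4.2]{Chen24}.
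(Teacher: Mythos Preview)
Your proposal is correct and follows essentially the same strategy as the paper's proof: both arguments reduce to a single summand $\theta^i(P^i)$, explicitly build the morphisms $X\to\theta^i(P^i)\to Y$ out of the factorization $h^{i-1}=\nu\mu$ together with the differentials $d_X^{\bullet}$ and $d_Y^{\bullet}$, verify that the composite recovers exactly the $h^{i-1}$-part of the homotopy formula, and then sum over $i$. The paper writes the ``only if'' direction first and obtains the ``if'' direction by reversing the construction, while you present the two directions in the opposite order, but the content is the same.
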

\begin{proof}
For the ``only if" part, we assume there are the following decompositions
\begin{align*}
    h^i:& 
   {\xymatrix{
   X^i\ar[r]^{\mu^{i}\quad}&{^{\sigma^{-1}}(P^{i+1})}\ar[r]^{^{\sigma^{-1}}(\nu^{i})\quad\quad\quad\quad}&{^{\sigma^{-1}}(Y^{i+1})},~i=0,1,\cdots,n-2, 
   }} 
    \\
    h^{n-1}: &
 {\xymatrix{
X^{n-1}\ar[r]^{\mu^{n-1}}&{P^{0}}\ar[r]^{\nu^{n-1}}&Y^{n-1}.
 }}
\end{align*}
We have the following commutative diagram for each $\theta^{i}(P^i)$
$$\xymatrix{
X\ar[d]&X^0\ar[r]^{d_X^0}\ar[d]_{\mu^{i-1}d_X^{0,i-2}}&
\cdots\ar[r]&
X^{i-1}\ar[r]^{d_X^{i-1}}\ar[d]_{\mu^{i-1}}&
X^{i}\ar[r]^{d_X^{i}}\ar[d]|{^{\sigma}(\mu^{i-1}d_X^{0,i-2})d_X^{i,n-1}}&
X^{i+1}\ar[r]^{d_X^{i+1}}\ar[d]&
\cdots
\\
\theta^{i}(P^i)\ar[d]&^{\sigma^{-1}}(P^{i})\ar[r]^{\id}\ar[d]|{{^{\sigma^{-1}}}(d_Y^{i, n-1}\nu^i)}&
\cdots\ar[r]^{\id}&
{^{\sigma^{-1}}(P^{i})}\ar[r]^{\omega_{^{\sigma^{-1}}(P^i)}}\ar[d]|{d_Y^{0,i-2}{^{\sigma^{-1}}}(d_Y^{i,n-1}\nu^{i-1})}&
P^{i}\ar[r]^{\id}\ar[d]^{\nu^{i-1}}&
P^{i}\ar[r]^{\id}\ar[d]&
\cdots
\\
Y&Y^0\ar[r]^{d_Y^0}&
\cdots\ar[r]&
Y^{i-1}\ar[r]^{d_Y^{i-1}}&
Y^{i}\ar[r]^{d_Y^{i}}&
Y^{i+1}\ar[r]^{d_Y^{i+1}}&
\cdots
}
$$

One can verify that $(f^0, f^1,\cdots, f^{n-1})$ equals the sum of the aforementioned decompositions, hence it 
factors through $\theta^0(P^0)\oplus\theta^1(P^1)\oplus\cdots\oplus\theta^{n-1}(P^{n-1})$. By reversing the earlier argument, one can prove the ``if" part.
\end{proof}

\section{$N$-module factorizations as modules}\label{sec:module factorizations}
In this section, we realize the category of $n$-fold module factorizations as a module category of a matrix subring. Furthermore, we provide a characterization of Gorenstein projective module factorizations.

\subsection{$N$-fold module factorizations as modules}
We define a subring of $M_{n}(A)$
$$\Gamma_n=\begin{pmatrix}
    A&A&\cdots&A\\
    A\omega& A&\cdots&A\\
    \vdots&\vdots&\ddots&\vdots\\
    A\omega&A\omega&\cdots&A
\end{pmatrix}$$
and define functors
\begin{align*}
    \Phi:& \F_n(A;\omega)\longrightarrow \Gamma_n\Mod \\
         &  X \longmapsto (X^{n-1}, X^{n-2},\cdots, X^0)^T_{(f_{ij})}
\end{align*}
where \[ f_{ij} =\begin{dcases}
d_{X}^{n-j,n-i-1}, & \text{if}~ 1\leq i<j\leq n;\\
d_X^{0,i-1}~{^{\sigma^{-1}}(d_X^{n-j,n-1})}, &\text{if}~ 1\leq j<i\leq n .
\end{dcases} \]
and 
\begin{align*}
    \Psi:  \Gamma_n\Mod &\longrightarrow \F_n(A;\omega) \\
           (X^{n-1}, X^{n-2},\cdots, X^0)^T_{(f_{ij})}&\longmapsto (X^0\stackrel{f_{n-1,n}}{\to}X^1\stackrel{f_{n-2,n-1}}{\to}\cdots\to X^{n-1}\stackrel{^{\sigma}(f_{n, 1})}{\to}{^{\sigma}(X^0)})
\end{align*}
Here, we identify the $A$-module $A\omega\otimes_{A}M$ with $^{\sigma^{-1}}(M)$ via the isomorphism
$$a\omega\otimes m\mapsto ~^{\sigma^{-1}}(^{\sigma^{-1}}(a)m).$$

\begin{prop}\label{prop:module cat}
The functor $\Phi:\F_n(A;\omega)\longrightarrow \Gamma_n\Mod$ is an equivalence, and $\Psi$ is a quasi-inverse of it. In particular, the projective objects of $\F_n(A;\omega)$ are precisely direct summands of $\theta^0(P^0)\oplus\theta^1(P^1)\oplus\cdots\oplus\theta^{n-1}(P^{n-1})$ for projective $A$-modules $P^0,P^1,\cdots,P^{n-1}$.
\end{prop}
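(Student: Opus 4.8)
The plan is to establish the equivalence $\Phi \simeq \Psi$ directly, and then deduce the description of the projectives as a formal consequence. First I would unwind the structure of a $\Gamma_n$-module. Since $\Gamma_n$ is a subring of $M_n(A)$, a left $\Gamma_n$-module is in particular a column vector $(Y_1, \dots, Y_n)^T$ of abelian groups together with an action; writing $e_{ii}$ for the diagonal idempotents, each $Y_i = e_{ii}Y$ is a left $A$-module (the $(i,i)$-entry is $A$), and the off-diagonal entries supply $A$-linear maps $Y_j \to Y_i$. Crucially, the entries below the diagonal are $A\omega$ rather than $A$, so a map coming from position $(i,j)$ with $i>j$ is "divisible by $\omega$" — under the identification $A\omega \otimes_A M \cong {}^{\sigma^{-1}}(M)$ fixed in the text, it corresponds to an honest $A$-map $Y_j \to {}^{\sigma^{-1}}(Y_i)$. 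I would record the multiplication rules of $\Gamma_n$ (in particular $\omega \cdot (\text{entry above diagonal})$ lands in the lower triangle, and the composite of a lower map with an upper map reproduces multiplication by $\omega$) and check that these are exactly the relations needed so that, after relabeling $Y_i = X^{n-i}$, the data $(f_{ij})$ assemble into a sequence $X^0 \to X^1 \to \cdots \to X^{n-1} \to {}^\sigma(X^0)$ whose every $n$-fold cyclic composite equals $\omega$. This is the content of Definition~\ref{def:n-mod-fac}, so it shows $\Psi$ is well defined; the reverse bookkeeping shows $\Phi$ is well defined on objects, and both are patently functorial on morphisms since a morphism of $\Gamma_n$-modules is a column of $A$-maps commuting with all $f_{ij}$, which is precisely a morphism of $n$-fold factorizations.

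Next I would verify $\Psi\Phi \cong \id$ and $\Phi\Psi \cong \id$. Both are essentially identities once the indexing conventions are pinned down: starting from $X = (X^i, d_X^i)$, the matrix $\Phi(X)$ has superdiagonal entries $f_{i,i+1} = d_X^{n-i-1}$ and the remaining entries are forced composites (the formula $f_{ij} = d_X^{n-j,\,n-i-1}$ for $i<j$ and $f_{ij} = d_X^{0,i-1}\,{}^{\sigma^{-1}}(d_X^{n-j,n-1})$ for $j<i$), and applying $\Psi$ reads off the superdiagonal again, returning $X$ on the nose up to the tensor identification. The only genuine thing to check is consistency: that the "redundant" entries $f_{ij}$ with $|i-j|\ge 2$ computed by $\Psi$ from a general $\Gamma_n$-module agree with the composites of superdiagonal entries — this is exactly where the module axioms $f_{ik} = f_{jk} f_{ij}$ (reading off products $e_{ij}\cdot e_{jk}$ in $\Gamma_n$) get used, together with the relation that going "all the way around" multiplies by $\omega$. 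I expect this reindexed cross-checking — matching $f_{ij}$ with the correct partial composite $d_X^{\bullet,\bullet}$, keeping track of where ${}^{\sigma^{-1}}$ and ${}^\sigma$ appear — to be the main obstacle: not conceptually hard, but the one place where an off-by-one in the labeling $Y_i \leftrightarrow X^{n-i}$ or a misplaced twist by $\sigma$ would break the argument. I would handle it by checking the rank-one cases ($\Gamma_2$, $\Gamma_3$) explicitly to fix conventions, then write the general verification.

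For the statement about projectives: $\Gamma_n\Mod$ has enough projectives and its indecomposable projectives are the summands of the regular module $\Gamma_n = \bigoplus_{i=1}^n \Gamma_n e_{ii}$, so projective $\Gamma_n$-modules are exactly direct summands of modules of the form $\bigoplus_i \Gamma_n e_{ii} \otimes_A P$ with $P$ projective over $A$ — more precisely, the projective generators are $\Gamma_n e_{ii}\otimes_A(-)$ applied to $A$-projectives. Transporting along $\Phi$ (which, being an equivalence, preserves and reflects projectivity), it suffices to compute $\Psi(\Gamma_n e_{ii})$, i.e.\ to identify the $n$-fold factorization corresponding to the $i$-th column of $\Gamma_n$. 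The $i$-th column has $A$ in rows $1,\dots,i$ and $A\omega$ in rows $i+1,\dots,n$; reading it off through $\Psi$ and the identification $A\omega\cong{}^{\sigma^{-1}}(A)$ yields precisely a trivial factorization $\theta^{k}(A)$ for the appropriate $k = k(i)$ (a string of identity maps with a single multiplication-by-$\omega$ placed where the column switches from $A$ to $A\omega$). Tensoring with an arbitrary projective $A$-module $P$ and using that $\theta^k(-)$ commutes with the relevant base change gives $\Psi(\Gamma_n e_{ii}\otimes_A P)\cong \theta^{k(i)}(P)$. Hence the projective objects of $\F_n(A;\omega)$ are exactly the direct summands of finite sums $\theta^0(P^0)\oplus\cdots\oplus\theta^{n-1}(P^{n-1})$, as claimed; this also dovetails with Lemma~\ref{lem:p-null-homo}, which already isolated these objects as the ones through which $\mathsf{p}$-null-homotopical maps factor.
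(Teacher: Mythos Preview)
Your proposal is correct and follows essentially the same line as the paper's own proof, which simply declares the natural isomorphisms $\Phi\Psi\simeq\id$ and $\Psi\Phi\simeq\id$ to be ``routine to check'' and then reads off the projective objects ``by the definition of the functor $\Psi$''. You are just making explicit what the paper leaves implicit: the idempotent decomposition of a $\Gamma_n$-module, the consistency check that the non-superdiagonal entries $f_{ij}$ are forced composites, and the identification $\Psi(\Gamma_n e_{ii})\cong\theta^{k(i)}(A)$ via the column structure of $\Gamma_n$.
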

\begin{proof}
It is routine to check the following natural isomorphisms
$$\Phi\Psi\simeq {\id}_{\Gamma_n\Mod}~\text{and}~\Psi\Phi\simeq {\id}_{\F_n(A;\omega)}.$$
By the definition of the functor $\Psi$, we obtain the projective objects of $\F_n(A;\omega)$.
\end{proof}

It is known that the category $\F_n(A;\omega)$ is an abelian category with component-wise kernels and cokernels. There is the following diagram
$$\xymatrix{
{\F_{n+1}(A;\omega)}\ar[rr]^{{\pr}_{n+1}^0}
&
&{F_{n}(A;\omega)}\ar^-{{\pr}_{n}^0}[rr]\ar^-{S^n\theta_n^0 S^{-(n-1)}}@/^1.2pc/[ll]\ar_-{\theta_n^0}@/_1.3pc/[ll]
&
&\cdots\ar^-{S^{n-1}\theta_{n-1}^0 S^{-(n-2)}}@/^1.2pc/[ll]\ar_-{\theta^0_{n-1}}@/_1.3pc/[ll]\ar^-{{\pr}_2^0}[rr]
&
&A\Mod\ar^-{S\theta^0_1}@/^1.2pc/[ll]\ar_-{\theta_1^0}@/_1.3pc/[ll]\quad (*)
}$$
Lemma~\ref{lem:adjoint pair} promises us that $(\theta_{n-1}^0\cdots\theta_1^0, {\pr}^0_2\cdots {\pr}_{n}^0)$ and $({\pr}^0_2\cdots {\pr}_{n}^0, S^{n-1}\theta^0_{n-1}\cdots\theta_1^0)$ are adjoint pairs, and so
$${\pr}^0={\pr}^0_2\cdots {\pr}_{n}^0: \F_n(A;\omega)\longrightarrow A\Mod,~X\longmapsto X^0$$
is a Frobenius functor, and so are the functors ${\pr}^i={\pr}^0 S^i, i=1, 2,\cdots, n-1$.

Refer to \cite{M65, CGN99} for Frobenius extensions and to \cite{CDM02} for their generalization.
Let $\C$ and $\D$ be two additive categories. Fix two autoequivalences $\alpha:\C\to \C$ and $\beta:\D\to\D$. Let $F:\C\to \D$ and $G:\D\to \C$ be two additive functors. The pair $(F, G)$ is a {\em Frobenius pair of type $(\alpha, \beta)$} if both $(F, G)$ and $(G, \beta F\alpha)$ are adjoint pairs. So, we have $(\theta^i,pr^i)$ is a Frobenius pair of type $(\id, S^{n-1})$ for $i=0,1,\cdots, n-1$

Define 
\begin{align*}
    \pr:=({\pr}^0, {\pr}^1,\cdots, {\pr}^{n-1}):F_n(A;\omega)&\longrightarrow A\Mod\times A\Mod\times\cdots A\Mod \\
    X&\longmapsto ({\pr}^0(X), {\pr}^1(X),\cdots, {\pr}^{n-1}(X))
\end{align*}
and 
\begin{align*}
    \theta:=\theta^0+\theta^1+\cdots+\theta^{n-1}:A\Mod&\times A\Mod\times\cdots A\Mod\longrightarrow F_n(A;\omega) \\
    (M_1,M_2,\cdots, M_n)&\longmapsto \theta^0{(M_1)}\oplus \theta^1{(M_2)}\oplus\cdots\oplus \theta^{n-1}{(M_n)}
\end{align*}

\begin{prop}\label{prop:faithful frobenius}
The functor
    $$\pr:\F_n(A;\omega)\longrightarrow A\Mod\times A\Mod\times\cdots A\Mod$$
is a faithful Frobenius functor.
\end{prop}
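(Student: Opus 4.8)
The plan is to treat the two assertions separately. \textbf{Faithfulness} is immediate from the definitions: a morphism $f\colon X\to Y$ in $\F_n(A;\omega)$ \emph{is} a tuple $(f^0,f^1,\dots,f^{n-1})$ of $A$-module homomorphisms, and $\pr(f)=(\pr^0(f),\dots,\pr^{n-1}(f))=(f^0,\dots,f^{n-1})$; hence $\pr(f)=0$ already forces $f=0$. So the real content of the proposition is the \textbf{Frobenius} property, and here I would simply assemble, componentwise, the adjunctions recorded just before the statement, where it is noted that for each $i$ the pair $(\theta^i,\pr^i)$ is a Frobenius pair of type $(\id_{A\Mod},S^{n-1})$ — that is, both $(\theta^i,\pr^i)$ and $(\pr^i,S^{n-1}\theta^i)$ are adjoint pairs.

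First I would check that $\theta=\theta^0+\cdots+\theta^{n-1}$ is left adjoint to $\pr$. Write $\underline M=(M_1,\dots,M_n)$. Since $\F_n(A;\omega)$ is abelian, finite direct sums are also coproducts there, so $\Hom_{\F_n(A;\omega)}(\theta(\underline M),X)\cong\prod_{i=0}^{n-1}\Hom_{\F_n(A;\omega)}(\theta^i(M_{i+1}),X)$, and applying each adjunction $(\theta^i,\pr^i)$ turns this into $\prod_{i=0}^{n-1}\Hom_{A\Mod}(M_{i+1},\pr^i(X))$, which is precisely $\Hom(\underline M,\pr(X))$ in the product category; naturality is clear. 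Dually, using the adjunctions $(\pr^i,S^{n-1}\theta^i)$ and the fact that finite direct sums in $\F_n(A;\omega)$ are also products, I get $\Hom(\pr(X),\underline M)\cong\prod_{i=0}^{n-1}\Hom_{\F_n(A;\omega)}(X,S^{n-1}\theta^i(M_{i+1}))\cong\Hom_{\F_n(A;\omega)}\!\bigl(X,\bigoplus_{i=0}^{n-1}S^{n-1}\theta^i(M_{i+1})\bigr)$. Because $S$, hence $S^{n-1}$, is an autoequivalence of $\F_n(A;\omega)$, it is additive and therefore commutes with the finite biproduct, so $\bigoplus_i S^{n-1}\theta^i(M_{i+1})=S^{n-1}\theta(\underline M)$. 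Thus $(\pr,S^{n-1}\theta)$ is an adjoint pair as well, which means $(\theta,\pr)$ is a Frobenius pair of type $(\id,S^{n-1})$ and $\pr$ is a Frobenius functor.

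The computation is entirely routine, so I do not expect a real obstacle; the one place to be attentive is the last step, namely that the right adjoint of the assembled functor is genuinely $S^{n-1}\circ\theta$ rather than merely ``$\theta$ followed by $S^{n-1}$ in each coordinate''. This is exactly where one uses that $S^{n-1}$ is an autoequivalence, so that it can be pulled out past the biproduct $\bigoplus_i\theta^i(M_{i+1})$; without this observation one would not recognise the right adjoint as a twist of the left adjoint by autoequivalences, and hence could not conclude that $\pr$ sits in a Frobenius pair in the sense defined above.
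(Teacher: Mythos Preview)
Your proof is correct and follows essentially the same approach as the paper: the paper simply cites \cite[Lemma~3.7]{Chen24} to conclude that $(\theta,\pr)$ is a Frobenius pair of type $(\id,S^{n-1})$, whereas you have spelled out that assembly argument explicitly by combining the componentwise adjunctions $(\theta^i,\pr^i)$ and $(\pr^i,S^{n-1}\theta^i)$ and using additivity of $S^{n-1}$. The faithfulness argument is identical in both.
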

\begin{proof}
According to \cite[Lemma 3.7]{Chen24}, the pair $(\theta, \pr)$ is a Frobenius pair of type $(\id,S^{n-1})$. And it is obvious that $\pr$ is faithful.
\end{proof}

The following lemma indicates that a faithful Frobenius functor preserves Gorensteinn projective objects.

\begin{lem}{\textnormal{(\cite[Theorem 3.2]{CR22})}}\label{lem:frobenius-gorenstein}
  Let $F:\A\to\C$ be a faithful Frobenius functor between two abelian categories with enough projective objects. Then for each $X\in\A$, $X\in\GP(A)$ if and only if $F(X)\in\GP(\C)$.
\end{lem}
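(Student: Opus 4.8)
The plan is to show that the faithful Frobenius functor $F$ both preserves and reflects total acyclicity of complexes of projective objects, from which the equivalence of the two Gorenstein projectivity conditions follows at once. Write $G$ for an adjoint of $F$, so that $(F,G)$ and $(G,\beta F\alpha)$ are adjoint pairs for some autoequivalences $\alpha$ of $\A$ and $\beta$ of $\C$; autoequivalences preserve exactness and projectivity, so I suppress $\alpha,\beta$ below. Having both a left and a right adjoint, $F$ and $G$ are exact; being left adjoints of exact functors, both $F$ and $G$ preserve projective objects. Since $F$ is faithful, the unit $\id_\A\to GF$ is a pointwise monomorphism and the counit $GF\to\id_\A$ of the adjunction with $F$ on the right is a pointwise epimorphism; the latter splits when evaluated at a projective, so \emph{every projective object of $\A$ is a direct summand of $G(Q)$ for some projective $Q$ of $\C$}. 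Finally, exactness of $F$ and $G$ yields natural isomorphisms $\Hom_\C(F(-),Q)\cong\Hom_\A(-,G(Q))$, $\Hom_\A(G(-),P)\cong\Hom_\C(-,F(P))$ and $\Ext^i_\C(F(-),Q)\cong\Ext^i_\A(-,G(Q))$, and $F,G$ commute with kernels and cokernels; these are the tools for transporting total acyclicity across $F$.

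For the ``only if'' part, let $X\in\GP(\A)$ and choose a totally acyclic complex $\cpx P$ of projectives of $\A$ with $X\cong Z^0(\cpx P)$. Then $F(\cpx P)$ is an exact complex of projectives of $\C$ with $F(X)\cong Z^0(F(\cpx P))$, and for each projective $Q$ of $\C$ the complex $\Hom_\C(F(\cpx P),Q)\cong\Hom_\A(\cpx P,G(Q))$ is acyclic because $G(Q)$ is projective and $\cpx P$ is totally acyclic. Hence $F(\cpx P)$ is totally acyclic and $F(X)\in\GP(\C)$.

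For the ``if'' part assume $F(X)\in\GP(\C)$; I build a complete projective resolution of $X$ in $\A$. A projective resolution of $X$ in $\A$ gives the ``left half''. For the ``right half'' I argue inductively: given an $A$-module $C$ with $F(C)\in\GP(\C)$, choose a complete resolution of $F(C)$ in $\C$ with degree-zero term $T$, so that $F(C)\hookrightarrow T$ has Gorenstein projective cokernel and $\Hom_\C(T,Q)\twoheadrightarrow\Hom_\C(F(C),Q)$ for every projective $Q$. Composing the unit monomorphism $C\hookrightarrow GF(C)$ with $G$ applied to $F(C)\hookrightarrow T$ embeds $C$ into the projective $G(T)$ of $\A$; let $C'$ be the cokernel. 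Applying $F$ and using naturality of the counit $FG\to\id_\C$ together with the triangle identity, the map $F(C)\hookrightarrow F(G(T))$ retracts onto $F(C)\hookrightarrow T$, so every morphism $F(C)\to Q$ with $Q$ projective extends along $F(C)\hookrightarrow F(G(T))$; hence $\Ext^1_\C(F(C'),Q)=0$, and since $F(C)$ and $F(G(T))$ are Gorenstein projective we get $F(C')\in\GP(\C)$ and exactness of $0\to\Hom_\C(F(C'),Q)\to\Hom_\C(F(G(T)),Q)\to\Hom_\C(F(C),Q)\to 0$. Iterating from $C=X$ produces an exact sequence $0\to X\to \Pi^0\to \Pi^1\to\cdots$ of projectives of $\A$ whose $F$-image stays exact after $\Hom_\C(-,Q)$; splicing with the left projective resolution gives an exact complex $\cpx\Pi$ of projectives of $\A$ with $Z^0(\cpx\Pi)\cong X$. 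Since every projective of $\A$ is a summand of some $G(Q)$, it suffices to verify that $\Hom_\A(\cpx\Pi,G(Q))\cong\Hom_\C(F(\cpx\Pi),Q)$ is acyclic; this follows from the construction of the right half together with $\Ext^{>0}_\C(F(X),Q)=0$ for the left half. Therefore $X\cong Z^0(\cpx\Pi)\in\GP(\A)$.

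The main obstacle is exactly the construction of the right half of the complete resolution inside $\A$: an arbitrary embedding of a cosyzygy of $X$ into a projective of $\A$ need not have Gorenstein projective cokernel after applying $F$, and the crux is to transport the ``good'' embeddings coming from complete resolutions in $\C$ via $G$ and to check the required $\Ext^1$-vanishing through the triangle identity relating the unit and counit of the two adjunctions. The argument also uses the standard closure properties of Gorenstein projective objects valid in any abelian category with enough projectives — closure under direct summands, and the fact that in a short exact sequence whose middle and outer terms are Gorenstein projective the remaining term is Gorenstein projective once it has vanishing $\Ext^1$ into projectives.
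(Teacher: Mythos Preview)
The paper does not give its own proof of this lemma; it simply quotes \cite[Theorem~3.2]{CR22} and moves on. Your argument is correct and is essentially the standard one: the forward implication is immediate from the adjunction $\Hom_\C(F(-),Q)\cong\Hom_\A(-,G(Q))$, and for the converse you correctly transport a right projective coresolution from $\C$ back to $\A$ via the unit $C\hookrightarrow GF(C)\hookrightarrow G(T)$, using the triangle identity to verify that $F(C)\hookrightarrow FG(T)$ retracts onto $F(C)\hookrightarrow T$ and hence that $\Ext^1_\C(F(C'),Q)=0$. The closure fact you invoke at the end---that in a short exact sequence $0\to A\to P\to C\to 0$ with $A$ Gorenstein projective, $P$ projective, and $\Ext^1(C,Q)=0$ for every projective $Q$, the cokernel $C$ is again Gorenstein projective---follows from a Schanuel-type comparison with the complete resolution of $A$ together with closure of $\GP$ under direct summands, so the appeal is legitimate. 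One small remark: the statement ``every projective of $\A$ is a summand of some $G(Q)$'' really uses the \emph{second} adjunction $(G,\beta F\alpha)$ and the fact that $\beta F\alpha$ inherits faithfulness from $F$; this is implicit once you suppress $\alpha,\beta$, but worth making explicit.
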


\begin{prop}\label{prop:projs}
    The subcategory $\GF_n(A;\omega)$ coincides with $\GP(\F_n(A;\omega))$, and is equivalent to $\Gamma_n\GProj$ as an exact category. In particular, the exact category $\GF_n(A;\omega)$ is Frobenius with projective-injective objects being direct summands of $\theta^0(P^0)\oplus\theta^1(P^1)\oplus\cdots\oplus\theta^{n-1}(P^{n-1})$ for some projective $A$-modules $P^0,P^1,\cdots,P^{n-1}$.
\end{prop}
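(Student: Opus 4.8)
The plan is to deduce the statement from the tools already assembled in this section, reducing everything to three assertions: (i) $\GF_n(A;\omega)=\GP(\F_n(A;\omega))$; (ii) the equivalence $\Phi$ restricts to an equivalence $\GF_n(A;\omega)\simeq\Gamma_n\GProj$ of exact categories; and (iii) the resulting exact category is Frobenius with the stated projective-injective objects.

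For (i), I would invoke Proposition~\ref{prop:faithful frobenius}: the functor $\pr=({\pr}^0,\dots,{\pr}^{n-1}):\F_n(A;\omega)\to A\Mod\times\cdots\times A\Mod$ is a faithful Frobenius functor between abelian categories with enough projectives (the target being a finite product of module categories, so it certainly has enough projectives; the source is abelian with enough projectives by Proposition~\ref{prop:module cat}, since $\Gamma_n\Mod$ does). By Lemma~\ref{lem:frobenius-gorenstein}, an object $X\in\F_n(A;\omega)$ lies in $\GP(\F_n(A;\omega))$ if and only if $\pr(X)=(X^0,X^1,\dots,X^{n-1})$ lies in $\GP(A\Mod\times\cdots\times A\Mod)$; and Gorenstein projectivity in a finite product category is computed componentwise, so this is equivalent to each $X^i$ being Gorenstein projective, i.e.\ to $X\in\GF_n(A;\omega)$. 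This is the cleanest route and the one I expect the authors use.

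For (ii), the equivalence $\Phi:\F_n(A;\omega)\xrightarrow{\sim}\Gamma_n\Mod$ of Proposition~\ref{prop:module cat} is exact (being an equivalence of abelian categories), hence sends $\GP(\F_n(A;\omega))$ bijectively onto $\GP(\Gamma_n\Mod)=\Gamma_n\GProj$. Combined with (i) this gives $\GF_n(A;\omega)\simeq\Gamma_n\GProj$; that it is an equivalence of exact categories is automatic because both sides carry the exact structure inherited from the ambient abelian category and $\Phi$ respects it. For (iii), the Frobenius property follows from the general fact that $\GP(\B)$ is a Frobenius exact category for any abelian $\B$ with enough projectives, its projective-injective objects being exactly $\proj\B$; applied to $\B=\F_n(A;\omega)$ this identifies the projective-injectives of $\GF_n(A;\omega)$ with $\proj\F_n(A;\omega)$, which by the last sentence of Proposition~\ref{prop:module cat} are precisely the direct summands of $\theta^0(P^0)\oplus\cdots\oplus\theta^{n-1}(P^{n-1})$ for projective $A$-modules $P^i$. (Alternatively one transports the projectives of $\Gamma_n\Mod$ through $\Psi$ and matches them with these trivial factorizations.)

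The only genuine point requiring care — and the step I would flag as the main obstacle — is verifying the hypotheses of Lemma~\ref{lem:frobenius-gorenstein} and the componentwise description of Gorenstein projectives in a product category; both are standard but deserve an explicit sentence, since the product $A\Mod\times\cdots\times A\Mod$ is where ``enough projectives'' and ``Gorenstein projective = componentwise Gorenstein projective'' must be checked. Everything else is formal transport of structure along the equivalences $\Phi$, $\Psi$ and a citation of the well-known fact that $\GP$ of an abelian category is Frobenius. I would therefore write the proof as: apply Lemma~\ref{lem:frobenius-gorenstein} via Proposition~\ref{prop:faithful frobenius} to get the first identification; transport along $\Phi$ of Proposition~\ref{prop:module cat} to get the exact equivalence with $\Gamma_n\GProj$; and conclude the Frobenius statement together with the description of projective-injectives from Proposition~\ref{prop:module cat}.
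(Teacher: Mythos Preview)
Your proposal is correct and follows essentially the same approach as the paper: the authors likewise apply Lemma~\ref{lem:frobenius-gorenstein} to the faithful Frobenius functor $\pr$ of Proposition~\ref{prop:faithful frobenius} to identify $\GF_n(A;\omega)$ with $\GP(\F_n(A;\omega))$, and then invoke Proposition~\ref{prop:module cat} to obtain the equivalence with $\Gamma_n\GProj$ and the description of the projective-injective objects. Your write-up is in fact more explicit than the paper's about the componentwise description of Gorenstein projectives in the product category and about why the Frobenius property follows, but the underlying argument is the same.
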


\begin{proof}
Let $X\in \F_n(A;\omega)$, according to Proposition~\ref{prop:faithful frobenius} and Lemma~\ref{lem:frobenius-gorenstein}, $X$ is Gorenstein projective if and only if its components are Gorenstein projective $A$-modules. And the projective-injective objects are precisely the objects of $\F_n(A;\omega)$, by Proposition~\ref{prop:module cat}, we prove the statement.
\end{proof}

According to Lemma~\ref{lem:p-null-homo}, we know that the homotopy catgeory $\H(\GF_n(A;\omega))$ coincides with the stable category $\underline{\GF}_n(A;\omega)$. 
The following corollary follows directly from 
Proposition~\ref{prop:projs}.

\begin{cor}\label{cor:Gorenstein stable}
    There is a triangle equivalence
    $$\underline{\GF}_n(A;\omega)\longrightarrow \Gamma_n\underline{\GProj}.$$
\end{cor}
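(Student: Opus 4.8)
The plan is to assemble the triangle equivalence from the pieces already in place. By Proposition~\ref{prop:projs}, the functor $\Phi$ restricts to an equivalence of exact categories $\GF_n(A;\omega)\xrightarrow{\sim}\Gamma_n\GProj$ which identifies the projective-injective objects on both sides (direct summands of $\theta^0(P^0)\oplus\cdots\oplus\theta^{n-1}(P^{n-1})$ on the left, ordinary projective $\Gamma_n$-modules on the right). First I would recall that for a Frobenius exact category, the stable category modulo projective-injectives is triangulated (Happel), and an exact equivalence that sends projective-injectives to projective-injectives descends to a triangle equivalence between the stable categories. So $\Phi$ induces a triangle equivalence $\underline{\GF}_n(A;\omega)\to\Gamma_n\underline{\GProj}$ at the level of stable-modulo-projective categories.

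The one point that needs care is reconciling the two a priori different notions of ``stable category'' on the factorization side: the category $\underline{\GF}_n(A;\omega)$ as defined in the paper is the homotopy category $\H(\GF_n(A;\omega))=\GF_n(A;\omega)/\!\sim$ with respect to p-null-homotopical morphisms, whereas the Frobenius-category construction produces the quotient by morphisms factoring through projective-injective objects. Here Lemma~\ref{lem:p-null-homo} does exactly the needed work: a morphism is p-null-homotopical if and only if it factors through an object of the form $\theta^0(P^0)\oplus\cdots\oplus\theta^{n-1}(P^{n-1})$, i.e.\ through a projective-injective object of the Frobenius category $\GF_n(A;\omega)$. Hence the two quotients literally coincide, and the remark preceding the corollary (``the homotopy category $\H(\GF_n(A;\omega))$ coincides with the stable category $\underline{\GF}_n(A;\omega)$'') records precisely this identification. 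So I would state this identification explicitly, cite Lemma~\ref{lem:p-null-homo} and Proposition~\ref{prop:projs}, and then invoke Happel's theorem.

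Concretely the steps are: (i) invoke Proposition~\ref{prop:projs} to get that $\GF_n(A;\omega)\simeq\Gamma_n\Gproj$... wait, $\Gamma_n\GProj$ as Frobenius exact categories with matching projective-injectives; (ii) invoke Lemma~\ref{lem:p-null-homo} together with Proposition~\ref{prop:module cat} to identify $\H(\GF_n(A;\omega))=\underline{\GF}_n(A;\omega)$ with the Frobenius stable category of $\GF_n(A;\omega)$; (iii) apply the standard fact that an exact equivalence of Frobenius categories preserving projective-injectives induces a triangle equivalence of stable categories (Happel), applied to the equivalence of (i), to produce $\underline{\GF}_n(A;\omega)\xrightarrow{\sim}\Gamma_n\underline{\GProj}$.

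I do not anticipate a serious obstacle here; the content is bookkeeping. The only mildly delicate point is the compatibility of the triangulated structures: one must check that the suspension functor on $\underline{\GF}_n(A;\omega)$ coming from the Frobenius structure (cosyzygy with respect to conflations whose terms have projective-injective middle term) matches the one transported from $\Gamma_n\underline{\GProj}$, but this is automatic because $\Phi$ is an exact equivalence carrying conflations to conflations and projective-injectives to projective-injectives, so it commutes with the chosen cosyzygy constructions up to natural isomorphism. Thus the corollary follows formally from the results already established.
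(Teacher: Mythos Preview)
Your proposal is correct and follows essentially the same approach as the paper: the paper simply notes that Lemma~\ref{lem:p-null-homo} identifies the homotopy category with the Frobenius stable category and then states that Proposition~\ref{prop:projs} implies the corollary directly, which is exactly the argument you spell out in more detail (with the explicit invocation of Happel's theorem). There is no discrepancy in method or content.
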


\section{The Cokernel functors}\label{sec:cokernel}
In this section, we introduce a cokernel functor from $\F_n(A;\omega)$ to the module category of an $(n-1)\times (n-1)$ upper-triangular matrix ring of $\bar{A}$. By leveraging the characterization of Gorenstein projective objects within this upper-triangular matrix ring as described in the monomorphism category, we establish a non-commutative analog of Eisenbud's correspondence for $n$-fold module factorizations.

Define an $(n-1)\times (n-1)$ upper-triangular matrix ring
$$\Lambda_{n-1}=\begin{pmatrix}
    \bar{A}&\bar{A}&\cdots &\bar{A}\\
    0 &\bar{A}&\cdots&\bar{A}\\
    \vdots&\vdots&\ddots&\vdots\\
    0&0&\cdots&\bar{A}
\end{pmatrix}$$

Denote $\S_{n-1}(\bar{A}\GProj)$ as the monomorphism category (\cite{LZ13}) of $\bar{A}\GProj$, which consists of objects with $(n-2)$ composable monomorphisms:
$$\xymatrix{
X^0\ar[r]^{f^0}& X^1\ar[r]^{f^1}& \cdots\ar[r]^{f^{n-3}}& X^{n-2}
}$$
where $X^i\in \bar{A}\GProj$ and $\Cok(f^i)\in \bar{A}\GProj$. The following result is well-known in the context of finitely generated modules over an Artin algebra \cite{LZ13}. In fact, the proof can be easily extended to infinitely generated modules over any ring (see, for example, \cite{LZHZ20}).

\begin{thm}{\textnormal{(\cite[Theorem 4.1]{LZ13})}}\label{thm:GProj over matrix algebra}
 The category of Gorenstein projective $\Lambda_{n-1}$-module is equivalent to the monomorphism category $\S_{n-1}(\bar{A}\GProj)$.
\end{thm}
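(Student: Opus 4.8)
The plan is to reduce the statement to the structural description of $\Lambda_{n-1}$-modules as ``representations of the $A_{n-1}$-quiver with relations'' and then feed this into the existing machinery of Luo--Zhang. First I would recall that for the upper-triangular matrix ring $\Lambda_{n-1}$ with all entries $\bar A$, a left $\Lambda_{n-1}$-module is the same datum as a chain of $\bar A$-modules $X^0, X^1, \dots, X^{n-2}$ together with $\bar A$-morphisms $f^i\colon X^i \to X^{i+1}$ (no relations, since off-diagonal entries are the full ring $\bar A$ and the chain is linear); this is the well-known equivalence $\Lambda_{n-1}\Mod \simeq \mathrm{Mor}^{(n-2)}(\bar A\Mod)$, the category of $(n-2)$-step composable morphisms of $\bar A$-modules. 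This identification is purely formal and I would state it as a preliminary observation, citing the standard references.

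Next I would identify the projective $\Lambda_{n-1}$-modules explicitly under this equivalence: the indecomposable projectives correspond to chains of the form $0 \to \cdots \to 0 \to \bar P \xrightarrow{\id} \bar P \xrightarrow{\id}\cdots\xrightarrow{\id} \bar P$ (a projective $\bar A$-module $\bar P$ placed in positions $j, j+1, \dots, n-2$ with identity maps and zeros before). With projectives pinned down, the totally acyclic complexes of projective $\Lambda_{n-1}$-modules can be analyzed degreewise. The key point I would verify is that a $\Lambda_{n-1}$-module $(X^0 \xrightarrow{f^0} \cdots \xrightarrow{f^{n-3}} X^{n-2})$ is Gorenstein projective exactly when each $f^i$ is a monomorphism, each $X^i \in \bar A\GProj$, and each $\Cok(f^i) \in \bar A\GProj$ — i.e. exactly membership in $\S_{n-1}(\bar A\GProj)$. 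This is precisely the content of \cite[Theorem 4.1]{LZ13}; since the excerpt already grants that result (and notes the extension to the non-finitely-generated setting via \cite{LZHZ20}), the proof of the present theorem is essentially an invocation of that characterization after the bookkeeping identification of module categories. Concretely: given a totally acyclic projective resolution over $\Lambda_{n-1}$, apply the three exact degeneracy/projection functors $\Lambda_{n-1}\Mod \to \bar A\Mod$ (taking the $i$-th component and the relevant cokernels) and use that each is exact and sends projectives to projectives to conclude the componentwise conditions; conversely, given an object of $\S_{n-1}(\bar A\GProj)$, splice together complete resolutions of the components and cokernels (using the horseshoe-type construction and the monomorphism structure) to build the required totally acyclic complex over $\Lambda_{n-1}$.

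The main obstacle is the converse direction in the non-finitely-generated, non-Artinian setting: constructing a totally acyclic complex of projective $\Lambda_{n-1}$-modules whose relevant syzygy is a prescribed object of $\S_{n-1}(\bar A\GProj)$. One must assemble complete projective resolutions of $X^i$ and of $\Cok(f^i)$ compatibly along the chain, and check the $\Hom(-,Q)$-acyclicity against \emph{all} projective $\Lambda_{n-1}$-modules $Q$, not merely finitely generated ones; over a general ring one cannot lean on Auslander--Reiten theory or length arguments. The remedy is exactly what the excerpt signals: the argument of \cite{LZ13} is formal enough — it uses only exactness of the component functors, their adjoints, and closure properties of $\GProj$ under extensions, kernels of epis, and cokernels of monos (the latter via a lemma of the type of Lemma~\ref{GP under syzygy}) — that it transfers verbatim, as carried out in \cite{LZHZ20}. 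So in the write-up I would keep this step short, citing those sources, and devote the bulk of the proof to making the equivalence $\Lambda_{n-1}\Mod \simeq \S_{n-1}(\bar A\Mod)$-ambient-category explicit so that the cited theorem applies on the nose.
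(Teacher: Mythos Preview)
Your proposal is correct in substance, but note that the paper gives no proof whatsoever of this theorem: it is stated purely as a citation of \cite[Theorem~4.1]{LZ13}, with the remark that the argument extends to infinitely generated modules over an arbitrary ring via \cite{LZHZ20}. Your write-up correctly identifies this and then goes further by sketching the actual content of the Luo--Zhang argument (the identification of $\Lambda_{n-1}\Mod$ with chains of $\bar A$-modules, the explicit projectives, and the two directions of the Gorenstein-projective characterization); this is all accurate and matches what one finds in the cited sources, but it is strictly more than the paper does. In short: the paper's ``proof'' is a bare citation, and your proposal is a faithful expansion of that citation.
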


Given an $n$-fold module factorization $X=(X^0,...,X^{n-1};d_X^0,...,d_X^{n-1})$, it induces a commutative diagram of morphisms among $n-1$ 2-fold module factorizations.
$$\xymatrix{
X^0\ar@{=}[d]\ar[rr]^{d_X^0}&&
X^1\ar[d]_{d_X^1}\ar[rr]^{d_X^{1,n-1}}&&
^{\sigma}(X^0)\ar@{=}[d]\\
X^0\ar@{=}[d]\ar[rr]^{d_X^{0,1}}&&
X^2\ar[d]_{d_X^2}\ar[rr]^{d_X^{2,n-1}}&&
^{\sigma}(X^0)\ar@{=}[d]\\
\vdots\ar@{=}[d]&&
\vdots\ar[d]_{d_X^{n-2}}&&
\vdots\ar@{=}[d]&&\\
X^0\ar[rr]^{d_X^{0,n-2}}&&
X^{n-1}\ar[rr]^{d_X^{n-1}}&&
^{\sigma}(X^0)
}
$$
This observation is used to define the {\em zeroth cokernel functor}
\begin{align*}
   \Cok^0:\F_n(A;\omega)&\longrightarrow \Lambda_{n-1}\Mod \\
   X&\longmapsto \begin{pmatrix}
    \Cok(d_X^{0,n-2})\\
   \Cok(d_X^{0,n-3})\\
    \vdots\\
    \Cok(d_X^0)
\end{pmatrix}
\end{align*}
The action of $\Cok^0$ on morphisms is standard.

We define the following subcategories of $\F_n(A;\omega)$ and $\GF_n(A;\omega)$
\begin{align*}
    \F^{mp}_n(A;\omega):=&\{X\in\F_n(A;\omega)\mid d_X^i~\text{is mono for}~0\leq i\leq n-2, X^{n-1}~\text{is projective}\}\\
   \G^0\F_n(A;\omega):=&\{X\in\GF_n(A;\omega)\mid X^{n-1}~\text{is projective}\}
\end{align*}
Since each Gorenstein projective $A$-module is $\omega$-torsionfree, hence $\G^{0}\F_{n}(A;\omega)\subseteq \F^{mp}_n(A;\omega)$.

The following lemma differs slightly from \cite[Lemma 2.4]{Chen24}. We need it to prove Lemma~\ref{full and dense}.

\begin{lem}{\textnormal{(Compare with \cite[Lemma 2.4]{Chen24})}}\label{2-fold}
Let $L,M,N$ be $A$-modules with $\omega_{N}=0$ and
$$\xymatrix{
0\ar[r]& L\ar[r]^{f}& M\ar[r]^{g}& N\ar[r]& 0
}$$
be an exact sequence in $A\Mod$. Then there exists a unique morphism $h: M\to {^{\sigma}(L)}$ such that $\omega_{L}=hf$ and $\omega_{M}={^{\sigma}}(f)h$.
\end{lem}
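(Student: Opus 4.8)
The plan is to construct $h$ directly from the exact sequence and then verify the two identities. First I would observe that since $\omega_N = 0$, the composite $g \circ \omega_M : M \to N$ is zero: indeed $\omega_M$ followed by $g$ equals $g$ followed by $\omega_N$ (because $\omega$ acts as a module endomorphism, and $g$ is $A$-linear, so $g\omega_M = \omega_N g = 0$). Hence $\omega_M : M \to M$ factors through $\ker g = \im f$. Since $f$ is a monomorphism identifying $L$ with $\im f$, there is a unique $A$-linear map $k : M \to L$ with $f k = \omega_M$. This $k$ is almost what we want, but the target of the desired $h$ is ${}^\sigma(L)$ rather than $L$; I would set $h := {}^\sigma(\text{something})$ after first checking that $k$ itself satisfies the "other" identity up to the twist by $\sigma$. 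Concretely, from $fk = \omega_M$ and the fact that $\omega_M f = f \omega_L$ (normality: $\omega_M \circ f = f \circ \omega_L$ as maps $L \to M$, since $f$ is $A$-linear and $\omega a = \sigma(a)\omega$ forces $\omega_M f = f\, \sigma\text{-twisted}$), one gets $f(k f) = \omega_M f = f \omega_L$, and cancelling the monomorphism $f$ on the left yields $kf = \omega_L$ — wait, this needs care because of the $\sigma$-twist, which is exactly why the statement puts ${}^\sigma(L)$ in the codomain.

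More carefully: I would recall that for a normal regular $\omega$, multiplication $\omega_M : M \to M$ is not $A$-linear but $\sigma$-semilinear; equivalently $\omega_M$ defines an $A$-linear map $M \to {}^\sigma(M)$ (or ${}^{\sigma^{-1}}(M)\to M$, depending on convention). So the correct reading is: $\omega_M$ is an $A$-morphism $M \to {}^\sigma M$, and similarly $\omega_L : L \to {}^\sigma L$. The map $\omega_M : M \to {}^\sigma M$ followed by ${}^\sigma g : {}^\sigma M \to {}^\sigma N$ is ${}^\sigma(g)\,\omega_M = \omega_N \, g = 0$ since $\omega_N = 0$. Therefore $\omega_M : M \to {}^\sigma M$ factors through $\ker({}^\sigma g) = \im({}^\sigma f)$, and since ${}^\sigma f : {}^\sigma L \to {}^\sigma M$ is a monomorphism, there is a unique $A$-morphism $h : M \to {}^\sigma L$ with $({}^\sigma f)\, h = \omega_M$. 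That is precisely the identity $\omega_M = {}^\sigma(f) h$. It remains to check $\omega_L = h f$. Precompose $({}^\sigma f) h = \omega_M$ with $f : L \to M$: we get $({}^\sigma f)(h f) = \omega_M \, f$. On the other hand $\omega_M \, f = ({}^\sigma f)\, \omega_L$ as morphisms $L \to {}^\sigma M$ — this is the naturality of the transformation $\omega_{(-)} : \id \Rightarrow {}^\sigma(-)$ applied to $f$, i.e. the square relating $\omega_L$, $\omega_M$ and $f$ commutes. Combining, $({}^\sigma f)(hf) = ({}^\sigma f)\,\omega_L$, and since ${}^\sigma f$ is monic we cancel it to obtain $hf = \omega_L$. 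Uniqueness of $h$ is immediate from the uniqueness in the factorization through the monomorphism ${}^\sigma f$.

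I expect the main obstacle to be bookkeeping the $\sigma$-twists correctly: one must be consistent about viewing $\omega_M$ as an $A$-linear map $M \to {}^\sigma M$ and about which twisted module the various maps land in, and must invoke the right "naturality square" $\omega_M f = {}^\sigma(f)\omega_L$ (which ultimately comes from $\omega a = \sigma(a)\omega$ in $A$). Once the twists are pinned down, the argument is just the universal property of a kernel combined with left-cancellation of a monomorphism, so there is no real difficulty beyond that. I would also remark, as the paper does, that this is the $n=2$ shadow of the decomposition of $n$-fold factorizations, and that the map $h$ together with $f$ assembles $0 \to L \to M \to N \to 0$ into a $2$-fold module factorization of $\omega$ with components $L$ and $M$ — this is presumably how it feeds into Lemma~\ref{full and dense}.
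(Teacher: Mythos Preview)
Your proposal is correct and follows essentially the same argument as the paper: use $\omega_N=0$ to factor $\omega_M:M\to {^\sigma M}$ through the monomorphism ${^\sigma f}$, obtaining the unique $h$ with ${^\sigma(f)}h=\omega_M$, and then precompose with $f$ and cancel the monomorphism ${^\sigma f}$ (using the naturality $\omega_M f={^\sigma(f)}\omega_L$) to get $hf=\omega_L$. The paper presents this via a single commutative diagram rather than prose, but the content is identical.
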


\begin{proof}
We have the following commutative diagram
$$
\xymatrix{
0\ar[r]&
L\ar[rr]^{f}\ar[d]_{\omega_L}&&
M\ar[d]_{\omega_M}\ar[rr]^{b}\ar@{-->}[dll]_{h}&&
N\ar[d]_{\omega_N}\ar[r]&
0\\
0\ar[r]&
^{\sigma}(L)\ar[rr]^{^{\sigma}(f)}&&
^{\sigma}(M)\ar[rr]^{^{\sigma}(g)}&&
^{\sigma}(N)\ar[r]&
0}
.$$
Since $\omega_N=0$, there is a unique $h:M\to {^{\sigma}}(L)$ such that $\omega_{M}={^{\sigma}}(f)h$. Moreover, ${^{\sigma}}(f)hf=\omega_{M}f={^{\sigma}(f)}\omega_L$, because ${^{\sigma}(f)}$ is mono, then $\omega_{L}=hf$.
\end{proof}

\begin{lem}{\label{full and dense}}
The following statements hold.
\begin{enumerate}
    \item The zeroth cokernel functor ${\Cok^0:\F^{mp}_{n}}(A;\omega)\longrightarrow \Lambda_{n-1}\Mod$ is full.
    \item The zeroth cokernel functor ${\Cok^0:\G^{0}\F_{n}}(A;\omega)\longrightarrow \Lambda_{n-1}\GProj$ is full and dense.
\end{enumerate}
\end{lem}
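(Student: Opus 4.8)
The plan is to prove both statements together, using the decomposition of an $n$-fold module factorization into $n-1$ composable $2$-fold module factorizations displayed above, and reducing everything to the $n=2$ case handled in \cite{Chen24} together with Lemma~\ref{2-fold}, Lemma~\ref{GP under syzygy}, and the monomorphism-category description of Theorem~\ref{thm:GProj over matrix algebra}. First I would observe that $\Cok^0$ does land in $\Lambda_{n-1}\GProj$ on the stated source categories: for $X\in\F^{mp}_n(A;\omega)$ each $d_X^i$ ($0\le i\le n-2$) is mono, so each $d_X^{0,j}$ is mono, and the short exact sequences $0\to X^0\xrightarrow{d_X^{0,j}} X^{j+1}\to \Cok(d_X^{0,j})\to 0$ have $\omega$ killing the cokernel (since $d_X^{0,j}$ divides $\omega_{X^0}$ up to the remaining factors, the cokernel is an $\bar A$-module). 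When moreover $X\in\G^0\F_n(A;\omega)$, each $X^{j+1}$ is Gorenstein projective and $X^0$ is Gorenstein projective, so by Lemma~\ref{GP under syzygy} applied to $0\to X^0\to X^{j+1}\to\Cok(d_X^{0,j})\to 0$ we get $\Cok(d_X^{0,j})\in\bar A\GProj$; the successive quotients $\Cok(d_X^{0,j})\to\Cok(d_X^{0,j+1})$ are monic with cokernel $\Cok(d_X^{j+1})$, again Gorenstein projective over $\bar A$ by the same lemma, so $\Cok^0(X)\in\S_{n-1}(\bar A\GProj)\simeq\Lambda_{n-1}\GProj$ by Theorem~\ref{thm:GProj over matrix algebra}.

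For fullness, I would take a morphism $\bar f=(\bar f^{\,0},\dots,\bar f^{\,n-2})\colon\Cok^0(X)\to\Cok^0(Y)$ in $\Lambda_{n-1}\GProj$ and lift it componentwise. Working along the tower of $2$-fold factorizations, the bottom row is exactly the $2$-fold situation $0\to X^0\to X^{n-1}\to\Cok(d_X^0)\to 0$ with $X^{n-1}$ projective; here one lifts using projectivity of $X^{n-1}$ (as in \cite[Lemma 2.4]{Chen24}) to obtain $f^{n-1}\colon X^{n-1}\to Y^{n-1}$ and compatible $f^0\colon X^0\to Y^0$, and Lemma~\ref{2-fold} supplies the uniqueness needed for the remaining structure maps to commute. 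Then I would propagate upward: given $f^0$, each intermediate square built from $X^0\xrightarrow{d_X^{0,j-1}}X^j\to\Cok(d_X^{0,j-1})$ forces a lift $f^j\colon X^j\to Y^j$ making the relevant squares commute, using that $d_Y^{0,j-1}$ is mono (which pins down $f^j$ on the image of $d_X^{0,j-1}$) together with the given $\bar f^{\,\cdot}$ on cokernels — this is precisely the standard argument that $\Cok$ is full on monomorphism categories, now carried out fiberwise over the module-factorization structure. One then checks that the resulting $(f^0,\dots,f^{n-1})$ is a genuine morphism of $n$-fold module factorizations, i.e. commutes with all $d^i_X$; the square at $d_X^{n-1}$ is where Lemma~\ref{2-fold} is essential, since the other squares commute by construction and the last one follows from uniqueness of the map $h$ in that lemma.

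For density (statement (2) only), I would start from an object of $\S_{n-1}(\bar A\GProj)$, i.e. a chain $Z^0\hookrightarrow Z^1\hookrightarrow\cdots\hookrightarrow Z^{n-2}$ of Gorenstein projective $\bar A$-modules with Gorenstein projective cokernels, and build a preimage $X\in\G^0\F_n(A;\omega)$ realizing it as $\Cok^0(X)$. The idea is to lift the chain to a chain of $A$-modules: choose a $\GProj$-cover (or a surjection from a Gorenstein projective $A$-module) onto the top term and pull back, arranging surjections $X^0\twoheadrightarrow Z^j$ compatibly; since each $Z^j$ is an $\bar A$-module one gets factorizations of $\omega_{X^0}$ through the kernels, and assembling these yields the maps $d_X^{0,j}$, hence the $d_X^j$. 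One must choose $X^0$ Gorenstein projective and $X^{n-1}$ projective — for the latter, after fixing $X^0$ and the $d_X^{0,j}$ one sets $X^{n-1}$ to be a projective module surjecting onto $\Cok(d_X^0)$ suitably, exactly as in the $n=2$ density argument of \cite[Lemma 2.4]{Chen24}, and checks via Lemma~\ref{GP under syzygy} that all intermediate $X^j$ come out Gorenstein projective. The main obstacle I anticipate is the density part: coherently lifting the whole flag $Z^0\hookrightarrow\cdots\hookrightarrow Z^{n-2}$ to $A$-modules while simultaneously keeping the end term $X^{n-1}$ projective and all the composites equal to $\omega$ requires a careful simultaneous induction (building $X^1,\dots,X^{n-1}$ from the top down, or the $Z^j$ from the bottom up), and getting the $\sigma$-twist bookkeeping right in the definition of the $d_X^i$; the fullness part, by contrast, should be essentially formal once the $n=2$ case and Lemma~\ref{2-fold} are in hand.
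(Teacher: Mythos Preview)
Your overall plan coincides with the paper's: decompose via the tower of $2$-fold factorizations, start the fullness argument from projectivity of $X^{n-1}$, close with Lemma~\ref{2-fold}, and build the preimage for density using Lemma~\ref{GP under syzygy} together with Theorem~\ref{thm:GProj over matrix algebra}. Two steps, however, do not work as you describe them.

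In the fullness argument, the ``propagate upward from $f^0$'' step fails. Knowing $f^0$ and the cokernel map $\bar f^{\,j-1}$ does \emph{not} produce a lift $f^j\colon X^j\to Y^j$: these data determine $f^j$ on the image of $d_X^{0,j-1}$ and on the quotient separately, but since the sequence $0\to X^0\to X^j\to\Cok(d_X^{0,j-1})\to 0$ need not split there is a genuine $\Ext^1$ obstruction to gluing them into a single map. The paper instead \emph{descends} from $g^{n-1}$ (which you already have by projectivity): it identifies $X^{n-2}$ with the kernel of $X^{n-1}\to\Cok(d_X^{n-2})$, verifies that $g^{n-1}$ is compatible with the induced map on these cokernels, and obtains $g^{n-2}$ as the unique induced map on kernels; then recurses to $g^{n-3},\dots,g^1$. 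At each stage existence is automatic because one is passing to a kernel, not lifting through a quotient.

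For density, your sketch is garbled: ``surjections $X^0\twoheadrightarrow Z^j$'' points the wrong way (the $X^0$ you want is the common \emph{kernel}), and fixing $X^0$ before $X^{n-1}$ is exactly what makes the coherence problem you flag hard. The paper's construction fixes $X^{n-1}$ \emph{first}: choose a projective $A$-module $X^{n-1}$ surjecting onto the top term $G^{n-1}$, let $X^0$ be the kernel (Gorenstein projective by Lemma~\ref{GP under syzygy}), and define each intermediate $X^{j}$ as the kernel of the composite $X^{n-1}\twoheadrightarrow G^{n-1}\twoheadrightarrow\Cok(f^{j})$. The snake lemma yields $\Cok(X^0\hookrightarrow X^{j})\cong G^{j}$, Lemma~\ref{GP under syzygy} gives $X^{j}\in A\GProj$, and Lemma~\ref{2-fold} supplies the last map $d_X^{n-1}$; the remaining $n$-fold relations then follow because every $d_X^{0,j}$ is monic. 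Building top-down from $X^{n-1}$ rather than bottom-up from $X^0$ is precisely what dissolves the simultaneous-lifting difficulty.
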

\begin{proof}
(1) Let $X,Y$ be $n$-fold module factorizations in $\F_{n}^{mp}(A;\omega)$ with 
$$\Cok^0(X)=M=(M^i,s^i),~\Cok^0(Y)=N=(N^i, t^i).$$
Let $(f^1,\cdots, f^{n-1}):M\longrightarrow N$ be a morphism in $\Lambda_{n-1}\Mod$. We have the following commutative diagram
$$\xymatrix{
0\ar[r]&
X^0\ar@{-->}[d]_{g^0}\ar[rr]^{d_X^{0,n-1}}&&
X^{n-1}\ar@{-->}[d]_{g^{n-1}}\ar[rr]^{\pi_{X}^{n-1}}&&
M^{n-1}\ar[d]_{f^{n-1}}\ar[r]&
0\\
0\ar[r]&
Y^0\ar[rr]^{d_Y^{0,n-1}}&&
Y^{n-1}\ar[rr]^{\pi_{Y}^{n-1}}&&
N^{n-1}\ar[r]&
0
}
$$
where $g^{n-1}$ exists since $X^{n-1}$ is a projective $A$-module. Moreover, $g^0$ exists. There is also another commutative diagram
$$
\xymatrix{
0\ar[r]&
M^{n-2}\ar[rr]^{s^{n-2}}\ar[d]_{f^{n-2}}&&
M^{n-1}\ar[rr]^{a^{n-1}}\ar[d]_{f^{n-1}}&&
C^{n-1}\ar@{-->}[d]_{h^{n-1}}\ar[r]&
0\\
0\ar[r]&
N^{n-2}\ar[rr]^{t^{n-2}}&&
N^{n-1}\ar[rr]^{b^{n-1}}&&
D^{n-1}\ar[r]&
0}
$$
where $C^{n-1}$ and $D^{n-1}$ are the cokernels of $s^{n-2}$ and $t^{n-2}$, respectively, and $h^{n-1}$ exists uniquely.

By the construction of $\Cok^0$, there are the following two commutative diagrams
$$
\xymatrix{
X^0\ar[rr]^{d_X^{0,n-3}}\ar@{=}[d]&&
X^{n-2}\ar[d]_{d_{X}^{n-2}}\ar[rr]^{\pi_{X}^{n-2}}&&
M^{n-2}\ar[d]_{s^{n-2}}\\
X^0\ar[rr]^{d_X^{0,n-2}}&&
X^{n-1}\ar[d]\ar[rr]^{\pi_{X}^{n-1}}&&
M^{n-1}\ar[d]_{a^{n-1}}\\
&&
C^{n-1}\ar@{=}[rr]&&
C^{n-1}}
$$
and
$$
\xymatrix{
Y^0\ar[rr]^{d_Y^{0,n-3}}\ar@{=}[d]&&
Y^{n-2}\ar[d]_{d_{Y}^{n-2}}\ar[rr]^{\pi_{Y}^{n-2}}&&
N^{n-2}\ar[d]_{t^{n-2}}\\
Y^0\ar[rr]^{d_Y^{0,n-2}}&&
Y^{n-1}\ar[d]\ar[rr]^{\pi_{Y}^{n-1}}&&
N^{n-1}\ar[d]_{b^{n-1}}\\
&&
D^{n-1}\ar@{=}[rr]&&
D^{n-1}}
$$
Since $$h^{n-1}a^{n-1}\pi_{X}^{n-1}=b^{n-1}f^{n-1}\pi_{X}^{n-1}=b^{n-1}\pi_{Y}^{n-1}g^{n-1},$$
there exists a unique morphism $g^{n-2}$ such that the following diagram commutes
$$
\xymatrix{
0\ar[r]&
X^{n-2}\ar[rr]^{d_{X}^{n-2}}\ar@{-->}[d]_{g^{n-2}}&&
X^{n-1}\ar[rr]^{a^{n-1}\pi_{X}^{n-1}}\ar[d]_{g^{n-1}}&&
C^{n-1}\ar[d]_{h^{n-1}}\ar[r]&
0\\
0\ar[r]&
Y^{n-2}\ar[rr]^{d_{Y}^{n-2}}&&
Y^{n-1}\ar[rr]^{b^{n-1}\pi_{Y}^{n-1}}&&
D^{n-1}\ar[r]&
0}
$$
Since 
\begin{align*}
    t^{n-2}f^{n-2}\pi_{X}^{n-2}&=f^{n-1}s^{n-2}\pi_{X}^{n-2}\\
                             &=f^{n-1}\pi_{X}^{n-1}d_{X}^{n-2}\\
                             &=\pi_{Y}^{n-1}g^{n-1}d_{X}^{n-2}\\
                             &=\pi_{Y}^{n-1}d_{Y}^{n-2}g^{n-2}\\
                             &=t^{n-2}\pi_{Y}^{n-2}g^{n-2}
\end{align*}
Since $t^{n-2}$ is a monomorphism, then $f^{n-2}\pi_{X}^{n-2}=\pi_{Y}^{n-2}g^{n-2}$. On the other hand, we have a diagram
$$
\xymatrix{
X^0\ar[d]_{g^0}\ar[rr]^{d_X^{0,n-2}}&&
X^{n-2}\ar[rr]^{d_{X}^{n-2}}\ar[d]_{g^{n-2}}&&
X^{n-1}\ar[d]_{g^{n-1}}\\
Y^0\ar[rr]^{d_Y^{0,n-2}}&&
Y^{n-2}\ar[rr]^{d_{Y}^{n-2}}&&
Y^{n-1}
}
$$
where the outer square and right square are commutative. Since $d_{Y}^{n-2}$ is a monomorphism, the left square is commutative. Therefore, we have the following commutative diagram
$$
\xymatrix{
0\ar[r]&
X^0\ar[rr]\ar[d]_{g^0}&&
X^{n-2}\ar[d]_{g^{n-2}}\ar[rr]^{\pi_{X}^{n-2}}&&
M^{n-2}\ar[d]_{f^{n-2}}\ar[r]&
0\\
0\ar[r]&
Y^0\ar[rr]&&
Y^{n-2}\ar[rr]^{\pi_{Y}^{n-2}}&&
N^{n-2}\ar[r]&
0.}
$$
By using recursive methods, one can construct $g=(g^0,g^1,\cdots,g^{n-1})$. We have the following diagram
$$
\xymatrix{
X^{n-1}\ar[d]_{g^{n-1}}\ar[rr]^{d_{X}^{n-1}}&&
^{\sigma}(X^0)\ar[d]_{^{\sigma}(g^0)}\ar[rr]&&
^{\sigma}(X^{n-1})\ar[d]_{^{\sigma}(g^{n-1})}\\
Y^{n-1}\ar[rr]^{d_{Y}^{n-1}}&&
{^{\sigma}(Y^0)}\ar[rr]^{{^{\sigma}(d_Y^{0,n-2})}}&&
^{\sigma}(Y^{n-1}),}
$$
where the outer square and the right square are commutative. Since ${^{\sigma}(d_Y^{0,n-2})}$ is a monomorphism, the left square commutative. Therefore, $g$ is a morphism between module factorizations and $\Cok^0(g)=f$.

(2) By applying Theorem \ref{thm:GProj over matrix algebra} and the construction of $\Cok^0$, we have the functor is well-defined. Due to (1), it suffices to prove the functor $\Cok^0$ is dense. Because of Theorem~\ref{thm:GProj over matrix algebra}, we may identify the Gorenstein projective $\Lambda_{n-1}$-modules as objects in $\S_{n-1}(\bar{A}\GProj)$.
Assume
$$\xymatrix{
G=G^1\ar[r]^{\quad f^1}& G^2\ar[r]^{f^2}& \cdots\ar[r]^{f^{n-3}}& G^{n-2}\ar[r]^{f^{n-2}}& G^{n-1}
}$$
is an object in $\S_{n-1}(\bar{A}\GProj)$. Let 
$$\xymatrix{
0\ar[r]&X^0\ar[rr]^{d_X^{0,n-2}}&& X^{n-1}\ar[rr]^{\mu^{n-1}}&& G^{n-1}\ar[r]& 0
}$$
be an exact sequence in $A\Mod$
with $X^{n-1}\in A\Proj$ and $\mu^{n-1}$ a projective cover, according to Theorem \ref{GP under syzygy}, $X^0\in A\GProj$. Begin at $X^0$ and $X^{n-1}$, then we construct $X^{n-2}$ and a monomorphism $d_X^{n-2}:X^{n-2}\longrightarrow X^{n-1}$ which satisfy the following commutative diagram
$$
\xymatrix{
& &0\ar[d]&0\ar[d]&\\
& &X^0\ar@{-->}[r]\ar[d]_{d_X^{0,n-2}}&X^{n-2}\ar@{-->}[d]^{d_X^{n-2}}&\\
& &X^{n-1}\ar[d]_{\mu^{n-1}}\ar@{=}[r]&X^{n-1}\ar[d]&\\
0\ar[r]&G^{n-2}\ar[r]^{f^{n-2}}&G^{n-1}\ar[d]\ar[r]&\Cok(f^{n-2})\ar[d]\ar[r]&0\\
&&0&0}
$$
Since $\Cok(f^{n-2})\in \bar{A}\GProj$, $X^{n-2}\in A\GProj$ by Theorem~\ref{GP under syzygy}. Note that the morphism $d^{0,n-2}$ from $X^0$ to $X^{n-1}$ factors through $d_X^{n-2}$.

It follows from the Snake Lemma, there is a short exact sequence 
$$\xymatrix{
0\ar[r]& X^0\ar[rr]^{d^{0,n-2}}&& X^{n-2}\ar[rr]^{\mu^{n-2}}&& G^{n-2}\ar[r]& 0.
}$$
By using recursive methods, we obtain a sequence of $n-1$ monomorphisms
$$\xymatrix{
X^0\ar[r]^{d_X^0}& X^1\ar[r]^{d_X^1}& \cdots\ar[r]^{d_X^{n-3}}& X^{n-2}\ar[r]^{d_X^{n-2}}& X^{n-1},
}$$
such that $\Cok(d_X^{0,i})=G^{i+1}$ for $i=0,\cdots, n-2$. By Lemma~\ref{2-fold}, there exists a unique morphism $d_X^{n-1}:X^{n-1}\longrightarrow ~{^{\sigma}(X^0)}$ such that $d_X^{0,n-1}=\omega_{X^0}$ and $^{\sigma}(d_X^{0,n-2})d_X^{n-1}=\omega_{X^{n-1}}$.
We need to check this is an $n$-fold module factorization, i.e., the composition, denoted by $\phi$, of 
$$\xymatrix{
X^i\ar[r]^{d_X^i\quad}& X^{i+1}\ar[r]^{d_X^{i+1}\quad\quad}& X^{i+2}\cdots X^{n-1}\ar[r]^{\quad\quad d_X^{n-1}}& ~^{\sigma}(X^0)\ar[r]^{^{\sigma}(d_X^0)}&\cdots \ar[r]^{^{\sigma}(d_X^{i-1})}& ~^{\sigma}(X^i)
}$$
is $\omega_{X^{i}}$. By composing the above sequence with $^{\sigma}(d_X^{i,n-2})$, we have
$${^{\sigma}}(d_X^{i,n-2})\phi=\omega_{X^{n-1}}d_X^{i,n-2}={^{\sigma}}(d_X^{i,n-2})\omega_{X^i}.$$
Since ${^{\sigma}}(d_X^{i,n-2})$ is a monomprphism, then $\phi=\omega_{X^i}$.
\end{proof}

\begin{lem}{\label{faithful}}
    Let $(f^0,f^1,\cdots,f^{n-1}):X\to Y$ be a morphism in $\F^{mp}_n(A;\omega)$. Then
    \begin{enumerate}
        \item $\Cok^0{(f^0,f^1,\cdots,f^{n-1})}=0$ if and only if $(f^0,f^1,\cdots,f^{n-1})$ factors through $\theta^0(P)$ for some projective $A$-module $P$;
        \item The morphism $\Cok^0{(f^0,f^1,\cdots,f^{n-1})}$ factors through a projective $\Lambda_{n-1}$-module if and only if $(f^0,f^1,\cdots,f^{n-1})$ is p-null-homotopical.
    \end{enumerate}
\end{lem}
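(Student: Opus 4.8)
The plan is to prove both statements by a careful diagram chase, using the explicit description of $\Cok^0$ together with Lemma~\ref{2-fold}, Lemma~\ref{GP under syzygy}, and the characterization of $p$-null-homotopical morphisms given in Lemma~\ref{lem:p-null-homo}. For part (1), the ``if'' direction is immediate: the functor $\Cok^0$ kills $\theta^0(P)$ because all the structure maps $d^0,\dots,d^{n-2}$ of $\theta^0(P)$ are identities, so all the relevant cokernels vanish; hence any morphism factoring through $\theta^0(P)$ has zero image under $\Cok^0$. For the ``only if'' direction, suppose $\Cok^0(f)=0$. Writing $\pi_X^i:X^i\to M^i=\Cok(d_X^{0,i-1})$ for the canonical projections, the condition $f^i$ induces zero on cokernels means $\pi_Y^i f^i = 0$, i.e. $f^i$ factors through $\im(d_X^{0,i-1})\cong X^0$ (using that $X\in\F^{mp}_n$, so $d_X^{0,i-1}$ is mono), say $f^i = d_Y^{0,i-1} g^i$ for a unique $g^i: X^i\to Y^0$. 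Compatibility of $f$ with the differentials, combined with the fact that the various $d_Y^{0,j-1}$ are monic, should force the $g^i$ to assemble into the data of a factorization of $f$ through $\theta^0(Y^0)$; but we need $Y^0$ replaced by a genuinely projective module. The fix is standard: since $X^{n-1}$ is projective, $\pi_Y^{n-1}$ can be lifted, and one arranges the factorization through $\theta^0(P)$ where $P = X^{n-1}$ (or a projective cover thereof), using the identities-plus-$\omega$ shape of $\theta^0$. I expect this bookkeeping — making the target module projective rather than merely some $Y^0$ — to be the one genuinely fiddly point of part (1).

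For part (2), the ``if'' direction is again easy: if $f$ is $p$-null-homotopical then by Lemma~\ref{lem:p-null-homo} it factors through $\theta^0(P^0)\oplus\cdots\oplus\theta^{n-1}(P^{n-1})$, and applying $\Cok^0$ we see $\Cok^0(f)$ factors through $\Cok^0$ of this object; one checks directly from the definition of the trivial factorizations that each $\Cok^0(\theta^j(P^j))$ is a projective $\Lambda_{n-1}$-module (the trivial factorizations have cokernels that are either $0$ or $\bar P^j = P^j/\omega P^j$ arranged in the shape of an indecomposable projective $\Lambda_{n-1}$-module, cf.\ the description of projectives over an upper-triangular matrix ring), hence so is the direct sum. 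For the ``only if'' direction, assume $\Cok^0(f)$ factors through a projective $\Lambda_{n-1}$-module $Q$. The projective $\Lambda_{n-1}$-modules decompose as sums of the indecomposable projectives $Q_i = (\bar A \xrightarrow{=}\cdots\xrightarrow{=}\bar A\to 0\to\cdots\to 0)$ (with $i$ copies of $\bar A$), so it suffices to treat $Q = Q_i$ built on a projective $\bar A$-module $\bar P$. The idea is to lift the factorization $\Cok^0(f) = \beta\alpha$ with $\alpha: \Cok^0(X)\to Q$, $\beta: Q\to \Cok^0(Y)$ back to the level of module factorizations: lift $\bar P$ to a projective $A$-module $P$, lift $\alpha$ and $\beta$ using projectivity of the relevant components and the exact sequences $0\to X^0\to X^i\to M^i\to 0$, and read off homotopy maps $h^i$ from the lifted data. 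Then $f$ minus the morphism factoring through the $\theta^i(P)$'s has zero cokernel, so by part (1) it factors through some $\theta^0(P')$, which is itself one of the allowed summands; hence $f$ is $p$-null-homotopical by Lemma~\ref{lem:p-null-homo}.

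The main obstacle I anticipate is the ``only if'' direction of (2): one must reconstruct, from a factorization that only sees the cokernels, explicit homotopy maps $h^0,\dots,h^{n-1}$ on the module factorizations themselves, and verify that they satisfy the $n$ identities in Definition~\ref{def:p-null-homo}. This requires simultaneously (a) choosing compatible lifts of the projective $\Lambda_{n-1}$-module and of the two factorizing morphisms through it, using projectivity of $X^{n-1}$ (and of the $P$'s) at each stage, and (b) combining the resulting partial data with part (1) to absorb the remaining ``cokernel-zero'' error term into a factorization through $\theta^0$. Organizing this as: reduce to $Q$ indecomposable, lift, produce the $\theta^i(P^i)$-factorization for the ``bulk'', invoke part (1) for the ``remainder'', and conclude via Lemma~\ref{lem:p-null-homo} — keeps each step to a manageable diagram chase, but the interlocking of the lifts is where the care is needed.
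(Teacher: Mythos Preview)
Your proposal is correct and follows the same architecture as the paper's proof. For part~(1), once you note (as you do) that monicity of the $d_Y^{0,j}$ forces the compatibility $g^{i+1}d_X^i = g^i$, whence $g^i = g^{n-1}d_X^{i,n-2}$, the factorization through $\theta^0(X^{n-1})$ drops out immediately; this is exactly the diagram the paper writes down, so your ``fiddly point'' dissolves. For part~(2), your overall scheme (lift the factorization through the projective $\Lambda_{n-1}$-module to the level of module factorizations, subtract, apply part~(1) to the remainder, conclude via Lemma~\ref{lem:p-null-homo}) matches the paper exactly. The one difference is that where you plan to construct the lifts of $\alpha$ and $\beta$ by an explicit diagram chase, the paper shortcuts this: it observes that $\Cok^0\bigl(\theta^1(A)\oplus\cdots\oplus\theta^{n-1}(A)\bigr)\cong\Lambda_{n-1}$, so any free $\Lambda_{n-1}$-module $F$ is $\Cok^0(Q)$ for some $Q\in\F_n^{mp}(A;\omega)$, and then invokes the fullness of $\Cok^0$ on $\F_n^{mp}(A;\omega)$ already established in Lemma~\ref{full and dense}(1) to produce $g\colon X\to Q$ and $h\colon Q\to Y$ with $\Cok^0(f-hg)=0$. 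This replaces the hand-lifting you flagged as the main obstacle with a one-line citation. (Neither Lemma~\ref{2-fold} nor Lemma~\ref{GP under syzygy} is actually needed in this proof.)
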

\begin{proof}
 The idea of the proof below is essentially from \cite[Lemma 5.4]{Chen24}.

(1) The ``if" part is obvious. For the ``only if" part, 
assume $$\Cok^0{(f^0,f^1,\cdots,f^{n-1})}=0,$$
then there exist $h^i:X^i\to Y^{0}$ for $i=1,\cdots, n-1$ such that
$d_Y^{0,i-1} h^i=f^i$

$$\xymatrix
{X^0\ar[rr]^{d_X^0}\ar[dd]_{f^0}&&X^1\ar[rr]^{d_X^1}\ar[dd]^{ f^1}\ar[ddll]_{h^1}&&X^2\ar[r]^{d_X^2}\ar[dd]^{f^2}\ar[ddllll]_{h^2\quad}&\cdots\ar[r]^{d_X^{n-2}}&X^{n-1}\ar[r]^{d_X^{n-1}}\ar[dd]^{f^{n-1}}\ar[ddllllll]_{h^{n-1}}&{^\sigma(X^0)}\ar[dd]^{^\sigma(f^0)}
\\
\\
Y^0\ar[rr]^{\quad d_Y^0}&&Y^1\ar[rr]^{d_Y^1}&&Y^2\ar[r]^{d_Y^2}&\cdots\ar[r]^{d_Y^{n-2}}&Y^{n-1}\ar[r]^{d_X^{n-1}}&{^\sigma(Y^0)}
}$$

Moreover, we have the following commutative diagram
$$\xymatrix{
X^0\ar[r]^{d_X^0}\ar[d]_{d_X^{0,n-2}}&X^1\ar[r]^{d_X^1}\ar[d]_{d_X^{1,n-2}}&X^2\ar[r]^{d_X^2}\ar[d]_{d_X^{2,n-2}}&\cdots\ar[r]^{d_X^{n-2}}&X^{n-1}\ar[r]^{d_X^{n-1}}\ar[d]_{\id}&{^\sigma(X^0)}\ar[d]^{^\sigma(d_X^{0,n-2})}
\\
X^{n-1}\ar[r]^{\id}\ar[d]_{h^{n-1}}&X^{n-1}\ar[r]^{\id}\ar[d]_{d_Y^0 h^{n-1}}&X^{n-1}\ar[r]^{\id}\ar[d]_{d_Y^{0,1} h^{n-1}}&\cdots\ar[r]^{\id}&X^{n-1}\ar[r]^{\omega_{X^{n-1}}}\ar[d]_{d_Y^{0,n-2} h^{n-1}}&X^{n-1}\ar[d]^{^{\sigma}{(h^{n-1})}}
\\
Y^0\ar[r]^{d_Y^0}&Y^1\ar[r]^{d_Y^1}&Y^2\ar[r]^{d_Y^2}&\cdots\ar[r]^{d_Y^{n-2}}&Y^{n-1}\ar[r]^{d_Y^{n-1}}&{^\sigma(Y^0)}
}$$
Since 
$$d_Y^{i,n-2}f^i=f^{n-1}d_X^{i,n-2}=d_Y^{0,n-2} h^{n-1} d_X^{i,n-2}$$
and $d_Y^0, \cdots, d_Y^{n-2}$ are monomorphic, so we obtain $f^i=d_Y^{0,i-1} h^{n-1} d_X^{i,n-2}$. 
Therefore $(f^0,f^1,\cdots,f^{n-1})$ factors through $\theta^0(X^{n-1})$ with $X^{n-1}$ a projective $A$-module.

(2) For the ``if" part, since $(f^0,f^1,\cdots,f^{n-1})$ is p-null-homotopical, then it factors through $\theta^0(P^0)\oplus\theta^1(P^1)\oplus\cdots\oplus\theta^{n-1}(P^{n-1})$ for some projective $A$-modules $P^0,P^1,\cdots,P^{n-1}$. Note that 
$$\Cok^0(\theta^0(P^0))=0, 
\Cok^0(\theta^1(P^1))=\begin{pmatrix}
    P^1/{\omega P^1}\\
    \vdots\\
    P^1/{\omega P^1}
\end{pmatrix},
\Cok^0(\theta^2(P^2))=\begin{pmatrix}  
    P^2/{\omega P^2}\\
    \vdots\\
    P^2/{\omega P^2}\\
        0
\end{pmatrix},\cdots
$$
are projective $\Lambda_{n-1}$-modules. Hence $$\Cok^0(f^0,f^1,\cdots,f^{n-1})$$ factors through a projective $\Lambda_{n-1}$-module $\oplus_{i=0}^{n-1}\Cok^0(\theta^i(P^i))$.

For the “only if " part, assume $\Cok^0(f^0,f^1,\cdots,f^{n-1})$ factors through a projective $\Lambda_{n-1}$-module thus a free $\Lambda_{n-1}$-module $F$. Note that, as $\Lambda_{n-1}$-modules
$$\Cok^0(\theta^{1}(A)\oplus\cdots\oplus \theta^{n-1}(A))\simeq \Lambda_{n-1}.$$ In other words, there exists a trivial module factorization $Q$ such that $\Cok^0(Q)\simeq F$. Since $\Cok^0$ is full on the subcategory $\F_n^{mp}(A;\omega)$ by Lemma~\ref{full and dense}(1), there exist morphisms
$$g=(g^0, g^1, \cdots, g^{n-1}):X\longrightarrow Q~\text{and}~h=(h^0, h^1, \cdots, h^{n-1}):Q\longrightarrow Y$$ such that $\Cok^0(f-hg)=0$. By $(1)$, $f-hg$ factors through $\theta^0(X^{n-1})$ with $X^{n-1}$ a projective $A$-module. 
Note that $hg$ factors through $Q$, thus $f$ is $p$-null homotopic by Lemma~\ref{lem:p-null-homo}.
\end{proof}

Define a class of morphisms in $\F_n(A;\omega)$
$$\I:=\{f\in\Mor(\F_n(A;\omega))\mid f~\text{factors through}~\theta^0{(P)}~\text{for some}~P\in A\Proj\}$$

The following corollary and theorem are non-commutative versions of Eisenbud's theorem(\cite{E80,H21}). In the context of hypersurface rings, they are proved in \cite[Lemma 5.1.2, Theorem 5.2.2]{T21}.

\begin{cor}
The induced functor $\Cok^0: \G^0\F_n(A;\omega)/\I\longrightarrow \Lambda_{n-1}\GProj$ is an equivalence.
\end{cor}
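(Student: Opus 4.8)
The plan is to assemble this as a formal consequence of the three preceding results—Lemma~\ref{full and dense}, Lemma~\ref{faithful}(1), and the density already established—rather than by any fresh computation. First I would recall the general principle: if $F\colon\A\to\B$ is a full and dense additive functor between additive categories, and $\I=\{f\mid F(f)=0\}$ happens to be an ideal of $\A$ closed under the relevant operations (which it automatically is, being the kernel of the action of $F$ on morphisms), then $F$ factors through the quotient category $\A/\I$ and the induced functor $\bar F\colon\A/\I\to\B$ is full, dense, and faithful, hence an equivalence. So the whole corollary reduces to verifying that $\Cok^0$, viewed as a functor $\G^0\F_n(A;\omega)\to\Lambda_{n-1}\GProj$, is full, dense, and has morphism-kernel exactly $\I$ (intersected with this subcategory).

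The key steps, in order: (1) Note by Lemma~\ref{full and dense}(2) that $\Cok^0\colon\G^0\F_n(A;\omega)\to\Lambda_{n-1}\GProj$ is full and dense; this is already done. (2) Identify the kernel of $\Cok^0$ on morphisms. For $f\colon X\to Y$ in $\G^0\F_n(A;\omega)$, we have $\G^0\F_n(A;\omega)\subseteq\F^{mp}_n(A;\omega)$ (noted in the text, since Gorenstein projective modules are $\omega$-torsionfree), so Lemma~\ref{faithful}(1) applies verbatim: $\Cok^0(f)=0$ iff $f$ factors through $\theta^0(P)$ for some projective $A$-module $P$, i.e. iff $f\in\I$. (3) Check that $\I$ is a (two-sided) ideal of morphisms in $\G^0\F_n(A;\omega)$—immediate, since it is the kernel of the functor $\Cok^0$ acting on Hom-sets, or directly because a composite touching $\theta^0(P)$ still factors through $\theta^0(P)$. (4) Conclude that $\Cok^0$ induces a well-defined functor $\overline{\Cok^0}\colon\G^0\F_n(A;\omega)/\I\to\Lambda_{n-1}\GProj$, which is still full and dense by (1), and now \emph{faithful} because we have quotiented out precisely the morphisms killed by $\Cok^0$. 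A full, faithful, dense functor is an equivalence, so we are done.

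One small point to be careful about: one must confirm that the objects of $\G^0\F_n(A;\omega)/\I$ are the same as those of $\G^0\F_n(A;\omega)$ (quotienting an additive category by an ideal changes only morphisms, not objects), so ``dense'' for the induced functor is inherited directly from density of $\Cok^0$; and that fullness is likewise inherited since $\Hom_{\A/\I}(X,Y)$ is a quotient of $\Hom_\A(X,Y)$. Both are standard facts about quotient categories by ideals, which I would state in one line.

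I do not expect a genuine obstacle here—the corollary is essentially bookkeeping on top of Lemmas~\ref{full and dense} and \ref{faithful}. The only mild subtlety, and the thing most worth spelling out, is the verification that $\I$ really is an ideal and that every $\Cok^0$-trivial morphism between objects of $\G^0\F_n(A;\omega)$ lies in it; this is exactly where one invokes the containment $\G^0\F_n(A;\omega)\subseteq\F^{mp}_n(A;\omega)$ so that Lemma~\ref{faithful}(1) is applicable. Everything else is the abstract-nonsense passage from a full dense functor with known morphism-kernel to an equivalence on the quotient.
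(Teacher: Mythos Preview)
Your proposal is correct and follows essentially the same approach as the paper: the paper's proof is the one-line remark ``It follows directly from Lemma~\ref{full and dense}(2) and Lemma~\ref{faithful}(1),'' and you have simply unpacked the standard quotient-category argument that makes this inference work. Your explicit mention of the containment $\G^0\F_n(A;\omega)\subseteq\F^{mp}_n(A;\omega)$ to justify applying Lemma~\ref{faithful}(1) is a helpful clarification that the paper leaves implicit.
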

\begin{proof}
It follows directly from Lemma~\ref{full and dense} (2) and   Lemma~\ref{faithful} (1).
\end{proof}

The category $\G^0\F_n(A;\omega)$ is an exact subcategory of $\GF_n(A;\omega)$ which is closed under extension, syzygy, (relative) cosyzygy, and contains the projective-injective objects. Consequently, $\G^0\F_n(A,\omega)$ emerges as a Frobenius exact category with a triangulated stable category denoted as $\underline{\G^0\F}_n(A,\omega)$.

\begin{thm}\label{thm:cok equiv 1}
    The zeroth cokernel functor $\Cok^0$ induces a triangular equivalence
    $$\Cok^0: \underline{\G^0\F}_n(A;\omega)\longrightarrow \Lambda_{n-1}\underline{\GProj}.$$
\end{thm}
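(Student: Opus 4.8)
The plan is to show that $\Cok^0$ descends to a well-defined triangle functor between the two stable categories and that this induced functor is fully faithful and dense.

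\emph{Descent to stable categories and the triangulated structure.} Both $\G^0\F_n(A;\omega)$ and $\Lambda_{n-1}\GProj$ are Frobenius exact categories, the projective-injective objects of the former being the direct summands of the $\theta^i(P^i)$ and those of the latter the projective $\Lambda_{n-1}$-modules. I would first verify that $\Cok^0$ restricts to an \emph{exact} functor $\G^0\F_n(A;\omega)\to\Lambda_{n-1}\GProj$; that the values indeed lie in $\Lambda_{n-1}\GProj$ is part of Lemma~\ref{full and dense}. A conflation in $\G^0\F_n(A;\omega)$ is, since $\F_n(A;\omega)\simeq\Gamma_n\Mod$ is abelian with componentwise kernels and cokernels, nothing but a componentwise short exact sequence $0\to X\to Y\to Z\to 0$ of $n$-fold module factorizations; because all structure maps of $X,Y,Z$ are monic (as $\G^0\F_n(A;\omega)\subseteq\F^{mp}_n(A;\omega)$), a Snake Lemma chase on the defining exact sequences $0\to X^0\to X^{i+1}\to\Cok(d_X^{0,i})\to 0$ (and likewise for $Y,Z$) shows that $0\to\Cok^0(X)\to\Cok^0(Y)\to\Cok^0(Z)\to 0$ is again componentwise short exact. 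Next, the computation already performed inside the proof of Lemma~\ref{faithful}(2) shows that $\Cok^0$ sends every $\theta^i(P^i)$, and hence every projective-injective object of $\G^0\F_n(A;\omega)$, to a projective $\Lambda_{n-1}$-module (and each $\theta^i(P^i)$ does belong to $\G^0\F_n(A;\omega)$). An exact functor between Frobenius exact categories that preserves projective-injective objects passes to a triangle functor on stable categories: here the comparison isomorphism $\underline{\Cok^0}\circ\Sigma\xrightarrow{\ \sim\ }\Sigma\circ\underline{\Cok^0}$ (of the respective suspensions) is produced by applying the exact functor $\Cok^0$ to a conflation $0\to X\to I\to\Sigma X\to 0$ with $I$ projective-injective, which exhibits $\Cok^0(\Sigma X)$ as a cosyzygy of $\Cok^0(X)$ in $\Lambda_{n-1}\GProj$, and conflations are carried to conflations by exactness. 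In particular $\underline{\Cok^0}$ is well defined on morphisms.

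\emph{Full faithfulness and density.} By the preceding Corollary, $\Cok^0$ induces an equivalence $\G^0\F_n(A;\omega)/\I\xrightarrow{\ \sim\ }\Lambda_{n-1}\GProj$. The key remaining point is that this equivalence matches the ideal of $\G^0\F_n(A;\omega)/\I$ formed by (classes of) morphisms factoring through a projective-injective object with the ideal of $\Lambda_{n-1}\GProj$ formed by morphisms factoring through a projective $\Lambda_{n-1}$-module. This is exactly the combination of Lemma~\ref{faithful}(2) and Lemma~\ref{lem:p-null-homo}: for a morphism $f$ of $\G^0\F_n(A;\omega)$ the image $\Cok^0(f)$ factors through a projective $\Lambda_{n-1}$-module if and only if $f$ is p-null-homotopical, if and only if $f$ factors through $\theta^0(P^0)\oplus\cdots\oplus\theta^{n-1}(P^{n-1})$, that is, through a projective-injective object of $\G^0\F_n(A;\omega)$. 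Since $\I$ is contained in the ideal of maps factoring through a projective-injective object, the quotient of $\G^0\F_n(A;\omega)/\I$ by that ideal is the stable category $\underline{\G^0\F}_n(A;\omega)$, while the quotient of $\Lambda_{n-1}\GProj$ by morphisms through projectives is $\Lambda_{n-1}\underline{\GProj}$; passing to these quotients, the equivalence above shows that the triangle functor $\underline{\Cok^0}$ of the previous paragraph is moreover an equivalence of categories. Hence $\underline{\Cok^0}$ is a fully faithful, dense triangle functor, therefore a triangle equivalence.

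\emph{Where the difficulty lies.} The only genuinely computational step is the exactness of $\Cok^0$ on componentwise short exact sequences in $\G^0\F_n(A;\omega)$ — i.e.\ that forming cokernels of the composites $d_X^{0,i}$ is exact once all structure maps are monic; granting this, the statement follows formally from the lemmas of this section together with the standard fact that an exact, projective-injective-preserving functor between Frobenius exact categories induces a triangle functor on the associated stable categories.
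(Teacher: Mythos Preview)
Your proposal is correct and follows essentially the same approach as the paper: both arguments first observe that $\Cok^0$ is exact and sends projective-injectives to projectives (hence induces a triangle functor on stable categories), and then invoke Lemma~\ref{full and dense} and Lemma~\ref{faithful} to obtain fullness, density, and faithfulness. Your presentation is more detailed---you spell out the Snake Lemma verification of exactness and route the equivalence through the preceding Corollary before quotienting further via Lemma~\ref{faithful}(2) and Lemma~\ref{lem:p-null-homo}---but the underlying ingredients and logical structure coincide with the paper's short proof.
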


\begin{proof}
Since $\Cok^0$ is exact and preserves projective-injective objects, it is a well-defined triangulated functor. By Lemma \ref{faithful}, the induced functor $\Cok^0$ is faithful. 
Combined with Lemma \ref{full and dense}, the statement follows directly.
\end{proof}

An $n$-fold module factorization $X$ is called an {\em $n$-fold matrix factorization} if each $X^i$ is a finitely generated projective $A$-module. By $\MF_n(A;\omega)$ we denote the full subcategory of $\F_n(A;\omega)$ consisting of $n$-fold matrix factorizations. We denote $\Lambda_{n-1}{\Gproj}^{<+\infty}$ the full subcategory of $\Lambda_{n-1}{\Gproj}$ comprising objects where each component possesses finite projective dimension as an $A$-module. Note that $\Lambda_{n-1}{\Gproj}^{<+\infty}$ is extension-closed, contains projective objects of $\Lambda_{n-1}{\Gproj}$, and is a Frobenius exact category.

\begin{thm}\label{thm:matrix factor}
The functor $\Cok^0$ induces a triangular equivalence
$$\Cok^0: \underline{\MF_n}(A;\omega)\longrightarrow \Lambda_{n-1}\underline{\Gproj}^{<+\infty}$$
\end{thm}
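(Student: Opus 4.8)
The plan is to deduce Theorem~\ref{thm:matrix factor} from Theorem~\ref{thm:cok equiv 1} by restricting the triangle equivalence $\Cok^0:\underline{\G^0\F}_n(A;\omega)\xrightarrow{\sim}\Lambda_{n-1}\underline{\GProj}$ to appropriate subcategories on both sides. First I would observe that $\MF_n(A;\omega)\subseteq\G^0\F_n(A;\omega)$: a finitely generated projective $A$-module is in particular Gorenstein projective (so $X\in\GF_n(A;\omega)$), and $X^{n-1}$ is projective by definition, so $X\in\G^0\F_n(A;\omega)$. Since $\MF_n(A;\omega)$ is closed under extensions, syzygies and relative cosyzygies inside $\G^0\F_n(A;\omega)$ (kernels and cokernels in $\F_n$ are computed componentwise, and the relevant short exact sequences of $A$-modules of finitely generated projectives stay inside finitely generated projectives after splitting off a projective) and contains the projective-injective objects $\theta^i(P^i)$ for $P^i$ finitely generated projective, it is a Frobenius exact subcategory and $\underline{\MF}_n(A;\omega)$ is a full triangulated subcategory of $\underline{\G^0\F}_n(A;\omega)$.

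Next I would identify the image. The point is that $\Cok^0$ sends $\MF_n(A;\omega)$ into $\Lambda_{n-1}\underline{\Gproj}^{<+\infty}$: if $X\in\MF_n(A;\omega)$ then each $\Cok(d_X^{0,i})$ sits in a short exact sequence $0\to X^0\to X^{i+1}\to\Cok(d_X^{0,i})\to 0$ of finitely generated $A$-modules with $X^0$ Gorenstein projective and $X^{i+1}$ projective; since $X$ is a matrix factorization the map $d_X^{0,i}$ is split mono when composed suitably with $d_X^{i+1,n-1}$ up to $\omega$, but more to the point $X^0$ is a finitely generated projective module over... — here one must be slightly careful: $X^0$ is only Gorenstein projective a priori. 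However $\Cok(d_X^{0,i})$ is killed by $\omega$ (it lies in $\bar A\Mod$) and is a quotient of the finitely generated projective $X^{i+1}$, hence finitely generated; and it has finite projective dimension as an $A$-module precisely because $X^0$ and $X^{i+1}$ are finitely generated projective — wait, $X^0$ need not be projective, only $X^{n-1}$ is. The correct argument: from $0\to X^0\xrightarrow{d_X^{0,n-1}=\omega}{}^\sigma(X^{n-1})$ hmm — instead use that $X^0$ appears as the first term of the matrix factorization, and by periodicity $X^0\cong\Ker(d_X^{n-1})$ fits in $0\to X^0\to X^{n-1}\xrightarrow{d_X^{n-1}}{}^\sigma(X^0)$... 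The cleanest route is: since all $X^i$ are finitely generated projective, $\Cok(d_X^{0,i})$ has projective dimension at most $1$ over $A$ if $d_X^{0,i}$ is mono with projective source, and in a matrix factorization each $d_X^{0,i}$ is indeed mono (as $\omega$ is regular and factors through it), with source $X^0$ finitely generated projective — so $\operatorname{pd}_A\Cok(d_X^{0,i})\leq 1<\infty$. Thus $\Cok^0(X)\in\Lambda_{n-1}\underline{\Gproj}^{<+\infty}$.

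Then I would show fullness and density of the restricted functor onto $\Lambda_{n-1}\underline{\Gproj}^{<+\infty}$. Fullness is inherited from Lemma~\ref{full and dense}(1) together with the faithfulness established in Lemma~\ref{faithful} (exactly as in the proof of Theorem~\ref{thm:cok equiv 1}), once one checks that morphisms lifted by $\Cok^0$ can be taken inside $\MF_n(A;\omega)$, which follows because the construction in Lemma~\ref{full and dense}(1) only uses projectivity of $X^{n-1}$ and componentwise diagrams. For density, given $G\in\Lambda_{n-1}\Gproj^{<+\infty}$, I would run the recursive construction from the proof of Lemma~\ref{full and dense}(2): choose $0\to X^0\to X^{n-1}\to G^{n-1}\to 0$ with $X^{n-1}$ a finitely generated projective cover; since $\operatorname{pd}_A G^{n-1}<\infty$ and $G^{n-1}\in\bar A\Gproj$ with $\bar A\Gproj\cap\{\text{finite pd over }A\}$ behaving well, the syzygy $X^0$ is finitely generated projective — this is the crucial finiteness input, and I would invoke that a finitely generated Gorenstein projective module of finite projective dimension is projective. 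Continuing the Snake-Lemma recursion keeps every $X^i$ finitely generated projective, producing $X\in\MF_n(A;\omega)$ with $\Cok^0(X)\cong G$. The main obstacle is precisely this finiteness bookkeeping: verifying that $\Lambda_{n-1}\Gproj^{<+\infty}$ is the exact image (no more, no less) and that the recursive construction of the matrix factorization never leaves the finitely generated projective world — equivalently, that finite projective dimension of the components over $A$ forces each syzygy $X^i$ to be finitely generated projective. Once this is in place, the equivalence of stable categories follows formally, as in Theorem~\ref{thm:cok equiv 1}, since $\Cok^0$ is exact, preserves projective-injectives, is faithful, full, and dense onto $\Lambda_{n-1}\underline{\Gproj}^{<+\infty}$.
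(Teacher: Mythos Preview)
Your approach is essentially the same as the paper's: restrict the equivalence of Theorem~\ref{thm:cok equiv 1} to $\MF_n(A;\omega)$, observe that $\Cok^0$ lands in $\Lambda_{n-1}\underline{\Gproj}^{<+\infty}$ (each component has projective dimension at most $1$ over $A$), inherit full faithfulness, and for density rerun the recursive construction of Lemma~\ref{full and dense}(2) using the fact that a Gorenstein projective module of finite projective dimension is projective. One remark: your momentary worry that ``$X^0$ need not be projective, only $X^{n-1}$ is'' confuses $\MF_n$ with $\G^0\F_n$ --- in $\MF_n(A;\omega)$ \emph{all} components are finitely generated projective by definition, which is exactly what you use in the very next sentence; just delete that detour.
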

\begin{proof}
It follows from \cite[Theorem 7.2]{Chen24} and the definition of the zeroth cokernel (see Section~\ref{sec:cokernel}) that $\Cok^0$ is well-defined, and Theorem~\ref{thm:cok equiv 1} implies that $\Cok^0$ is fully faithful. It remains to prove the denseness of $\Cok^0$. Let 
$$\xymatrix{
G=G^0\ar[r]^{\quad f^0}& G^1\ar[r]^{f^1}& \cdots\ar[r]^{f^{n-4}}& G^{n-3}\ar[r]^{f^{n-3}}& G^{n-2}
}$$
be an object in $\Lambda_{n-1}{\Gproj}^{<+\infty}$, due to Lemma~\ref{full and dense} and its proof, $G$ admits a preimage $X$ of which each component as an $A$-module is finitely generated Gorenstein projective and has finite projective dimension. It is known to all that a Gorenstein projective module with finite projective dimension is projective. Hence, $X$ is in $\MF_n(A;\omega)$. We finish the proof.
\end{proof}

\section{Recollements}\label{sec:recollement}
The concept of recollements of triangulated categories,  initially introduced by Beilinson, Bernstein, and Deligne \cite{BBD}, plays a crucial role in representation theory. In this section, we will explore the occurrence of recollements within the triangulated stable categories discussed in the previous sections. 

\begin{defn}
Let $\T$, $\X$ and $\Y$ be triangulated categories.
We say that $\T$ is a {\em recollement} (\cite{BBD}) of $\X$
and $\Y$ if there is a diagram of six triangle functors
$$\xymatrix{\X\ar^-{i_*=i_!}[rr]&&\T\ar^-{j^!=j^*}[rr]
\ar^-{i^!}@/^1.2pc/[ll]\ar_-{i^*}@/_1.6pc/[ll]
&&\Y\ar^-{j_*}@/^1.2pc/[ll]\ar_-{j_!}@/_1.6pc/[ll]}$$ 
such that
\begin{enumerate}
    \item $(i^*,i_*),(i_!,i^!),(j_!,j^!)$ and $(j^*,j_*)$ are adjoint
pairs;
    \item $i_*,j_*$ and $j_!$ are fully faithful functors;
    \item $i^!j_*=0$; and
    \item for each object $T\in\T$, there are two triangles in
$\T$
$$
i_!i^!(T)\longrightarrow T\longrightarrow j_*j^*(T)\longrightarrow i_!i^!(T)[1],
$$
$$
j_!j^!(T)\longrightarrow T\longrightarrow i_*i^*(T)\longrightarrow j_!j^!(T)[1].
$$
\end{enumerate}
\end{defn}

The adjoint triples $(*)$ in section~\ref{sec:module factorizations} (after Proposition~\ref{prop:module cat}) can be restricted to subcategories $\GF_{n+1}(A;\omega), \G^0F_{n+1}(A;\omega)$ and $\MF_{n}(A;\omega)$.

\begin{lem}\label{lem:res to stable G0F}
    The face functor $\theta^{i-1}_i$ induces a fully faithful functor 
$$\theta^{i-1}_i:{\underline{\F}_{i}(A;\omega)}\longrightarrow {\underline{\F}_{i+1}(A;\omega)}$$
which restricts to triangle functors 
$$\xymatrix{
{\underline{\G\F}_{i}(A;\omega)}\ar[rr]^{\theta^{i-1}_i}&&{\underline{\G\F}_{i+1}(A;\omega)}
\\
{\underline{\G^0\F}_{i}(A;\omega)}\ar@{^(->}[u]\ar[rr]^{\theta^{i-1}_i}&& {\underline{\G^0\F}_{i+1}(A;\omega)}\ar@{^(->}[u]
\\
{\underline{\MF}_{i}(A;\omega)}\ar@{^(->}[u]\ar[rr]^{\theta^{i-1}_i}&& {\underline{\MF}_{i+1}(A;\omega)}\ar@{^(->}[u]
}$$
\end{lem}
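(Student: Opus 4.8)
The plan is to verify the three required properties of the claimed functor in turn, leveraging the adjointness and relation lemmas already established. First I would treat the statement at the level of the homotopy/stable category: by Lemma~\ref{lem:p-null-homo}, the stable category $\underline{\F}_i(A;\omega)$ is the quotient of $\F_i(A;\omega)$ by the ideal of morphisms factoring through $\theta^0(P^0)\oplus\cdots\oplus\theta^{i-1}(P^{i-1})$, and similarly for $\underline{\F}_{i+1}$. So the key point for well-definedness is that $\theta^{i-1}_i$ sends $p$-null-homotopical morphisms to $p$-null-homotopical morphisms. Using Lemma~\ref{i to i+1}(2) (namely $S\theta_i^{j+1}=\theta_i^{j}S$, hence $\theta^{i-1}_i\theta^j_{\F_i}$ relates to $\theta^j_{\F_{i+1}}$ up to the known degeneracy/face identities) together with $\theta^0=\theta_{i-1}^0\cdots\theta_1^0$ from Lemma~\ref{pr}(1), I would show that $\theta^{i-1}_i\bigl(\theta^j(P)\bigr)$ is again a direct sum of trivial factorizations $\theta^k(P)$ in $\F_{i+1}(A;\omega)$ (the extra $\id$-edge inserted in position $i-1$ just creates one more copy of the relevant module), so the ideal is preserved and the functor descends to stable categories.

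Next I would address full faithfulness. On the big categories $\theta^{i-1}_i$ is fully faithful (stated in the paragraph preceding Lemma~\ref{pr}: ``each face functor is fully faithful''), and it has a retraction ${\pr}_i^{i-1}$ by Lemma~\ref{i to i+1}(1). To pass full faithfulness to the stable categories, one needs that the retraction also descends and that a morphism $f$ in $\F_i$ with $\theta^{i-1}_i(f)$ $p$-null-homotopical in $\F_{i+1}$ is itself $p$-null-homotopical; applying ${\pr}_i^{i-1}$ and using ${\pr}_i^{i-1}\theta^{i-1}_i=\id$ together with the fact that ${\pr}_i^{i-1}$ carries the projective-generator summands $\theta^k(P)$ of $\F_{i+1}$ to projective objects of $\F_i$ (again a trivial-factorization bookkeeping argument, or one can invoke Proposition~\ref{prop:module cat}) gives this. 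Combined with fullness on the ambient categories this yields that $\theta^{i-1}_i$ is fully faithful after stabilization.

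For the claim that $\theta^{i-1}_i$ is a \emph{triangle} functor on the three stable categories, I would first note that $\underline{\G\F}_i$, $\underline{\G^0\F}_i$, $\underline{\MF}_i$ are the stable categories of the Frobenius exact categories $\GF_i(A;\omega)$, $\G^0\F_i(A;\omega)$, $\MF_i(A;\omega)$ (Proposition~\ref{prop:projs} and the discussion around Theorem~\ref{thm:cok equiv 1} and Theorem~\ref{thm:matrix factor}). It therefore suffices to check that $\theta^{i-1}_i$ is an exact functor between these Frobenius exact categories and sends projective-injective objects to projective-injective objects: exactness is clear since kernels and cokernels in $\F_n(A;\omega)$ are computed componentwise and inserting an identity edge in position $i-1$ does not disturb this, and preservation of projective-injectives is exactly the trivial-factorization computation already used above. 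Finally I would check the restriction statements: if $X\in\GF_i$ then $\theta^{i-1}_i(X)$ has components $X^0,\dots,X^{i-2},X^{i-1},X^{i-1},X^i,\dots$ — wait, more precisely the components are $X^0,\dots,X^{i-1}$ with $X^{i-1}$ repeated — all Gorenstein projective, so $\theta^{i-1}_i(X)\in\GF_{i+1}$; if moreover $X^{i-1}$ is projective then the top component of $\theta^{i-1}_i(X)$ is still $X^{i-1}$ (projective), giving the $\G^0\F$ restriction; and if all $X^j$ are finitely generated projective then so are all components of $\theta^{i-1}_i(X)$, giving the $\MF$ restriction. The vertical maps in the displayed diagram are the obvious inclusions, and commutativity of the squares is immediate from the componentwise description.

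The main obstacle I anticipate is the bookkeeping in the first paragraph: carefully identifying $\theta^{i-1}_i\bigl(\theta^j(P)\bigr)$ with a direct sum of trivial $(i{+}1)$-fold factorizations, so as to be sure the ideal of $p$-null-homotopical morphisms (equivalently, morphisms factoring through the projective generator) is preserved and reflected. Once that identification is pinned down — it is a direct unravelling of Definition~\ref{def:n-mod-fac}, the definition of $\theta^i$, and the face functor — everything else follows formally from the adjunction and relation lemmas (Lemma~\ref{pr}, Lemma~\ref{i to i+1}, Lemma~\ref{lem:adjoint pair}) already proved.
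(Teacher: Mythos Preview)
Your proposal is correct, and for the faithfulness step it takes a genuinely different route from the paper. The paper argues faithfulness by a direct homotopy computation: if $\theta^{i-1}_i(f)=(f^0,\ldots,f^{i-1},f^{i-1})$ is $p$-null-homotopical via $(h^0,\ldots,h^{i-1},h^i)$, then one checks by hand that $(h^0,\ldots,h^{i-2},\,{}^{\sigma^{-1}}(d_Y^{i-1})h^{i-1}+h^i)$ is a homotopy witnessing that $f$ itself is $p$-null-homotopical. Your argument is more categorical: you use the retraction ${\pr}_{i+1}^{i-1}$ (your ``${\pr}_i^{i-1}$'' is a typo for this) together with the observation that it preserves projectives, so that a factorization of $\theta^{i-1}_i(f)$ through a projective object descends to one for $f={\pr}_{i+1}^{i-1}\theta^{i-1}_i(f)$. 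Your route avoids unwinding Definition~\ref{def:p-null-homo} and instead leverages the adjunctions already set up in Lemma~\ref{lem:adjoint pair}; the paper's route is shorter on the page but asks the reader to verify the combined homotopy against the explicit formulas. Two small clean-ups for your write-up: (i) the computation $\theta^{i-1}_i(\theta^j(P))$ is not a genuine direct sum but simply equals $\theta^j(P)$ in $\F_{i+1}(A;\omega)$ for each $0\le j\le i-1$ (insert the extra $\id$ after position $i-1$ and compare with the trivial factorizations), so the ``bookkeeping obstacle'' you flag dissolves immediately; (ii) the cleanest justification that both $\theta^{i-1}_i$ and ${\pr}_{i+1}^{i-1}$ preserve projectives is that each is a left adjoint whose right adjoint (${\pr}_{i+1}^{i-1}$ and $S\theta_i^0$ respectively, by Lemma~\ref{lem:adjoint pair}) is exact, rather than appealing to Proposition~\ref{prop:module cat}.
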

\begin{proof}
It is routine to check that $\theta^{i-1}_i$ is well-defined. It is obvious that $\theta^{i-1}_i$ is full. 
We prove $\theta^{i-1}_i$ is faithful. Let $X, Y\in \F_{i}(A;\omega)$ and $f:X\to Y$ satisfy $\theta^{i-1}_i(f)=0$ in ${\underline{\G\F}_{i+1}(A;\omega)}$. Assume that $\theta^{i-1}_i(f)=(f^0, f^1, \cdots, f^{i-1}, f^{i-1})$ has a homotopy $(h^0, h^1,\cdots, h^{i-1}, h^i)$. Then $(f^0, f^1, \cdots, f^{i-1})$ has a homotopy 
$$(h^0, h^1,\cdots, ^{\sigma^{-1}}(d_Y^{i-1})h^{i-1}+h^i)$$
this implies that $f=0$ in ${\underline{\G\F}_{i}(A;\omega)}$.
\end{proof}

\begin{thm}\label{thm:recollement}
    There are the following recollements for $1\leq k\leq n-1$
$$\xymatrix{
{\underline{\G^0\F}_{n-k+1}(A;\omega)}\ar[rr]^{\inc}
&
&
{\underline{\GF}_{n}(A;\omega)}\ar[rr]^{{\pr}_{k+1}^0\cdots {\pr}_n^0}\ar@/^1.2pc/[ll]^{\inc_{\rho}}\ar@/_1.3pc/[ll]_{\inc_{\Lambda_{n-1}}}
&
&{\underline{\GF}_{k}(A;\omega)}\ar^-{S^n\theta_n^0\cdots\theta_k^0 S^{-(k-1)}}@/^1.2pc/[ll]\ar_-{\theta_n^0\cdots\theta_{k+1}^0}@/_1.3pc/[ll]
}$$  
where $\inc=S^{-1}\theta^{n-2}_{n-1}\cdots \theta^{n-k+1}_{n-k+2}\theta^{n-k}_{n-k+1}$. The recollements restrict to recollements
$$\xymatrix{
{\underline{\G^0\F}_{n-k+1}(A;\omega)}\ar[rr]^{\inc}
&
&
{\underline{\G^0\F}_{n}(A;\omega)}\ar[rr]^{{\pr}_{k+1}^0\cdots {\pr}_n^0}\ar@/^1.2pc/[ll]^{\inc_{\rho}}\ar@/_1.3pc/[ll]_{\inc_{\Lambda_{n-1}}}
&
&{\underline{\G^0\F}_{k}(A;\omega)}\ar^-{S^n\theta_n^0\cdots\theta_k^0 S^{-(k-1)}}@/^1.2pc/[ll]\ar_-{\theta_n^0\cdots\theta_{k+1}^0}@/_1.3pc/[ll]
}$$
and 
$$\xymatrix{
{\underline{\MF}_{n-k+1}(A;\omega)}\ar[rr]^{\inc}
&
&
{\underline{\MF}_{n}(A;\omega)}\ar[rr]^{{\pr}_{k+1}^0\cdots {\pr}_n^0}\ar@/^1.2pc/[ll]^{\inc_{\rho}}\ar@/_1.3pc/[ll]_{\inc_{\Lambda_{n-1}}}
&
&{\underline{\MF}_{k}(A;\omega).}\ar^-{S^n\theta_n^0\cdots\theta_k^0 S^{-(k-1)}}@/^1.2pc/[ll]\ar_-{\theta_n^0\cdots\theta_{k+1}^0}@/_1.3pc/[ll]
}$$
\end{thm}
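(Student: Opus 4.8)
The plan is to build the recollement from the adjoint triples already in hand, verify the four recollement axioms, and then check that the construction is compatible with passing to the three families of exact subcategories. The starting point is the iterated adjoint triple $(*)$ following Proposition~\ref{prop:module cat}: composing the adjunctions $(\theta_n^0, \pr_{n+1}^0)$ and $(\pr_{n+1}^0, S^n\theta_n^0 S^{-(n-1)})$ of Lemma~\ref{lem:adjoint pair} along the chain from $\F_n$ down to $\F_k$ yields that $(\theta_n^0\cdots\theta_{k+1}^0,\ \pr_{k+1}^0\cdots\pr_n^0)$ and $(\pr_{k+1}^0\cdots\pr_n^0,\ S^n\theta_n^0\cdots\theta_n^0 S^{-(k-1)})$ are adjoint pairs between $\F_n(A;\omega)$ and $\F_k(A;\omega)$; using Proposition~\ref{prop:faithful frobenius} these functors are exact (send projective-injectives to projective-injectives) and descend to the stable categories. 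Thus on $\underline{\GF}$ the ``right half'' $j_!,j^!,j^*,j_*$ of the diagram is already present, with $j_!=\theta_n^0\cdots\theta_{k+1}^0$, $j^!=j^*=\pr_{k+1}^0\cdots\pr_n^0$, $j_*=S^n\theta_n^0\cdots\theta_n^0 S^{-(k-1)}$; since the face functors are fully faithful (Lemma~\ref{lem:res to stable G0F}), $j_!$ and $j_*$ are fully faithful.

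\textbf{Identifying the kernel.} The essential image of $i_*=\inc$ must be the kernel of $j^*=\pr_{k+1}^0\cdots\pr_n^0$. An object $X\in\underline{\GF}_n(A;\omega)$ lies in this kernel precisely when, after deleting the last $k$ arrows and forming the relevant composition, one lands in a contractible object, i.e. when $X^{n-k}\to X^{n-k+1}\to\cdots\to{}^\sigma(X^0)$ is, up to the trivial factor $\theta^0(P)$, an isomorphism chain. Equivalently $X$ is homotopy equivalent to an $n$-fold factorization whose top $k$ differentials $d_X^{n-k},\ldots,d_X^{n-1}$ (read cyclically through $\sigma$) are identities after a degeneracy, so $X\cong S^{-1}\theta_{n-1}^{n-2}\cdots\theta_{n-k}^{n-k+1}(Z)$ for some $Z\in\underline{\G^0\F}_{n-k+1}(A;\omega)$; this is exactly the functor $\inc$ in the statement. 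Here the requirement $X^{n-1}$ projective in $\G^0\F$ matches the ``$X^{n-1}$ is projective'' clause built into the degeneracy-face description, so the identification is forced. I would make this precise by checking $\pr_{k+1}^0\cdots\pr_n^0\circ\inc=0$ on the nose (immediate from Lemma~\ref{i to i+1}(1) and the shape of the face functors) and conversely, given $X$ in the kernel, producing the explicit homotopy equivalence $X\simeq\inc(Z)$ with $Z=$ (the appropriate truncation of $X$), using that the erased part becomes split-acyclic. The two adjoints $\inc_\rho$ (right adjoint) and $\inc_{\Lambda_{n-1}}$ (left adjoint) of $\inc$ are then constructed from the unit/counit of the $j$-adjunctions via the standard recollement recipe: $i^!=\inc_\rho$ is the fiber of the unit $\id\to j_*j^*$ composed with a section, and $i^*=\inc_{\Lambda_{n-1}}$ is the cofiber of $j_!j^!\to\id$; concretely these are again built from face/degeneracy functors and the cone construction in the Frobenius category $\GF_n$.

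\textbf{Verifying the axioms and restricting.} With all six functors named, axioms (1)--(4) are checked as follows: (1) the two new adjunctions $(i^*,i_*)$ and $(i_!,i^!)$ come for free from the cofiber/fiber descriptions together with the $j$-adjunctions; (2) $i_*=\inc$ is fully faithful because it is a composite of fully faithful face functors (Lemma~\ref{lem:res to stable G0F}) followed by the equivalence $S^{-1}$; (3) $i^!j_*=0$ reduces to $\pr$-$\theta$ vanishing relations, i.e. $\pr_{n-k+1}^0\cdots=0$ on the image of $S^n\theta_n^0\cdots$, a direct computation from Lemma~\ref{i to i+1} and Lemma~\ref{pr}; (4) the two defining triangles are the canonical (co)fiber triangles of the (co)units, which exist and are distinguished because $\underline{\GF}_n$ is the stable category of a Frobenius category and $j_!,j_*$ are sections/retractions up to these cones. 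Finally, for the restriction statement: each of the six functors is built from face functors $\theta$, degeneracy functors $\pr$, shifts $S^{\pm}$, and mapping cones; by Lemma~\ref{lem:res to stable G0F} the face functors preserve $\underline{\G^0\F}$ and $\underline{\MF}$, the degeneracy functors visibly do too (a degeneracy of a factorization with projective components / projective top term again has this property), shifts are autoequivalences of each subcategory, and these subcategories are closed under the relevant cones (being extension-closed Frobenius subcategories containing the projective-injectives, as noted before Theorem~\ref{thm:cok equiv 1}); hence all structure maps restrict and the recollement axioms, being identities between functors and exactness of triangles, are inherited verbatim.

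\textbf{Main obstacle.} I expect the crux to be the precise identification of $\ker(\pr_{k+1}^0\cdots\pr_n^0)$ with the essential image of $\inc$ at the level of \emph{stable} categories --- that is, showing every $X$ killed by the degeneracy composite is genuinely homotopy-equivalent (not merely isomorphic in some quotient) to $\inc(Z)$ for a $Z\in\underline{\G^0\F}_{n-k+1}$ with $Z^{\,\text{top}}$ projective. This requires carefully peeling off the split-acyclic top $k$ differentials while controlling the Gorenstein-projectivity of the new top term, which is where Lemma~\ref{GP under syzygy} and the ``$\omega$-torsionfree $\Rightarrow$ mono'' observation enter; once that matching is in place, everything else is bookkeeping with the adjunctions and cone triangles already assembled in Sections~\ref{sec:preliminaries} and \ref{sec:module factorizations}.
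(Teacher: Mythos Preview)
Your overall strategy matches the paper's: assemble the right half of the recollement from the iterated adjoint triple $(*)$ and then identify $\Ker(\pr_{k+1}^0\cdots\pr_n^0)$ with the essential image of $\inc$. Where you diverge is in the mechanism for that kernel identification. You propose to ``peel off split-acyclic top differentials'' and invoke Lemma~\ref{GP under syzygy} to control Gorenstein-projectivity of the truncated top term. The paper instead argues concretely at the level of objects: given $X$ with $\pr_{k+1}^0\cdots\pr_n^0(X)$ stably zero, it adds a projective lift $Q'$ so that $\pr_{k+1}^0\cdots\pr_n^0(X\oplus Q')\cong\bigoplus_i\theta^i(P^i)$ in $\GF_k(A;\omega)$, and then makes the key observation that under $\pr_{k+1}^0\cdots\pr_n^0$ every $\theta^i(P^i)$ has only projective preimages \emph{except} $\theta^1(P^1)$. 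This forces $X$ (in the stable category) to have the explicit shape ${}^{\sigma^{-1}}(P)\to X^1\to\cdots\to X^{n-k}\to P\stackrel{\id}{\to}\cdots\stackrel{\id}{\to}P$, from which an inverse $\Phi$ to $\inc$ is read off directly. In particular Lemma~\ref{GP under syzygy} is not needed here, and the Gorenstein-projectivity bookkeeping you flag as the main obstacle does not arise: the preimage analysis already delivers an object with projective $(n{-}1)$-st component. The paper's argument is shorter and more elementary for this step; your approach would also work but carries more overhead than necessary. Conversely, the paper leaves the construction of $\inc_\rho$, $\inc_{\Lambda_{n-1}}$, the verification of the recollement axioms, and the passage to the $\G^0\F$ and $\MF$ subcategories entirely implicit, so your explicit treatment of those points fills real gaps in the exposition.
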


\begin{proof}
For the first recollement, it suffices to show 
 $$\Ker({\pr}_{k+1}^0\cdots {\pr}_n^0)\simeq\underline{\G^0\F}_{n-k+1}(A;\omega).$$

Let $X$ be a non-zero object in $\Ker({\pr}_{k+1}^0\cdots {\pr}_n^0)$, then there exists a projective object in ${{\G^0\F}_{k}(A;\omega)}$
$$\xymatrix{
Q:=Q^0\ar[r]^{\quad d_Q^0}& Q_1\ar[r]^{d_Q^1}&\cdots\ar[r]& Q^{k-1}\ar[r]^{d_Q^{k-1}}& {^{\sigma}}(Q^{0})
}$$
such that in ${{\G^0\F}_{k}(A;\omega)}$
 $$Q\oplus {\pr}_{k+1}^0\cdots {\pr}_n^0(X)\simeq\theta^0(P^0)\oplus \theta^1(P^1)\oplus \cdots\oplus \theta^{k-1}(P^{k-1})$$
for some projective $A$-modules $P^0, P^1$, $\cdots,$ $P^{k-1}$. 

Define an object in ${{\G^0\F}_{n}(A;\omega)}$ as
$$\xymatrix{
Q':=Q^0\ar[r]^{\quad\id}&  Q^0\ar[r]^{\id}&\cdots\ar[r]^{\id}& Q^0\ar[r]^{d_Q^0}& Q^1\ar[r]^{d_Q^1}& \cdots\ar[r]^{d_Q^{k-2}}& Q^{k-1}\ar[r]^{d_Q^{k-1}}& {^{\sigma}}(Q^{0})
}$$
Then in ${{\G^0\F}_{n}(A;\omega)}$
$${\pr}_{k+1}^0\cdots {\pr}_n^0(X\oplus Q')\simeq \theta^0(P^0)\oplus \theta^1(P^1)\oplus \cdots\oplus \theta^{k-1}(P^{k-1}).$$
By easy checking, all $\theta^i(P^i)$ have only projective preimages except $\theta^1(P^1)$. So, in the stable category, $X$ must be of the following form 
 $$(*)\quad
 {\xymatrix{
 {^{\sigma^{-1}}(P)}\ar[r]^{\quad d_X^0}& X^1\ar[r]^{d_X^1}&\cdots\ar[r]^{d_X^{n-k-1}}& X^{n-k}\ar[r]^{\quad d_X^{n-k}}& P\ar[r]^{\id\quad}& \cdots \ar[r]^{\id}& P.
 }}$$
Hence we get a functor $\Phi:\Ker({\pr}_{k+1}^0\cdots {\pr}_n^0)\longrightarrow \underline{\G^0\F}_{n-k+1}(A;\omega)$ which takes the module factorization 
$(*)$ to 
$$\xymatrix{
X^1\ar[r]^{d_X^1}&\cdots\ar[r]^{d_X^{n-k-1}}& X^{n-k}\ar[r]^{\quad d_X^{n-k}}& P\ar[r]^{^\sigma(d_X^{0})\quad}& {^\sigma{(X^1)}}.
}$$
It is routine to check that $\inc=S^{-1}\theta^{n-2}_{n-1}\cdots \theta^{n-k+1}_{n-k+2}\theta^{n-k}_{n-k+1}$ is a quasi-inverse of the functor $\Phi$.
\end{proof}

Setting $n=2$ and $k=1$, Theorem~\ref{thm:recollement} implies \cite[Proposition 6.3]{Chen24}. The assertions in the previous theorem can be reformulated in alternative ways, where the intermediate recollement directly follows from \cite[Corollary 4.4]{LZHZ20}.

\begin{cor}\label{cor:recollement}
    There is the following recollement for $1\leq k<n$
    
$$\xymatrix{
\Lambda_{n-k}\underline{\GProj}\ar[rr]_{\quad}^{\quad}
&
&
\Gamma_n\underline{\GProj}\ar[rr]^{~~~~}\ar@/^1.2pc/[ll]\ar@/_1.3pc/[ll]_{\quad}^{\quad}
&
&\Gamma_k\underline{\GProj}\ar@/^1.2pc/[ll]_{\quad}^{\quad}\ar@/_1.3pc/[ll]_{\quad}^{\quad}
}$$
which can restrict to recollements

$$\xymatrix{
\Lambda_{n-k}\underline{\GProj}\ar[rr]
&
&
\Lambda_{n-1}\underline{\GProj}\ar[rr]\ar@/^1.2pc/[ll]\ar@/_1.3pc/[ll]_{\quad}^{\quad}
&
&\Lambda_{k-1}\underline{\GProj}\ar@/^1.2pc/[ll]_{\quad}^{\quad}\ar@/_1.3pc/[ll]_{\quad}^{\quad}
}$$
and

$$\xymatrix{
\Lambda_{n-k}\underline{\Gproj}^{<+\infty}\ar[rr]_{\quad}^{\quad}
&
&
\Lambda_{n-1}\underline{\Gproj}^{<+\infty}\ar[rr]_{\quad}^{\quad}\ar@/^1.2pc/[ll]\ar@/_1.3pc/[ll]_{\quad}^{\quad}
&
&\Lambda_{k-1}\underline{\Gproj}^{<+\infty}.\ar@/^1.2pc/[ll]_{\quad}^{\quad}\ar@/_1.3pc/[ll]
}$$
\end{cor}

\vspace{2em}
{\bf Acknowledgments:} Yaohua Zhang is supported by the National Natural Science Foundation of China (No. 12401044).

\vspace{2em}
{\bf Declaration of Interest:} All authors disclosed no relevant relationships.

\end{document}